\documentclass[11pt, a4paper]{amsart}

\usepackage{graphicx}
\usepackage[utf8]{inputenc}
\usepackage{amsmath}
\usepackage{amssymb}
\usepackage{hyperref}
\usepackage{amsthm}
\usepackage{enumerate}
\usepackage{wasysym}

\usepackage{extarrows}
\usepackage{polynom}
\usepackage{dsfont}
\usepackage{verbatim}
\usepackage[shortlabels]{enumitem}
\usepackage[toc,page]{appendix}
\usepackage{a4wide}
\usepackage{todonotes}

\usepackage{pst-node}

\usepackage[pagewise,mathlines]{lineno}

\theoremstyle{definition}
\newtheorem{theorem}{Theorem}[section]
\newtheorem{prop}[theorem]{Proposition}
\newtheorem{definition}[theorem]{Definition}

\newtheorem{remark}[theorem]{Remark}
\newtheorem{lemma}[theorem]{Lemma}
\newtheorem{coro}[theorem]{Corollary}
\numberwithin{equation}{section}

\newcommand{\abs}[1]{\left\lvert#1\right\rvert}
\newcommand{\norm}[1]{\left\|#1\right\|}

\newcommand{\R}{\mathbb R}

\newcommand{\Z}{\mathbb Z}
\newcommand{\C}{\mathbb C}
\renewcommand{\epsilon}{\varepsilon}
\renewcommand{\S}{\mathcal S}

\newcommand{\dx}{\mathrm d}

\newcommand{\tr}{\mathrm{Tr}}
\newcommand{\K}{\mathcal K}
\newcommand{\T}{\mathcal T}

\newcommand{\F}{\mathcal F}

\setlength\parindent{0pt}

\begin{document}

\allowdisplaybreaks
\title{Kinetic Maximal $L^2$-Regularity for the (fractional) Kolmogorov equation}

\date{\today}
\author{Lukas Niebel}
\email{lukas.niebel@uni-ulm.de}
\author{Rico Zacher$^*$}
\thanks{$^*$Corresponding author. The first author is supported by a graduate
scholarship (''Landesgraduiertenstipendium'') granted by the State of Baden-Wuerttemberg, Germany 
(grant number 1902 LGFG-E).}
\email[Corresponding author]{rico.zacher@uni-ulm.de}
\address[Lukas Niebel, Rico Zacher]{Institut f\"ur Angewandte Analysis, Universit\"at Ulm, Helmholtzstra\ss{}e 18, 89081 Ulm, Germany.}

\maketitle
{\centering\footnotesize Dedicated to Matthias Hieber on the occasion of his 60th birthday \par}

\begin{abstract}
We introduce the notion of kinetic maximal $L^2$-regularity with temporal weights for the (fractional) Kolmogorov equation. In particular, we determine the function spaces for the inhomogeneity and the initial value which characterize the regularity of solutions to the fractional Kolmogorov equation in terms of fractional anisotropic Sobolev spaces. It is shown that solutions of the homogeneous (fractional) Kolmogorov equation define a semi-flow in a suitable function space and the property of instantaneous regularization is investigated. 
\end{abstract}
\vspace{1em}

{\centering \textbf{AMS subject classification.} 35K65, 35B65, 35H10 \par}
\vspace{1em}
\textbf{Keywords.} kinetic maximal regularity, Kolmogorov equation, anisotropic Sobolev spaces, optimal $L^2$-estimates, temporal weights, instantaneous smoothing

\section{Introduction} 
 
 The (fractional) Kolmogorov equation 
 	\begin{equation} \label{eq:intkol}
	\begin{cases}
		\partial_t u +v \cdot \nabla_x u + (-\Delta_v)^{\frac{\beta}{2}} u = f, \quad t>0 \\
		u(0) = g,
	\end{cases}
\end{equation}
 where $u = u(t,x,v)$ and $\beta \in (0,2]$ has gained more and more interest in the past years. This is mainly due to the following three phenomena. First, it can be seen as the prototype of a kinetic partial differential equation similar to the famous Boltzmann or the Landau equation. Second, even though the (fractional) Laplacian only acts in half of the variables, i.e.\ the equation is {\em degenerate}, solutions of this equation admit good regularity properties. Last, but not least, it serves as an excellent example to study the {\em regularity transfer} (from the $v$ to the $x$ variable), a special feature of kinetic equations.
 
If $\beta = 2$, the Kolmogorov equation models the density of moving particles under the assumption that the velocity is given by a Wiener process. The variable $x$ describes the position and the variable $v$ describes the velocity. The Kolmogorov equation has already been studied in \cite{kolmogoroff_zufallige_1934}, where Kolmogorov gave a fundamental solution in the case $\beta = 2$. Based on this work, H\"ormander investigated regularity properties of a much more general type of equations, namely those that satisfy the H\"ormander rank condition. He proved that every partial differential equation of this type is {\em hypoelliptic}, i.e.\ every distributional solution $u$ of such a PDE with smooth inhomogeneity $f$ must be smooth, too. First, local $L^p$ estimates for related equations have been studied in \cite{rothschild_hypoelliptic_1976}. Later, the tools for global estimates were provided in \cite{folland_estimates_1974}. In particular, it can be shown that for all $f \in L^2(\R^{2n+1})$ and any solution $u$ of the corresponding Kolmogorov equation for $\beta = 2$ the estimate
 \begin{equation} \label{eq:maxkinreg}
 	\norm{\partial_t u+ v \cdot \nabla_x u}_2 +\norm{\Delta_v u}_2  \lesssim \norm{f}_2
 \end{equation} 
 is satisfied. 
 
It is natural to ask whether one also gains some regularity in the position variable $x$. This question has been studied in the theory of kinetic equations with the help of velocity averages around the turn of the century. However, already in \cite{rothschild_hypoelliptic_1976} it was shown that in the case $\beta=2$ one gains $2/3$ of a derivative but only in the sense of a local $L^2$-estimate. In \cite{bouchut_hypoelliptic_2002} it is proven that one gains $2/3$ of a derivative in the position variable in terms of a global estimate, too. Moreover, this result holds true for every kinetic equation, for which it is known that the solution admits two derivatives in velocity $v$. This result nicely illustrates the regularity transfer phenomena observed for kinetic equations. In particular, these results apply to the fractional Kolmogorov equation, too. Here, one gains $\frac{\beta}{\beta+1}$ of a derivative in $x$. A global $L^2$-estimate for the fractional Kolmogorov equation similar to that in \eqref{eq:maxkinreg} was proven first in \cite{alexandre_fractional_2012}. A different proof, using a more stochastic language, can be found in \cite{chen_lp-maximal_2018}. Another proof in the case $\beta = 2$ can be also found in \cite{bouchut_hypoelliptic_2002}. Other important references, concerning the Kolmogorov equation, we want to mention are \cite{bramanti_invitation_2014,bramanti_global_2010,bramanti_global_2013,farkas_classofhypo_2008,lerner_hypoelliptic_2010,lanconelli_class_1994,
lorenzi_analytical_2017,lunardi_schauder_2005,bezard_regularite_1994}. Weak solutions of the Kolmogorov equation are considered in \cite{carrillo_global_1998,armstrong_variational_2019,golse_harnack_2016}. 
 
 The (abstract) theory of maximal $L^p$-regularity is closely connected to the $L^p$-theory of partial differential equations, see e.g.\
\cite{DHP}. To be more precise, let us consider the Cauchy problem 
 \begin{equation*}
 	 \partial_t u = Au+f,\quad u(0) = u_0,
 \end{equation*}
 where $A$ is a suitable linear operator, $f$ is the inhomogeneity in the space $L^p((0,T);X)$, $X$ is some Banach space and $u_0 \in X$ is the initial value. One is interested in a characterization of the data $f$ and $u_0$ in terms of function spaces such that $\partial_t u, Au \in L^p((0,T);X)$ holds. The choice of the space $X$ depends on the specific type of solution one is looking for. Let us sketch this for the heat equation
 \begin{equation*}
 	\begin{cases}
 		\partial_t u(t,x) = \Delta u(t,x), \quad t\in (0,T),\,x \in \R^n \\
 		u(0,x) = u_0(x), \quad x \in \R^n.
 		\end{cases}
\end{equation*}
  Choosing $X = L^2(\R^{n})$ we have that $\partial_t u$ and  $\Delta u \in L^2((0,T); L^2(\R^{n}))$ if and only if $f \in L^2((0,T); L^2(\R^{n}))$ and $u_0 \in H^1(\R^n)$, i.e.\ we are able to characterize assumptions such that the function $u$ is a strong solution of the equation. Moreover, one can show that in this case $u \in C([0,T];H^1(\R^n))$. The choice $X = H^{-1}(\R^n)$ leads to a characterization of weak solutions. We have $\partial_t u \in L^2((0,T);H^{-1}(\R^n))$ and $u \in L^2((0,T);H^1(\R^n))$ if and only if $f \in L^2((0,T);H^{-1}(\R^n))$ and $u_0 \in L^2(\R^n)$. If this is the case, here, one can also prove that $u \in C([0,T];L^2(\R^n))$. We point out that the theory of maximal $L^p$-regularity has been proven to be a very powerful tool to study nonlinear, more precisely, quasilinear variants of the equation $\partial_tu = Au$ such as $\partial_t u = A(u) u$, see e.g.\ the monograph
  \cite{pruss_moving_2016}. For an introduction to maximal $L^p$-regularity we refer to  \cite{amann_linear_1995,KunstWeis2004,pruss_moving_2016}.     
 
  It seems that a precise $L^p$-theory for the initial-value problem of the Kolmogorov equation in $(0,T) \times \R^{2n}$ has not yet been established, even in the special case $p=2$. Unfortunately, the Kolmogorov equation as well as other related kinetic equations do {\em not} enjoy the property of maximal $L^p$-regularity in the classical sense. This is indicated by estimate \eqref{eq:maxkinreg}, which does not provide a control for the time derivative itself. To remedy the lack of maximal $L^p$-regularity for the (fractional) Kolmogorov equation and restricting ourselves to the more accessible case $p=2$, we introduce the concept of {\em kinetic maximal $L^2$-regularity} in this article. To do so, we need to first determine a suitable space to measure the regularity of the solution $u$. The estimate \eqref{eq:maxkinreg} (in the case $\beta=2$) suggests that one should consider functions $u \in L^2((0,T);X)$ satisfying $\partial_t u + v \cdot \nabla_x u \in L^2((0,T);X)$ and $\Delta_v u \in L^2((0,T);X)$, where $X$ is an appropriate base space w.r.t.\ the spatial variables. 
It turns out that this is indeed a good choice. For example, as a very special case of our main result, Theorem \ref{thm:maxreg}, we are able to obtain the following result.
   
   \begin{theorem} \label{thm:thmint}
   		Let $T>0$ and $\beta \in (0,2]$. The Kolmogorov equation possesses a unique solution $u \in L^2((0,T);L^2(\R^{2n}))$ satisfying 
   		$$\partial_t u + v \cdot \nabla_x u \in L^2((0,T);L^2(\R^{2n})), \; (-\Delta_v)^{\frac{\beta}{2}} u \in L^2((0,T);L^2(\R^{2n})) $$
   		and $u \in C([0,T];H_x^{\frac{\beta/2}{\beta+1}}(\R^{2n}) \cap H_v^{\beta/2}(\R^{2n})$ if and only if 
 \[  		
   		f \in L^2((0,T);L^2(\R^{2n}))\quad \mbox{and}\quad u_0 \in H_x^{\frac{\beta/2}{\beta+1}}(\R^{2n}) \cap H_v^{\beta/2}(\R^{2n}).
  \]
   \end{theorem}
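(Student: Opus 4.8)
The plan is to pass to the Fourier side in the spatial variables $(x,v)$, where the degenerate second-order structure collapses to a single first-order transport ODE that can be integrated explicitly. Writing $\xi$ for the variable dual to $x$ and $\eta$ for the variable dual to $v$, the symbols are $v\cdot\nabla_x\mapsto-\xi\cdot\nabla_\eta$ and $(-\Delta_v)^{\beta/2}\mapsto|\eta|^\beta$, so that \eqref{eq:intkol} turns into
\[
\partial_t\hat u-\xi\cdot\nabla_\eta\hat u+|\eta|^\beta\hat u=\hat f,\qquad\hat u(0)=\hat g.
\]
For each fixed $\xi$ this is a linear transport equation in $\eta$ with dissipative zeroth-order term $|\eta|^\beta$, which I would solve along the characteristics $s\mapsto\eta+(t-s)\xi$. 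The resulting Duhamel representation
\[
\hat u(t,\xi,\eta)=e^{-\int_0^t|\eta+r\xi|^\beta\,\dx r}\,\hat g(\xi,\eta+t\xi)+\int_0^t e^{-\int_0^{t-\tau}|\eta+r\xi|^\beta\,\dx r}\,\hat f(\tau,\xi,\eta+(t-\tau)\xi)\,\dx\tau
\]
is an exact solution formula, and since the ODE along each characteristic has a unique solution, uniqueness in $L^2((0,T);L^2(\R^{2n}))$ is settled at the same time.

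Granting this representation, the direction ``$\Leftarrow$'' reduces to a handful of Fourier-multiplier estimates. The velocity regularity $\|(-\Delta_v)^{\beta/2}u\|_{L^2((0,T)\times\R^{2n})}\lesssim\|f\|_2+\|g\|_{\mathrm{tr}}$ is obtained by inserting the formula and estimating the operator with kernel $e^{-\int_0^{t-\tau}|\eta+r\xi|^\beta\dx r}$ via Plancherel together with a Young-type inequality in $(\tau,t)$; once this is in hand the equation itself yields $\partial_t u+v\cdot\nabla_x u=f-(-\Delta_v)^{\beta/2}u\in L^2$ for free. The continuity $u\in C([0,T];H_x^{\frac{\beta/2}{\beta+1}}\cap H_v^{\beta/2})$ follows from showing that $t\mapsto\hat u(t,\cdot)$ is continuous in the weighted norm with density $1+|\xi|^{\frac{\beta}{\beta+1}}+|\eta|^\beta$ and that its value at $t=0$ is $\hat g$. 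The converse ``$\Rightarrow$'' is then immediate: continuity at $t=0$ forces $g=u(0)\in H_x^{\frac{\beta/2}{\beta+1}}\cap H_v^{\beta/2}$, while rearranging the equation gives $f\in L^2$.

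I expect the genuine difficulty to be the gain of regularity in the position variable, that is, the appearance of the exponent $\tfrac{\beta/2}{\beta+1}$ out of an equation that only dissipates in $v$. The mechanism is the anisotropic Kolmogorov scaling $(t,x,v)\mapsto(\lambda^\beta t,\lambda^{\beta+1}x,\lambda v)$, under which one velocity derivative is worth $\tfrac{1}{\beta+1}$ position derivatives; concretely it is encoded in the Schur-type bound
\[
\sup_{\xi,\eta\in\R^n}\ |\xi|^{\frac{\beta}{\beta+1}}\int_0^\infty e^{-2\int_0^s|\eta+r\xi|^\beta\,\dx r}\,\dx s\ <\ \infty,
\]
whose scale invariance is precisely the homogeneity above and which, for $\eta=0$, reduces after the substitution $s=|\xi|^{-\beta/(\beta+1)}\sigma$ to a fixed $\Gamma$-type integral. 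Feeding this bound (and its companion carrying the factor $|\eta|^{2\beta}$ for the velocity regularity) into the representation formula is what converts the pure velocity dissipation into integrable decay in the position frequency $|\xi|$, and simultaneously identifies the trace space as $H_x^{\frac{\beta/2}{\beta+1}}\cap H_v^{\beta/2}$, i.e.\ the real-interpolation space $(L^2,\mathcal D)_{1/2,2}$ between $L^2$ and the kinetic domain $\mathcal D$ dictated by the scaling.

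Finally, rather than carrying out these computations in isolation, I would obtain the statement as the special case $X=L^2(\R^{2n})$ with trivial temporal weight of the main Theorem~\ref{thm:maxreg}, after checking that the general trace space appearing there collapses to $H_x^{\frac{\beta/2}{\beta+1}}\cap H_v^{\beta/2}$ for this choice of base space and weight.
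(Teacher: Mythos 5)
Your outline is, in substance, the paper's own route: your Duhamel/characteristics formula is equation \eqref{eq:fouriersol}, your Schur-type bound is (in the paper's notation, where $k$ is dual to $x$ and $\xi$ is dual to $v$) precisely the family of bounds $\abs{k}^{\frac{\beta}{\beta+1}}\int_0^\infty e_\beta(\tau,k,\xi)\,\dx\tau\le c(\beta)$ underlying Proposition \ref{prop:inhomogeneity} and Lemma \ref{lem:esteta}, and the paper does obtain Theorem \ref{thm:thmint} exactly as you propose in your last paragraph, namely as the case $s=0$, $\mu=1$ of Theorem \ref{thm:maxreg}. However, two steps you declare settled are genuine gaps, and they are precisely where the paper has to work. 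The first is uniqueness. Saying that ``the ODE along each characteristic has a unique solution'' only shows that the formula \emph{produces} a solution; it does not show that every distributional solution in the class of the theorem \emph{equals} that formula. Before one can integrate along characteristics, one must know that $\hat u$ composed with the characteristic flow is absolutely continuous in $t$ for a.e.\ frequency, and this has to be extracted from the hypotheses $\partial_t u+v\cdot\nabla_x u\in L^2$ and $(-\Delta_v)^{\beta/2}u\in L^2$. The paper isolates this as Theorem \ref{thm:uniqueweak}: one checks that $\Gamma u$, with $[\Gamma u](t,x,v)=u(t,x+tv,v)$, lies in $H^1_\mu((0,T);H^{-2}(\R^{2n}))\cap L^2_\mu((0,T);L^2(\R^{2n}))$, solves the nonautonomous problem $\partial_t w=A_\beta(t)w$, $w(0)=0$, and vanishes by dissipativity of the multiplier family $A_\beta(t)$ in $H^{-2}$. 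Note also that ``uniqueness in $L^2((0,T);L^2(\R^{2n}))$'', as you phrase it, is not even a meaningful class: a function that is merely $L^2$ in time has no trace at $t=0$, so the initial condition cannot be imposed there.

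The second gap is the continuity $u\in C([0,T];H_x^{\frac{\beta/2}{\beta+1}}(\R^{2n})\cap H_v^{\beta/2}(\R^{2n}))$. Saying that it ``follows from showing that $t\mapsto\hat u(t,\cdot)$ is continuous in the weighted norm'' restates the claim rather than proving it, and this is where most of Section \ref{sec:maxkinL2} is spent: $\Gamma$ is an isomorphism from $\T_\mu$ onto $H^1_\mu$ (Proposition \ref{prop:TtoH1}), the embedding $H^1_\mu\hookrightarrow C$ together with strong continuity of the group $(\Gamma^{-1}(t))$ gives continuity with values in $L^2$ (Theorem \ref{thm:continL2}), a translation-semigroup argument upgrades this to continuity in the trace-space norm (Proposition \ref{prop:contabstract}), and identifying that trace space with $X_\beta^{1/2}\cong H_x^{\frac{\beta/2}{\beta+1}}(\R^{2n})\cap H_v^{\beta/2}(\R^{2n})$ requires not only the upper bound but also the lower bound of Lemma \ref{lem:esteta} (Proposition \ref{prop:necessarycondiv}) plus an open-mapping argument (Theorem \ref{thm:chartraceL2}). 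Since you ultimately quote Theorem \ref{thm:maxreg}, you inherit all of this; if instead you wanted to verify continuity directly from the representation formula, that verification is itself a nontrivial piece of the proof, not a footnote. A smaller point of the same kind: checking your Schur bound only at vanishing velocity frequency and invoking scale invariance is insufficient, because scaling normalizes $\abs{k}=1$ but leaves the supremum over the full velocity frequency variable; the required uniformity is exactly the content of Lemma \ref{lem:upperbounde}, whose proof needs the case analysis comparing $t$ with $\abs{\xi}/\abs{k}$.
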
 
   
   Here, $H_x^{\frac{\beta/2}{\beta+1}}(\R^{2n}) $ and $H_v^{\beta/2}(\R^{2n})$ denote fractional Sobolev spaces in the respective variables $x$ and $v$. A precise definition can be found in the next section. To prove the $L^2$-estimates, we use Plancherel's theorem and make excessive use of the Fourier transformation. The continuity of solutions with values in $L^2$ can be deduced by considering the characteristics, i.e. $(t,x,v) \mapsto (t,x+tv,v)$, corresponding to the kinetic first order term $\partial_t + v \cdot \nabla_x$. 
   
Theorem \ref{thm:thmint} characterizes strong $L^2$-solutions. In our main result, Theorem \ref{thm:maxreg}, we significantly extend this theorem in two different directions. First, instead of $X=L^2(\R^{2n})$, we consider a whole {\em scale of anisotropic fractional Sobolev spaces}, which also allows to study both weak solutions and strong solutions with higher regularity. Second, we introduce {\em temporal weights} of the form $t^{2(1-\mu)}$ which enable us to lower the initial value regularity and to prove results on the instantaneous gain of regularity for the homogeneous Kolmogorov equation. In particular, we are able to show that in our general framework, any solution becomes $C^\infty$-smooth in time and space for $t>0$, see Theorem \ref{TheoremSmoothing}.
   
    Due to the linearity of the problem we are able to study the nonhomogeneous problem with vanishing initial data and the initial-value problem with $f=0$ separately. We remark that if $u_0  = 0$ the statement of Theorem \ref{thm:thmint} is a consequence of the well-known results on global $L^2$-estimates, see e.g.\ \cite{chen_lp-maximal_2018}, which also covers the case $p\neq 2$. Restricting to $p=2$, our argument generalizes to the one given in
\cite{chen_lp-maximal_2018} inasmuch as we allow for temporal weights of the form $t^{2(1-\mu)}$ with $\mu \in (1/2,1]$ and for different base spaces $X$ other than $L^2$. The result for the initial value seems to be new, even in the 
special case of Theorem \ref{thm:thmint} with $\beta=2$. We point out that we do not only identify the trace
space for the initial value but also prove continuity of the solutions in the trace space. This is a crucial property necessary for the homogeneous equation to induce a {\em semi-flow}.   
   
We note that the results of this paper, at least in case of strong solutions, can be extended to $L^p$-solutions of the Kolmogorov equation. This is work in progress and will be subject of the forthcoming article \cite{niebel_kinetic_nodate-1}. The $L^p$-setting is much more involved as Plancherel's theorem cannot be used
anymore. Instead one needs to use more sophisticated tools such as Littlewood-Paley decompositions and work also 
with anisotropic Besov spaces.  
	
	The plan of the present article is as follows. We first fix some notation and prove some preliminary results in Section \ref{sec:prelim}. Furthermore, we introduce the representation formula for solutions of the Kolmogorov equation in this section. Then, we will prove $L^2$-estimates for the nonhomogeneous Kolmogorov equation with vanishing initial data and the homogeneous Kolmogorov equation in Section \ref{sec:inhomo} and \ref{sec:homo}, respectively. In Section \ref{sec:maxkinL2}, we then introduce the concept of kinetic maximal $L^2$-regularity with temporal weights and prove that the fractional Kolmogorov equation satisfies this property. Here, we also provide a deeper study of the involved function spaces and justify why solutions of the homogeneous Kolmogorov equation define a semi-flow. Finally, in Section \ref{sec:regCinf}, we will investigate the gain of regularity for the homogeneous Kolmogorov equation and prove the instantaneous $C^\infty$-regularization of any solution to the Kolmogorov equation.

\section{Preliminaries}
\label{sec:prelim}
We are interested in the regularity of measurable functions $u \colon [0,\infty) \times \R^{2n} \to \R$, $u = u(t,x,v)$ which are solutions (at least in the distributional sense) of the (fractional) Kolmogorov equation
\begin{equation} \label{eq:kol}
	\begin{cases}
		\partial_t u +v \cdot \nabla_x u = -(-\Delta_v)^{\frac{\beta}{2}} u +f, \quad t>0 \\
		u(0) = g,
	\end{cases}
\end{equation}
where $\beta \in (0,2]$ and the data $f$ and $g$ are given. The Fourier transform in $(x,v)$ with respective Fourier variables $(k,\xi)$ of a function $u$ will be denoted by $\hat{u}$. At least formally applying the Fourier transform to equation \eqref{eq:kol} gives
\begin{equation*}
	\begin{cases}
		\partial_t \hat{u} - k  \cdot \nabla_\xi \hat{u} = -\abs{\xi}^\beta \hat{u} + \hat{f}, \quad t>0 \\
		\hat{u}(0) = \hat{g}.
	\end{cases}
\end{equation*}
In the following we will use the function $e_\beta \colon [0,\infty) \times \R^{2n} \to \R$ given by
\begin{equation*}
	e_\beta(t,k,\xi) = \exp( -\int_0^t \abs{\xi+(t-r) k}^\beta \dx r) = \exp( -\int_0^t \abs{\xi+\sigma k}^\beta \dx \sigma)
\end{equation*}
for $\beta \in (0,2]$. A direct calculation shows that if $\beta = 2$ the integral simplifies to 
\begin{equation*}
	e_2(t,k,\xi) = \exp\left(-\abs{\xi}^2t- \xi \cdot k t^2 - \abs{k}^2\frac{t^3}{3}\right).
\end{equation*}
A solution to the Fourier transformed Kolmogorov equation can be explicitly given by means of the method of characteristics as
\begin{equation} \label{eq:fouriersol}
	\hat{u}(t,k,\xi) = \hat{g}(k,\xi + tk) e_\beta(t,k,\xi) + \int_0^t \hat{f}(s,k,\xi+(t-s)k) e_\beta(t-s,k,\xi) \dx s,
\end{equation}
for sufficiently nice functions $f$ and $g$. It is well-known that there exists a strongly continuous semigroup in $L^2(\R^{2n})$ associated to the fractional Kolmogorov equation, which we will denote by $T(t)$. A thorough treatment of the fractional Kolmogorov semigroup in $L^2(\R^{2n})$ can be found in \cite{alphonse_smoothing_2018}.

\noindent We are going to study the connection between moment bounds of the Fourier transformed solutions $\hat{u}$ and moment bounds on the Fourier transformed initial value $\hat{g}$ as well as the inhomogeneity $\hat{f}$. Moment bounds in $\xi$ and $k$ of the Fourier transformed functions  $\hat{u}$, $\hat{f}$ and $ \hat{g}$ correspond to differentiability properties of the functions $u$, ${f}$ and $g$. Let us introduce the following (kinetic) fractional Sobolev spaces. For $s \in \R$ we define one space to measure the regularity in the position variable $x$
\begin{equation*}
	H_x^s(\R^{2n}) = \left\{ f \in \S'(\R^{2n}) \; \colon \; (1+\abs{k}^{2})^\frac{s}{2} \F(f)(k,\xi) \in L^2(\R^{2n}),   \right\}
\end{equation*}
and one for the regularity in the velocity variable $v$ 
\begin{equation*}
	H_v^s(\R^{2n}) = \left\{ f \in \S'(\R^{2n}) \; \colon \; (1+\abs{\xi}^{2})^\frac{s}{2} \F(f)(k,\xi) \in L^2(\R^{2n})   \right\}.
\end{equation*}
Both of these spaces are Hilbert spaces when equipped with the obvious scalar product. In the following we denote by $D_x^s = (-\Delta_x)^\frac{s}{2}$ and $D_v^s = (-\Delta_v)^\frac{s}{2}$ the fractional Laplacian in the variables $x$ and $v$, respectively. By $\dot{H}_x^s(\R^{2n})$ and $\dot{H}_v^s(\R^{2n})$ we denote the corresponding homogeneous Sobolev spaces, which are defined as the closure of Schwartz functions modulo polynomials with respect to the $\norm{D_x^s \cdot}_{\dot{L}^2}$ and $\norm{D_v^s \cdot}_{\dot{L}^2}$ norm respectively. Here, $\dot{L}^2$ denotes the $L^2$-space modulo polynomials, see \cite[Section 2.4]{sawano_theory_2018}. Moreover, we introduce the following scale of anisotropic Sobolev spaces
\begin{equation*}
	X^s_\beta =  \left\{ f \in \S'(\R^{2n}) \; \colon \; \left(  (1+\abs{\xi}^{2})^\frac{\beta}{2}+(1+\abs{k}^2)^\frac{\beta}{2(\beta+1)} \right)^{s} \F(f)(k,\xi) \in L^2(\R^{2n})   \right\}
\end{equation*}
equipped with the respective norm $\norm{\cdot}_{X^s_\beta}$. Again, we denote by $\dot{X}^s_\beta$ the corresponding homogeneous space. Instead of the smooth multiplier used in the definition of the spaces $X^s_\beta$
we can equivalently use powers of the multiplier
$\omega_\beta \colon \R^{2n} \to [0,\infty)$, defined as
\begin{equation*}
	\omega_\beta(k,\xi) = 1+\abs{k}^\frac{\beta}{\beta+1}+\abs{\xi}^\beta
\end{equation*}
for $k,\xi \in \R^n$, to characterize the norm of ${X}^s_\beta$, by Plancherel's theorem. This multiplier will play an important role throughout the article. We note that if $s \ge 0$, then $H_x^{s\frac{\beta}{\beta+1}}(\R^{2n}) \cap H_{v}^{s\beta}(\R^{2n}) \cong X^s_\beta $.   

 Given any Banach space $X$, $T \in (0,\infty]$ and $\mu \in (-\infty,1]$ we define the time-weighted $L_\mu^2$ space as
\begin{equation*}
	L_\mu^2((0,T);X) = \{ f \colon (0,T) \to X \colon f \text{ measurable and } \int_0^T t^{2-2\mu} \norm{f(t)}_X^2 \dx t < \infty \}.
\end{equation*}
Equipped with the norm $\norm{f}_{2,\mu,X}^2 = \int_0^T t^{2-2\mu} \norm{f(t)}_X^2 \dx t$, the vector space $L_\mu^2((0,T);X)$ is a Banach space.

In the following calculations we denote by the letter $c$ a generic positive constant which may change from line to line. For two functions $f,g$, the notation $f\lesssim g$ then means that $f\le cg$ on the respective domain. Note that
in our arguments, estimates are always proven first for smooth functions, the general case then follows by an approximation argument.

The following estimate on $e_\beta$ will be a useful tool in later proofs.  
\begin{lemma} \label{lem:upperbounde}
	For all $\beta \in (0,2]$ there exist constants $c_1 =c_1(\beta), {c}_2 = {c}_2(\beta)>0$ such that we have
	\begin{equation*}
		\exp(-c_1\abs{\xi+tk}^{\beta}t- c_1\abs{k}^{\beta}t^{\beta+1}) \le e_\beta(t,k,\xi) \le \exp(-c_2\abs{\xi+tk}^\beta t- c_2\abs{k}^\beta t^{\beta+1})
	\end{equation*}
	for all $t \ge 0$ and all $\xi,k \in \R^n$.
\end{lemma}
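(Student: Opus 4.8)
The plan is to pass to logarithms and reduce the claimed two-sided exponential estimate to a comparison of the exponent with the target exponent. Since by definition $e_\beta(t,k,\xi)=\exp\big(-\int_0^t\abs{\xi+\sigma k}^\beta\,\dx\sigma\big)$ and all exponents are nonpositive, the assertion is equivalent to
\begin{equation*}
\int_0^t\abs{\xi+\sigma k}^\beta\,\dx\sigma\;\asymp\;\abs{\xi+tk}^\beta\,t+\abs{k}^\beta\,t^{\beta+1},
\end{equation*}
with constants depending only on $\beta$; the upper comparison (an upper bound on the integral) yields $c_1$ and the lower comparison yields $c_2$. The case $t=0$ is trivial, so assume $t>0$.

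First I would scale out the time variable. Substituting $\sigma=ts$ and setting $w=\xi+tk$ and $z=tk$, a direct computation gives $\xi+\sigma k=w-(1-s)z$, so that after reflecting $s\mapsto 1-s$,
\begin{equation*}
\int_0^t\abs{\xi+\sigma k}^\beta\,\dx\sigma = t\int_0^1\abs{w-sz}^\beta\,\dx s =: t\,\Phi(w,z),
\end{equation*}
while the target expression equals $t\big(\abs{w}^\beta+\abs{z}^\beta\big)$. Thus the whole lemma reduces to the scale-free two-sided bound $\Phi(w,z)\asymp\abs{w}^\beta+\abs{z}^\beta$, uniformly in $w,z\in\R^n$. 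The upper bound is elementary: the triangle inequality gives $\abs{w-sz}\le\abs{w}+s\abs{z}$, and combined with $(a+b)^\beta\le C_\beta(a^\beta+b^\beta)$ for $a,b\ge 0$ (valid with $C_\beta=\max\{1,2^{\beta-1}\}$ on $(0,2]$) one integrates in $s$ to obtain $\Phi(w,z)\le C_\beta(\abs{w}^\beta+\abs{z}^\beta)$.

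The heart of the matter is the lower bound, and here I would exploit that $\Phi$ is positively homogeneous of degree $\beta$, i.e.\ $\Phi(\lambda w,\lambda z)=\abs{\lambda}^\beta\Phi(w,z)$. It therefore suffices to bound $\Phi$ from below on the compact sphere $\{\abs{w}^2+\abs{z}^2=1\}$, on which $\Phi$ is continuous. The key observation is that $\Phi(w,z)=0$ forces $w-sz=0$ for all $s\in[0,1]$ (the integrand being continuous and nonnegative), which gives $w=z=0$; hence $\Phi$ is strictly positive on the sphere and attains a positive minimum $c>0$ there, yielding $\Phi(w,z)\ge c\,(\abs{w}^2+\abs{z}^2)^{\beta/2}\gtrsim\abs{w}^\beta+\abs{z}^\beta$ by homogeneity. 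Since $\abs{w-sz}^2=\abs{w}^2-2s\langle w,z\rangle+s^2\abs{z}^2$ depends on $(w,z)$ only through the two-dimensional Gram data, the resulting constant depends on $\beta$ but not on $n$. Undoing the substitution and exponentiating then produces the two constants $c_1=c_1(\beta)$, $c_2=c_2(\beta)$ as claimed. The main obstacle is precisely this lower bound: the integrand $\abs{w-sz}^\beta$ may be very small in the interior of $[0,1]$, so the endpoint contribution $\abs{w}^\beta$ cannot be recovered pointwise; the compactness-plus-positivity argument (or, if explicit constants are wanted, a split into the cases $\abs{z}\le 2\abs{w}$ and $\abs{z}>2\abs{w}$, the latter handled by the separate lower bound $\Phi\gtrsim\abs{z}^\beta$ obtained after reducing $w$ to the direction of $z$) is what circumvents this difficulty.
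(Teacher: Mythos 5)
Your proposal is correct, but its key step runs along a genuinely different route than the paper's. Both arguments start with the same reduction: take logarithms and shift to the variable $w=\xi+tk$ (the paper's substitution $\xi=\tilde{\xi}+tk$), so that the claim becomes a two-sided bound for $\int_0^t \abs{\xi-rk}^\beta \dx r$ against $\abs{\xi}^\beta t+\abs{k}^\beta t^{\beta+1}$. From there the paper works by hand: it treats $k=0$ separately and then splits into the regimes $t\le \abs{\xi}/(2\abs{k})$, $\abs{\xi}/(2\abs{k})\le t\le \abs{\xi}/\abs{k}$ and $t\ge \abs{\xi}/\abs{k}$, estimating the integral explicitly in each case via the inverse triangle inequality $\abs{\xi-rk}\ge \abs{\abs{\xi}-r\abs{k}}$ and elementary inequalities like $a^{\beta+1}\le 2^\beta((a-b)^{\beta+1}+b^{\beta+1})$; this yields explicit constants such as $2^{-\beta}/(\beta+1)$. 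You instead scale out time entirely ($\sigma=ts$), reducing the lemma to the scale-free statement $\Phi(w,z)=\int_0^1\abs{w-sz}^\beta\dx s \asymp \abs{w}^\beta+\abs{z}^\beta$, and obtain the crucial lower bound softly: $\Phi$ is homogeneous of degree $\beta$, continuous, and strictly positive off the origin, hence bounded below on the unit sphere by compactness. This is shorter and conceptually cleaner, and your remark that $\Phi$ depends only on the Gram data $(\abs{w},\abs{z},\langle w,z\rangle)$ correctly closes the one gap such an argument would otherwise have — the lemma requires $c_1,c_2$ to depend only on $\beta$, not on $n$, and a naive sphere-minimum in $\R^{2n}$ would a priori be dimension-dependent. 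What you give up relative to the paper is effectiveness of the constant $c_2$ (your minimum is non-constructive unless one falls back on the case split you sketch at the end); what you gain is that the computational case analysis collapses to a compactness observation. Your treatment of the easy direction (triangle inequality plus $(a+b)^\beta\le C_\beta(a^\beta+b^\beta)$) matches the paper's one-line remark for that bound.
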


\begin{proof}
	The case $\beta  = 2$ is much easier to prove. It can be directly shown by using the Cauchy-Schwarz and Young's inequality.	Let $\beta \in (0,2]$. Observe that, substituting $\xi = \tilde{\xi}+tk$, it suffices to estimate 
	\begin{equation*}
		\int_0^t \abs{\xi-rk}^\beta \dx r \ge c\abs{\xi}^\beta t + c \abs{k}^\beta t^{\beta+1}
	\end{equation*}
	for some constant $c>0$ to show the estimate from above. If $k = 0$ the estimate follows directly by calculating the integral on the left hand side.  If $t \le \frac{\abs{\xi}}{\abs{k}}$, then by the inverse triangle inequality we have
	\begin{equation*}
		\int_0^t \abs{\xi-rk}^\beta \dx r \ge \int_0^t (\abs{\xi}-r\abs{k})^\beta \dx r = \frac{\abs{\xi}^{\beta+1}}{(\beta+1) \abs{k}} - \frac{(\abs{\xi}-t\abs{k})^{\beta+1}}{(\beta+1) \abs{k}}.
	\end{equation*}
	If in addition $\frac{\abs{\xi}}{2\abs{k}} \le t \le  \frac{\abs{\xi}}{\abs{k}}$, then the inequality
	\begin{equation*}
		\int_0^t \abs{\xi-rk}^\beta \dx r \ge (1-\frac{1}{2^{\beta+1}})\frac{\abs{\xi}^{\beta+1}}{(\beta+1) \abs{k}} \ge c\abs{\xi}^\beta t
	\end{equation*}
	follows from the latter inequality. If $t \le \frac{\abs{\xi}}{2\abs{k}} $, then 
	\begin{equation*}
		\int_0^t \abs{\xi-rk}^\beta \dx r \ge \frac{1}{2^\beta} \int_0^t \abs{\xi}^\beta \dx r = c{\abs{\xi}^\beta}t. 
	\end{equation*}	
	Moreover, if $t \le \frac{\abs{\xi}}{\abs{k}}$, then $\abs{\xi}^\beta t \ge \abs{k}^\beta t^{\beta+1}$ and hence we have the desired lower estimate by $\abs{k}^\beta t^{\beta+1}$ in this case, too. 
	
	In the case that $t \ge \frac{\abs{\xi}}{\abs{k}}$ we estimate
	\begin{align*}
		\int_0^t \abs{\xi-rk}^\beta \dx r &= \int_{\frac{\abs{\xi}}{\abs{k}}}^t \abs{\xi-rk}^\beta  \dx r +\int_0^{\frac{\abs{\xi}}{\abs{k}}} \abs{\xi-rk}^\beta  \dx r \\
		&\ge \int_{\frac{\abs{\xi}}{\abs{k}}}^t (r \abs{k}-\abs{\xi})^\beta  \dx r +\int_0^{\frac{\abs{\xi}}{\abs{k}}} (\abs{\xi}-r \abs{k})^\beta   \dx r \\
		&= \frac{1}{(\beta+1)\abs{k}} \left[ (t\abs{k}-\abs{\xi})^{\beta+1}-0-0+\abs{\xi}^{\beta+1} \right] \\
		&\ge \frac{2^{-\beta}}{\beta+1} \abs{k}^\beta t^{\beta+1} = c\abs{k}^\beta t^{\beta+1},
	\end{align*}
	using the inequality $a^{\beta+1} \le (a-b+b)^{\beta+1} \le 2^\beta ((a-b)^{\beta+1}+b^{\beta+1})$.
	Again, as $\abs{k}^\beta t^{\beta+1} \ge \abs{\xi}^\beta t$ and arguing as above we conclude the desired estimate from above. The estimate from below follows  using the triangle inequality and a straightforward integration.
\end{proof}

\begin{lemma} \label{lem:decayestimsemi}
	Let $\beta \in (0,2]$, $s\ge -1$ and $T>0$. Then for all $k,\xi \in \R^n$ and any $t \in (0,T]$ we have	
	\begin{equation*}
		\omega_\beta(k,{\xi-tk})^{2(s+1)} e_\beta(t,k,\xi-tk) \le c(T) \frac{\omega_\beta(k,\xi)^{2s}}{t^2}.
	\end{equation*}
\end{lemma}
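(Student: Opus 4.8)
The plan is to reduce everything to the pointwise upper bound for $e_\beta$ from Lemma \ref{lem:upperbounde} and then to absorb all polynomial factors into the resulting Gaussian-type decay. First I would apply Lemma \ref{lem:upperbounde} with the velocity slot shifted by $-tk$: since the bound there reads $e_\beta(t,k,\eta)\le \exp(-c_2|\eta+tk|^\beta t - c_2|k|^\beta t^{\beta+1})$, the choice $\eta=\xi-tk$ collapses the first term to $|\xi|^\beta$ and gives the clean estimate
\begin{equation*}
 e_\beta(t,k,\xi-tk)\le \exp\bigl(-c_2|\xi|^\beta t - c_2|k|^\beta t^{\beta+1}\bigr).
\end{equation*}
It is convenient to abbreviate $A=|\xi|^\beta t$ and $B=|k|^\beta t^{\beta+1}$, so that the exponent is $-c_2(A+B)$. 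I record the two scaling identities $t^2|\xi|^{2\beta}=A^2$ and $t^2|k|^{2\beta/(\beta+1)}=B^{2/(\beta+1)}$, which are exactly what lets the prefactor $t^2$ cooperate with the decay.

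The crucial step is to compare $\omega_\beta(k,\xi-tk)$ with $\omega_\beta(k,\xi)$. Using $|\xi-tk|^\beta\le 2(|\xi|^\beta+t^\beta|k|^\beta)$ (valid for $\beta\in(0,2]$ by convexity/subadditivity), the only genuinely new term is $t^\beta|k|^\beta$, which is not part of $\omega_\beta(k,\xi)$. Here I would use the factorization
\begin{equation*}
 t^\beta|k|^\beta=\bigl(|k|^\beta t^{\beta+1}\bigr)^{\frac{\beta}{\beta+1}}|k|^{\frac{\beta}{\beta+1}}=B^{\frac{\beta}{\beta+1}}\,|k|^{\frac{\beta}{\beta+1}}\le B^{\frac{\beta}{\beta+1}}\,\omega_\beta(k,\xi),
\end{equation*}
which yields $\omega_\beta(k,\xi-tk)\lesssim \omega_\beta(k,\xi)\bigl(1+B^{\beta/(\beta+1)}\bigr)$. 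This is the heart of the argument: the shift $\xi\mapsto\xi-tk$ costs only a factor that is a power of $B$, and powers of $B$ are harmless against $e^{-c_2 B}$.

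Finally I would raise the last estimate to the power $2(s+1)\ge 0$, multiply by $t^2 e_\beta(t,k,\xi-tk)$, and split off $\omega_\beta(k,\xi)^{2s}$ via $\omega_\beta(k,\xi)^{2(s+1)}=\omega_\beta(k,\xi)^{2s}\,\omega_\beta(k,\xi)^2$, reducing the claim to the boundedness of
\begin{equation*}
 t^2\,\omega_\beta(k,\xi)^2\bigl(1+B^{\frac{\beta}{\beta+1}}\bigr)^{2(s+1)}\exp\bigl(-c_2 A-c_2 B\bigr)\le c(T).
\end{equation*}
Splitting the exponential, the factor $t^2\omega_\beta(k,\xi)^2\lesssim t^2+B^{2/(\beta+1)}+A^2$ is controlled by $e^{-c_2(A+B)/2}$ together with $t\le T$ and $\sup_{a\ge0}a^\gamma e^{-ca}<\infty$, while $\bigl(1+B^{\beta/(\beta+1)}\bigr)^{2(s+1)}$ is absorbed by the remaining $e^{-c_2 B/2}$. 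The main obstacle is precisely the cross term $t^\beta|k|^\beta$ produced by the shift: a naive triangle-inequality bound would leave a factor $(t^\beta|k|^\beta)^{2(s+1)}=B^{2(s+1)}t^{-2(s+1)}$ carrying a $t^{-2(s+1)}$ singularity that cannot be compensated when $s>-1$, and the factorization above is exactly what removes it. One should also keep in mind that $s$ may be as small as $-1$, so the borderline case $s=-1$ (where $t^2$ and the extra factor $\omega_\beta(k,\xi)^2$ are exactly used up by the decay) is the tightest, and the constant $c(T)$ is understood to depend also on the fixed parameters $\beta$ and $s$.
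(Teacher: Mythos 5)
Your proof is correct, and it takes a genuinely different --- and in fact more unified --- route than the paper's. The paper also starts from Lemma \ref{lem:upperbounde}, but it then bounds $\omega_\beta(k,\xi-tk)^{2(s+1)}$ \emph{additively} by $1+|\xi-tk|^{2(s+1)\beta}+|k|^{2(s+1)\beta/(\beta+1)}$ and runs a case distinction $t|k|\le|\xi|$ versus $t|k|\ge|\xi|$, comparing term by term with $\omega_\beta(k,\xi)^{2s}$; this direct comparison only works for $s\ge 0$ (where $\omega_\beta(k,\xi)^{2s}\approx 1+|\xi|^{2s\beta}+|k|^{2s\beta/(\beta+1)}$), and for $-1\le s<0$ the paper must expand the ratio $\omega_\beta(k,\xi-tk)^{2(s+1)}/\omega_\beta(k,\xi)^{2s}$ into nine cross terms and estimate each one separately by the same case analysis. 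Your \emph{multiplicative} comparison $\omega_\beta(k,\xi-tk)\lesssim\omega_\beta(k,\xi)\,(1+B^{\beta/(\beta+1)})$, resting on the factorization $t^\beta|k|^\beta=B^{\beta/(\beta+1)}|k|^{\beta/(\beta+1)}$ with $B=|k|^\beta t^{\beta+1}$, bypasses both the case distinction and the sign split: after raising to the power $2(s+1)\ge 0$, the quotient $\omega_\beta(k,\xi)^{2(s+1)}/\omega_\beta(k,\xi)^{2s}=\omega_\beta(k,\xi)^{2}$ cancels exactly regardless of the sign of $s$, and the remaining factors $t^2\omega_\beta(k,\xi)^2\lesssim T^2+A^2+B^{2/(\beta+1)}$ (with $A=|\xi|^\beta t$) and $(1+B^{\beta/(\beta+1)})^{2(s+1)}$ are absorbed by $e^{-c_2(A+B)}$ via $\sup_{a\ge 0}a^\gamma e^{-ca}<\infty$. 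What the paper's route offers is a sequence of entirely elementary term-by-term estimates; what yours offers is a single argument valid for all $s\ge-1$ that isolates precisely why the shift $\xi\mapsto\xi-tk$ is harmless (it costs only a power of $B$, which the exponential decay eats). One bookkeeping remark: as you correctly note, the constant depends on $\beta$ and $s$ as well as $T$, which matches the paper, where $c$ has the same dependencies.
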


\begin{proof}
	As $s \ge -1$ we have
	\begin{equation*}
		\omega_\beta(k,\xi)^{2(s+1)} \lesssim 1+\abs{\xi}^{2(s+1)\beta}+ \abs{k}^{2(s+1)\frac{\beta}{\beta+1}}.
	\end{equation*}
	If $t\abs{k} \le \abs{\xi}$, then 
	\begin{align*}
		\abs{\xi-tk}^{2s\beta+2\beta}e_\beta(t,k,\xi-tk) &\lesssim \abs{\xi}^{2s\beta}t^{-2} (t\abs{\xi}^\beta)^{2} \exp(-c\abs{\xi}^\beta t) \\
		&\le \abs{\xi}^{2s\beta}t^{-2} \sup_{x>0} x^2\exp(-cx) \lesssim \abs{\xi}^{2s\beta}t^{-2},
	\end{align*}
	as a consequence of Lemma \ref{lem:upperbounde}. In the case $t\abs{k} \ge \abs{\xi}$ we deduce
	\begin{align*}
		\abs{\xi-tk}^{2s\beta+2\beta}e_\beta(t,k,\xi-tk) &\lesssim \abs{k}^{2s\frac{\beta}{\beta+1}} t^{-2}   \left( t^{2((s+1)\beta+1)}\abs{k}^{2((s+1)\beta+1)\frac{\beta}{\beta+1}} \exp(-c\abs{k}^\beta t^{\beta+1}) \right) \\
		&= \abs{k}^{2s\frac{\beta}{\beta+1}} t^{-2}   \left( \abs{k}^{\beta}t^{\beta+1}\right)^\frac{2((s+1)\beta+1)}{\beta+1} \exp(-c\abs{k}^\beta t^{\beta+1}) \\
		&\le \abs{k}^{2s\frac{\beta}{\beta+1}} t^{-2} \sup_{x>0} x^{\frac{2((s+1)\beta+1)}{\beta+1}} \exp(-cx) \\
		&\lesssim \abs{k}^{2s\frac{\beta}{\beta+1}} t^{-2}.
	\end{align*}
	Moreover, we have $1 \le c(T)t^{-2}$ for some constant $c = c(T)$. For $s \ge 0$ we have $\omega_\beta(k,\xi)^{2s} \approx 1+\abs{\xi}^{2s\beta}+ \abs{k}^{2s\frac{\beta}{\beta+1}}$, which shows the claim. 
	
	For $s<0$ we need to argue differently. First, we observe that
	\begin{align*}
		\frac{\omega_\beta(k,\xi-tk)^{2(s+1)}}{\omega_\beta(k,\xi)^{2s}} &\lesssim \abs{\xi-tk}^{2(s+1)\beta}\abs{\xi}^{-2\beta s}+ \abs{k}^{2\frac{\beta}{\beta+1}} \\
		&+ \abs{\xi-tk}^{2(s+1)\beta} \abs{k}^{-2s\frac{\beta}{\beta+1}}+\abs{k}^{2(s+1)\frac{\beta}{\beta+1}}\abs{\xi}^{-2\beta s} \\
		&+1+\abs{\xi}^{-2\beta s}+\abs{k}^{-2s\frac{\beta}{\beta+1}}+\abs{\xi-tk}^{2(s+1)\beta}+\abs{k}^{2(s+1)\frac{\beta}{\beta+1}}.
	\end{align*}
	Again, distinguishing the cases $t\abs{k} \le \abs{\xi}$ and $t\abs{k} \ge \abs{\xi}$ we can treat each of these terms. We show this for the first term in the case $t\abs{k} \ge \abs{\xi}$. We have
	\begin{align*}
		\abs{\xi-tk}^{2(s+1)\beta}\abs{\xi}^{-2\beta s}e_\beta(t,k,\xi-tk) &\lesssim t^{2(s+1)\beta}\abs{k}^{2(s+1)\beta} \exp(-c\abs{k}^\beta t^{\beta+1}) \abs{\xi}^{-2\beta s} \exp(-c\abs{\xi}^\beta t) \\
		&\lesssim t^{-2s-2}t^{2s} = t^{-2}.
	\end{align*}
	The other estimates follow similarly, where the last five terms can be estimated by $c(T)t^{-2}$ only.
\end{proof}

\section{Estimates in the nonhomogeneous case with vanishing initial data}
\label{sec:inhomo}

In this section we prove $L^2$-estimates for functions $u$ given by equation \eqref{eq:fouriersol} in the case of zero initial value $g = 0$.

\begin{prop} \label{prop:inhomogeneity}
	Let $\beta \in (0,2]$, $s \ge -1/2$, $\mu \in (\frac{1}{2},1]$ and $T \in (0,\infty)$.  For every $f \in L^2_\mu((0,T);{X}^s_\beta)$, the function $u$ given by equation \eqref{eq:fouriersol} with $g = 0$ satisfies $u \in L^2_\mu((0,T);{X}^{s+1}_\beta)$.
	In particular, the estimate 
	\begin{align*}
		\int_{0}^T \int_{\R^{2n}} t^{2-2\mu} \omega_\beta(k,\xi)^{2(s+1)}\abs{\hat{u}(t,k,\xi)}^2 \dx k \dx \xi \dx t \le c \int_{0}^T \int_{\R^{2n}} t^{2-2\mu} \omega_\beta(k,\xi)^{2s}\abs{\hat{f}(t,k,{\xi})}^2 \dx k \dx {\xi}   \dx t
	\end{align*}
	is satisfied for some constant $c = c(s,\beta,T) > 0$.
\end{prop}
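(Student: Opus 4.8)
The whole argument is carried out on the Fourier side: by Plancherel's theorem and the equivalence of the $X^\bullet_\beta$-norm with the $\omega_\beta$-weighted $L^2$-norm, the asserted inequality is precisely the displayed estimate. Inserting \eqref{eq:fouriersol} with $g=0$ (and relabelling the integration variable there so that $s$ keeps denoting the Sobolev exponent), I would first straighten the characteristics. Writing $\tilde u(t,k,\eta)=\hat u(t,k,\eta-tk)$ and $\tilde f(\sigma,k,\eta)=\hat f(\sigma,k,\eta-\sigma k)$ freezes the spatial argument and yields the genuine time convolution
\[ \tilde u(t,k,\eta)=\int_0^t E(t,\sigma,k,\eta)\,\tilde f(\sigma,k,\eta)\,\dx\sigma,\qquad E(t,\sigma,k,\eta)=\exp\Big(-\int_\sigma^t|\eta-rk|^\beta\,\dx r\Big). \]
Since for each fixed $k$ the shift $\xi\mapsto\eta\mp tk$ has unit Jacobian, the two sides of the target inequality transform into $\omega_\beta(k,\eta-tk)$- and $\omega_\beta(k,\eta-\sigma k)$-weighted space-time integrals of $\tilde u$ and $\tilde f$.

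By Fubini it then suffices to prove, uniformly in $(k,\eta)$, the one-dimensional estimate
\[ \int_0^T t^{2-2\mu}\,\omega_\beta(k,\eta-tk)^{2(s+1)}\,|\tilde u(t)|^2\,\dx t\le c\int_0^T \sigma^{2-2\mu}\,\omega_\beta(k,\eta-\sigma k)^{2s}\,|\tilde f(\sigma)|^2\,\dx\sigma. \]
Absorbing the weights into $g(\sigma)=\sigma^{1-\mu}\omega_\beta(k,\eta-\sigma k)^{s}\tilde f(\sigma)$ and $v(t)=t^{1-\mu}\omega_\beta(k,\eta-tk)^{s+1}\tilde u(t)$, this is exactly the boundedness on $L^2(0,T)$ of the integral operator with kernel
\[ M(t,\sigma)=\Big(\tfrac{t}{\sigma}\Big)^{1-\mu}\frac{\omega_\beta(k,\eta-tk)^{s+1}}{\omega_\beta(k,\eta-\sigma k)^{s}}\,E(t,\sigma,k,\eta), \]
which I would establish by Schur's test, i.e.\ by bounding $\sup_t\int_0^t M(t,\sigma)\,\dx\sigma$ and $\sup_\sigma\int_\sigma^T M(t,\sigma)\,\dx t$ by a constant independent of $(k,\eta)$.

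Lemma \ref{lem:decayestimsemi}, applied with $\zeta=\eta-\sigma k$ and $t-\sigma$ in place of its variables, gives exactly $\frac{\omega_\beta(k,\eta-tk)^{2(s+1)}}{\omega_\beta(k,\eta-\sigma k)^{2s}}E(t,\sigma,k,\eta)\le c(T)(t-\sigma)^{-2}$, hence $M(t,\sigma)\lesssim(t/\sigma)^{1-\mu}(t-\sigma)^{-1}E(t,\sigma,k,\eta)^{1/2}$. The expected main obstacle is the diagonal $\sigma\to t$: this factor $(t-\sigma)^{-1}$ is not integrable, which reflects the singular-integral nature of the estimate and is the reason a crude operator-norm (Minkowski) reduction to a single scalar convolution kernel necessarily fails. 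The remedy is to match two bounds. Away from the diagonal, for $t-\sigma\ge\omega_\beta(k,\eta-tk)^{-1}$, I use the lemma bound together with the anisotropic decay of $E$ from Lemma \ref{lem:upperbounde}; the substitution $u=\omega_\beta(k,\eta-tk)\,(t-\sigma)$ then turns $\int(t-\sigma)^{-1}E^{1/2}\,\dx\sigma$ into a convergent integral independent of the frequency. On the diagonal block $t-\sigma\le\omega_\beta(k,\eta-tk)^{-1}$ I instead do \emph{not} spend the gain of a derivative: there one checks $\omega_\beta(k,\eta-\sigma k)\approx\omega_\beta(k,\eta-tk)$, so the prefactor is of size $\omega_\beta(k,\eta-tk)$ over a $\sigma$-interval of length $\omega_\beta(k,\eta-tk)^{-1}$, giving an $O(1)$ contribution. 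The crossover $t-\sigma\sim\omega_\beta(k,\eta-tk)^{-1}$ is precisely the anisotropic scale on which $E$ begins to decay.

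Finally, $\mu\in(1/2,1]$ guarantees the convergence of the $(t/\sigma)^{1-\mu}$-factor in both Schur integrals (in particular at the endpoints $\sigma\to0$ and $t\to\sigma$), while $s\ge-1/2$ is used to keep the weight ratio $\omega_\beta(k,\eta-tk)^{s+1}/\omega_\beta(k,\eta-\sigma k)^{s}$ under control in the near-diagonal block when $s<0$. Combining the two regimes yields the uniform Schur bounds, hence the $L^2(0,T)$-boundedness of $M$ for every $(k,\eta)$; undoing the reductions gives the claimed estimate and in particular $u\in L^2_\mu((0,T);X^{s+1}_\beta)$.
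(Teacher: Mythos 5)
Your reductions are sound and genuinely different from the paper's route: straightening the characteristics does turn \eqref{eq:fouriersol} into the time convolution you write (indeed $E(t,\sigma,k,\eta)=e_\beta(t-\sigma,k,\eta-tk)$), the Fubini reduction to a one-dimensional kernel bound uniform in $(k,\eta)$ is correct, Lemma \ref{lem:decayestimsemi} applies exactly as you claim with $\xi=\eta-\sigma k$, and your near/far-diagonal splitting at the scale $t-\sigma\sim\omega_\beta(k,\eta-tk)^{-1}$ correctly tames the $(t-\sigma)^{-1}$ singularity in the row sums $\sup_t\int_0^t M(t,\sigma)\,\dx\sigma$. However, there is a genuine gap for $\mu\in(\tfrac12,1)$: the \emph{unweighted} Schur test you invoke cannot work, because the column sums are not uniformly bounded — in fact they are infinite at every frequency. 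At $k=\eta=0$ one has $E\equiv 1$ and $\omega_\beta\equiv 1$, so $M(t,\sigma)=(t/\sigma)^{1-\mu}$ and
\[
\int_\sigma^T M(t,\sigma)\,\dx t=\sigma^{-(1-\mu)}\int_\sigma^T t^{1-\mu}\,\dx t\;\sim\;\sigma^{-(1-\mu)}\,\frac{T^{2-\mu}}{2-\mu}\;\longrightarrow\;\infty\qquad(\sigma\to 0),
\]
and for a general fixed $(k,\eta)$ the same computation (using the exponential decay of $E$ on the scale $\omega^{-1}$) gives $\int_\sigma^T M(t,\sigma)\,\dx t\gtrsim \bigl(\sigma\,\omega_\beta(k,\eta)\bigr)^{-(1-\mu)}\to\infty$ as $\sigma\to0$. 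So your final claim that $\mu>\tfrac12$ "guarantees convergence in both Schur integrals" is not correct: it makes each column integral finite for fixed $\sigma$, but $\sup_\sigma\int_\sigma^T M(t,\sigma)\,\dx t=\infty$. The operator \emph{is} bounded (this is a Hardy-type phenomenon, and $\mu>\tfrac12$ is exactly the right threshold), but the plain Schur test is structurally unable to see it; your argument is only complete in the unweighted case $\mu=1$.

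Two ways to close the gap. (i) Keep your kernel but run Schur's test with the power weight $h(t)=t^{\delta-1/2}$, $0<\delta<\mu-\tfrac12$; your near/far estimates can be redone against this weight, and it absorbs the $(t/\sigma)^{1-\mu}$ factor (this is the standard device for Hardy-type kernels). (ii) Do what the paper does: split the weight ratio first, $(t/\sigma)^{1-\mu}=1+\varphi\bigl(\tfrac{t-\sigma}{\sigma}\bigr)$ with $\varphi(r)=(1+r)^{1-\mu}-1$, i.e.\ the Pr\"uss--Simonett decomposition $\hat u=\hat u_1+\hat u_2$. The "$1$" piece is precisely your kernel at $\mu=1$, for which your Schur argument (or the paper's Cauchy--Schwarz plus Fubini computation) succeeds; the remainder piece carries a factor vanishing on the diagonal, which exactly compensates the $(t-\sigma)^{-1}$ singularity coming from Lemma \ref{lem:decayestimsemi}, and its $L^2(0,T)$-boundedness is quoted from \cite[Proposition 2.3]{pruss_maximal_2004} — this is where $\mu>\tfrac12$ actually enters. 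In short, the paper separates the two difficulties (diagonal singularity of the semigroup kernel; growth of the temporal weight ratio) that your single kernel $M$ mixes together, and it is precisely this mixture that defeats the unweighted Schur test.
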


\begin{proof}
	Let us first note that we can write 
	\begin{align*}
		\hat{u}(t,k,\xi) &= t^{\mu-1} \int_0^t e_\beta(t-\tau,k,\xi)\tau^{1-\mu}\hat{f}(\tau,k,\xi+(t-\tau)k) \dx \tau \\
		&\hphantom{=}+t^{\mu-1} \int_0^t e_\beta(t-\tau,k,\xi)\left( (t/\tau)^{1-\mu}-1 \right)\tau^{1-\mu}\hat{f}(\tau,k,\xi+(t-\tau)k) \dx \tau \\
		&=:\hat{u}_1(t,k,\xi)+\hat{u}_2(t,k,\xi)
	\end{align*}
	and that the second term vanishes in the unweighted case $\mu = 1$. It suffices to estimate each term separately. This decomposition goes back to \cite{pruss_maximal_2004} and proves to be useful when dealing with temporal weights. We use the Cauchy-Schwarz inequality to estimate
	\begin{align*}
		&\int_{0}^T \int_{\R^n} \int_{\R^n} t^{2-2\mu}  \omega_\beta(k,\xi)^{2(s+1)}\abs{\hat{u}_1(t,k,\xi)}^2 \dx k \dx \xi \dx t \\
		&\le \int_{0}^T \int_{\R^n} \int_{\R^n} \int_0^t \omega_\beta(k,\xi)^{2s+1}e_\beta(t-\tau,k,\xi)\tau^{2-2\mu}\abs{\hat{f}(\tau,k,\xi + (t-\tau)k)}^2 \dx \tau \, \eta(t,k,\xi) \dx k \dx \xi \dx t,
	\end{align*}
	where
	\begin{equation*}
		\eta(t,k,\xi) = \omega_\beta(k,\xi)\int_0^t  e_\beta(t-\tau,k,\xi)  \dx \tau.
	\end{equation*}
	We have
	\begin{align*}
		\abs{\xi}^\beta \int_0^t  e_\beta(\tau,k,\xi) \dx \tau &\le  \abs{\xi}^\beta\int_0^\infty e_\beta(\tau,k,\xi) \dx \tau \le \abs{\xi}^\beta\int_0^\infty \exp(-c\abs{\xi}^\beta \tau) \dx \tau = c
	\end{align*}
	for all $\xi,k \in \R^n$ by Lemma \ref{lem:upperbounde} and an explicit calculation of the last integral. Similarly, we calculate
	\begin{align*}
		\abs{k}^{\frac{\beta}{\beta+1}} \int_0^t  e_\beta(\tau,k,\xi) \dx \tau &\le  \abs{k}^{\frac{\beta}{\beta+1}} \int_0^\infty  e_\beta(\tau,k,\xi) \dx \tau \lesssim \abs{k}^{\frac{\beta}{\beta+1}} \int_0^\infty  \exp(-c\abs{k}^\beta \tau^{\beta+1}) \dx \tau \\
		&=  \int_0^\infty  \exp(-c\tau^{\beta+1}) \dx \tau =c(\beta)
	\end{align*}
	and
	\begin{equation*}
		\int_0^t e_\beta(\tau,k,\xi) \dx \tau \le T.
	\end{equation*}
	The latter estimates combined give
	\begin{equation*}
		\eta(t,k,\xi) = \omega_\beta(k,\xi)\int_0^t  e_\beta(t-\tau,k,\xi)  \dx \tau \le c(\beta,T).
	\end{equation*}
	Using Fubini's theorem we deduce \allowdisplaybreaks
	\begin{align*}
		&\int_{0}^T \int_{\R^n} \int_{\R^n} t^{2-2\mu} \omega_\beta(k,\xi)^{2(s+1)}\abs{\hat{u}_1(t,k,\xi)}^2 \dx k \dx \xi \dx t \\
		&\lesssim  \int_{\R^n} \int_{0}^T   \int_0^t \int_{\R^n} \omega_\beta(k,\xi)^{2s+1}e_\beta(t-\tau,k,\xi)\tau^{2-2\mu}\abs{\hat{f}(\tau,k,\xi + (t-\tau)k)}^2 \dx \xi \dx \tau   \dx t \dx k \\
		&=  \int_{\R^n} \int_{0}^T   \int_0^t \int_{\R^n} \omega_\beta(k,{\xi}-(t-\tau)k)^{2s+1} e_\beta(t-\tau,k,{\xi}-(t-\tau)k)\tau^{2-2\mu}\abs{\hat{f}(\tau,k,{\xi})}^2 \dx {\xi} \dx \tau   \dx t \dx k \\
		&= \int_{\R^n} \int_{0}^T   \int_\tau^T \int_{\R^n} \omega_\beta(k,{\xi}-(t-\tau)k)^{2s+1}  e_\beta(t-\tau,k,{\xi}-(t-\tau)k)\tau^{2-2\mu}\abs{\hat{f}(\tau,k,{\xi})}^2 \dx {\xi} \dx t   \dx \tau \dx k \\
		&\le  \int_{\R^n} \int_{0}^T   \int_0^T \int_{\R^n} \omega_\beta(k,{\xi}-rk)^{2s+1} e_\beta(r,k,{\xi}-rk)\tau^{2-2\mu}\abs{\hat{f}(\tau,k,{\xi})}^2 \dx {\xi} \dx r \dx \tau \dx k \\
		&= \int_{\R^n}\int_{\R^n} \int_{0}^T   \int_{\frac{\abs{\xi}}{\abs{k}} \land T}^T  \omega_\beta(k,{\xi}-rk)^{2s+1} e_\beta(r,k,{\xi}-rk) \dx r \; \tau^{2-2\mu}\abs{\hat{f}(\tau,k,{\xi})}^2  \dx k  \dx \xi \dx \tau \\
		&\hphantom{=}+ \int_{\R^n}\int_{\R^n} \int_{0}^T   \int_0^{\frac{\abs{\xi}}{\abs{k}} \land T} \omega_\beta(k,{\xi}-rk)^{2s+1} e_\beta(r,k,{\xi}-rk) \dx r \; \tau^{2-2\mu}\abs{\hat{f}(\tau,k,{\xi})}^2 \dx k  \dx \xi \dx \tau \\
		&=: \int_{\R^n}\int_{\R^n} \int_{0}^T [I_1(k,\xi) + I_2(k,\xi)]\tau^{2-2\mu}\omega_\beta(k,\xi)^{2s}\abs{\hat{f}(\tau,k,{\xi})}^2  \dx \tau \dx \xi \dx k,
	\end{align*}
	where
	\begin{equation*}
		I_1(k,\xi) =  \omega_\beta(k,\xi)^{-2s}\int_{\frac{\abs{\xi}}{\abs{k}} \land T}^T  \omega_\beta(k,{\xi}-rk)^{2s+1} e_\beta(r,k,{\xi}-rk) \dx r
	\end{equation*}
	and
	\begin{equation*}
		I_2(k,\xi) = \omega_\beta(k,\xi)^{-2s}\int_0^{\frac{\abs{\xi}}{\abs{k}} \land T} \omega_\beta(k,{\xi}-rk)^{2s+1} e_\beta(r,k,{\xi}-rk) \dx r.
	\end{equation*}
	We consider the first integral and make the following observation. If $s \ge -1/2$, then
	\begin{equation*}
		\abs{\xi-rk}^{\beta(2s+1)} \lesssim r^{\beta(2s+1)}\abs{k}^{\beta(2s+1)}
	\end{equation*}
	as $\abs{\xi} \le \abs{k}r$, whence, considering the cases $s \ge 0$ and $s<0$ separately it follows that
	\begin{align*}
		&\omega_\beta(k,{\xi}-rk)^{2s+1}\omega_\beta(k,\xi)^{-2s} \\ &\lesssim \begin{cases}
			1+\abs{k}^{\frac{\beta}{\beta+1}} +r^{\beta(2s+1)}\abs{k}^{\beta(2s+1)-2s\frac{\beta}{\beta+1}}, & s \ge 0 \\
			 1+\abs{k}^{\frac{\beta}{\beta+1}} +r^{\beta}\abs{k}^{\beta},& -1/2 \le s < 0.
		\end{cases} 
	\end{align*}
	In combination with Lemma \ref{lem:upperbounde} this observation shows, that it suffices to calculate the integral
	\begin{align*}
		\int_0^\infty r^a \exp(-c\abs{k}^\beta r^{\beta+1}) \dx r = \abs{k}^{-(a+1)\frac{\beta}{\beta+1}} \int_0^\infty s^a \exp(-cs^{\beta+1}) \dx s= c(\beta,s,a)\abs{k}^{-(a+1)\frac{\beta}{\beta+1}}
	\end{align*}
	for $a > -1 $. Indeed, choosing $a = 0$ and $a = (2s+1)\beta$ we deduce that $I_1(k,\xi) \le c(s,\beta,T)$ if $s \ge 0$. If $-1/2 \le s < 0$ we additionally need to consider $a = \beta$. 
	
	Next, we estimate the second integral. Similarly, it follows
	\begin{equation*}
		\omega_\beta(k,\xi-rk)^{2s+1}\omega_\beta(k,\xi)^{-2s} \lesssim 1+
			\abs{\xi}^{\beta}+ \abs{k}^{\frac{\beta}{\beta+1}}
	\end{equation*}
	arguing as before while using $\abs{\xi-rk} \lesssim \abs{\xi}$. The integral above with $a = 0$ and
	\begin{equation*}
		\int_0^\infty \abs{\xi}^\beta \exp(-c\abs{\xi}^\beta t) \dx t = c(\beta)
	\end{equation*}
	together with Lemma \ref{lem:upperbounde} imply $I_2(k,\xi) \le c(\beta,T)$.
	This shows the estimate on $\hat{u}_1$. 
	
	We are going to prove the estimate on $\hat{u}_2$ by introducing a suitable singular integral operator inspired by the proof of \cite[Proposition 2.3]{pruss_maximal_2004}. We denote by $\varphi$ the function $\varphi(r) = (1+r)^{1-\mu}-1$ and estimate 
	\begin{align*}
		&\int_{0}^T \int_{\R^n} \int_{\R^n} t^{2-2\mu}  \omega_\beta(k,\xi)^{2(s+1)}\abs{\hat{u}_2(t,k,\xi)}^2 \dx k \dx \xi \dx t \\
		&= \int_{0}^T \int_{\R^n} \int_{\R^n} \omega_\beta(k,\xi)^{2(s+1)} \abs{\int_0^t e_\beta(t-s,k,\xi)\varphi\left( \frac{t-s}{s}\right)s^{1-\mu}\hat{f}(s,k,\xi+(t-s)k) \dx s}^2 \dx k \dx \xi \dx t \\
		&\le \int_{0}^T \left( \int_0^t \varphi\left( \frac{t-s}{s}\right) \left( \int_{\R^n} \int_{\R^n}  \left| \omega_\beta(k,\xi-(t-s)k)^{s+1} \right. \right. \right. \\
		& \left. \left. \left. \quad e_\beta(t-s,k,\xi-(t-s)k)s^{1-\mu}\hat{f}(s,k,\xi) \right|^2 \dx k \dx \xi \right)^\frac{1}{2} \dx s \right)^2 \dx t \\
		&\lesssim \int_{0}^T \left( \int_0^t (t-s)^{-1}\varphi\left( \frac{t-s}{s}\right) \left( \int_{\R^n} \int_{\R^n} s^{2-2\mu} \omega_\beta(k,\xi)^{2s} \abs{ \hat{f}(s,k,\xi) }^2 \dx k \dx \xi \right)^\frac{1}{2} \dx s \right)^2 \dx t.
	\end{align*}
	We have first used the Minkowski integral inequality and then the estimate given by Lemma \ref{lem:decayestimsemi}. It is proven in \cite[Proposition 2.3]{pruss_maximal_2004} that the integral operator 
	\begin{equation*}
		S\colon L^2((0,T);\R) \to L^2((0,T);\R), \quad [Sw](t) = \int_{0}^t (t-s)^{-1}\varphi\left( \frac{t-s}{s}\right) w(s) \dx s
	\end{equation*}
	is bounded. We choose $w$ as the function defined by 
	$$ w(s) = \left( \int_{\R^n} \int_{\R^n} s^{2-2\mu} \omega_\beta(k,\xi)^{2s} \abs{ \hat{f}(s,k,\xi) }^2 \dx k \dx \xi \right)^\frac{1}{2}, $$
	then
	\begin{align*}
		&\int_{0}^T \int_{\R^n} \int_{\R^n} t^{2-2\mu}  \omega_\beta(k,\xi)^{2(s+1)}\abs{\hat{u}_2(t,k,\xi)}^2 \dx k \dx \xi \dx t \lesssim \int_0^T \abs{[Sw](t)}^2 \dx t  \lesssim \int_0^T \abs{w(t)}^2 \dx t \\
		&= \int_{0}^T \int_{\R^{2n}} t^{2-2\mu}  \omega_\beta(k,\xi)^{2s}\abs{\hat{f}(t,k,{\xi})}^2 \dx {\xi}   \dx k\dx t.
	\end{align*}
	Together, the estimates on $\hat{u}_1$ and $\hat{u}_2$ complete the proof of the proposition.
\end{proof} 

\begin{remark}
	Proposition \ref{prop:inhomogeneity} continues to hold for the homogeneous spaces $\dot{X}_\beta^s$ and the constant in the estimate does not depend on $T$.
\end{remark}

\begin{remark}
	We note that to get a non-integer shift in the velocity regularity one cannot simply differentiate the equation. Even for the integer regularity shift in velocity this requires a little trick. Let us sketch this. Differentiating the Kolmogorov equation once in velocity results in 
	\begin{equation*}
		\partial_t \partial_{v_i} u + v \cdot \nabla_x (\partial_{v_i}u)  = \Delta_v (\partial_{v_i} u) + \partial_{v_i} f- \partial_{x_i} u 
	\end{equation*}
	for $i = 1,\dots,n$. A priori we do not know anything about $\partial_{x_i} u$. However, differentiating the Kolmogorov equation in $x$ leads to 
	\begin{equation*}
			\partial_t D_x^\frac{1}{3} u + v \cdot \nabla_x (D_x^\frac{1}{3}u)  = \Delta_v (D_x^\frac{1}{3} u) + D_x^\frac{1}{3} f, 
	\end{equation*}
	thus assuming that $f \in L^2((0,T);X_2^{{1}/{2}})$ we deduce first $D_x u = D_x^\frac{2}{3}D_x^\frac{1}{3}u \in L^2((0,T);L^2(\R^{2n}))$ and then $u \in L^2((0,T);H^3_v(\R^{2n}))$. This shows that $u \in L^2((0,T);X_2^{{3}/{2}})$.
\end{remark}

\begin{remark}
	The reader, who is familiar with the work in \cite{bouchut_hypoelliptic_2002} could be tempted to think that it is unnecessary to prove the regularity in $x$ as it follows directly from the regularity in $v$. However, the theorem by Bouchut does only apply in the case $\mu = 1$. A version of Bouchut's theorem in $L^2$ spaces with temporal weights will be presented in Section \ref{sec:regCinf}. An extension of these results to the cases of $L^p(L^q)$ spaces with temporal weights will be considered in the forthcoming article \cite{niebel_kinetic_nodate-1}. 
\end{remark}

\begin{remark}

\label{sec:optsob}
Let us comment on the optimal gain of regularity for solutions of the Kolmogorov equation in terms of global $L^2$-estimates in the setting of strong solutions. We consider the case that $g = 0$ and let $u$ be a solution of the Kolmogorov equation. Is there a choice of parameters $\delta_1,\delta_2,\delta_3 >0$ such that
	\begin{equation*}
		\norm{D_t^{\delta_1}u}_{2,(0,\infty) \times \R^{2n}} + \norm{D_x^{\delta_2}u}_{2,(0,\infty) \times \R^{2n}} + \norm{D_v^{\delta_3}u}_{2,(0,\infty) \times \R^{2n}} \lesssim \norm{f}_{2,(0,\infty) \times \R^{2n}}?
	\end{equation*}
	Here $D_t^{\delta_1}u$ stands for an appropriately defined fractional time derivative of $u$, which enjoys the
	natural scaling law w.r.t.\ a dilation in time.
	Introducing the dilation 
\[	
\lambda \mapsto u(\lambda^\beta t,\lambda^{\beta+1}x,\lambda v)=:u_\lambda (t,x,v),
\]
the function $u_\lambda$ is a solution of the Kolmogorov equation with inhomogeneity $\lambda^\beta f_\lambda$. The scaling of the (fractional) derivatives and the definition of $u_\lambda $ together with the aforementioned $\beta$-homogeneity of the Kolmogorov equation gives
	\begin{equation*}
		\lambda^{\beta \delta_1}\norm{D_t^{\delta_1}u}_{2,(0,\infty) \times \R^{2n}}\!\!\! + \lambda^{(\beta+1)\delta_2}\norm{D_x^{\delta_2}u}_{2,(0,\infty) \times \R^{2n}}\!\!\! + \lambda^{\delta_3}\norm{D_v^{\delta_3}u}_{2,(0,\infty) \times \R^{2n}} \lesssim \lambda^\beta \norm{f}_{2,(0,\infty) \times \R^{2n}}
	\end{equation*}
	for all $\lambda >0$. This inequality can only hold if $\delta_1 = 1$, $\delta_2 =  \frac{\beta}{\beta +1 }$ and $\delta_3 = \beta$. We see that in this sense the position and velocity estimates shown in the previous section are optimal. It remains to investigate whether one can gain the full derivative in time.

\noindent One can show that the operator $\K u= -(-\Delta_v)^{\beta/2} u- v \cdot \nabla_x u$ with domain $D(\K) = \{ u \in L^2(\R^{2n}) \; | \; (-\Delta_v)^{\beta/2} u,v \cdot \nabla_x u \in L^2(\R^{2n}) \}$ is the generator of a contractive positive $C_0$-semigroup. This has been proven in \cite[Proposition 2.5]{alphonse_smoothing_2018}. Moreover, it can be shown that $i \R \subset \sigma(\K)$ using the methods introduced in \cite{metafune_lp-spectrum_2001}. This shows that the Kolmogorov semigroup cannot be analytic and thus the generator does not enjoy maximal $L^p$-regularity for any $p \in (1,\infty)$ in $L^2(\R^{2n})$. We remark that, using refined methods, it is possible to show that $\sigma(\K) = \{ z \in \C \; | \;  \mathrm{Re} (z) \le 0 \}$ as has been done in \cite{metafune_lp-spectrum_2020} in the case $\beta = 2$. 

Let us now suppose that for some $T>0$ and for every $f \in L^2((0,T) \times \R^{2n})$ we have $\partial_t u \in L^2((0,T)\times \R^{2n})$. This would imply that $v \cdot \nabla_x u \in L^2((0,T)\times \R^{2n}) $ and hence $u \in L^2((0,T), D(\K))$. Together with $u \in H^1((0,T),L^2(\R^{2n}))$ this would imply that the Kolmogorov equation satisfies the maximal $L^2$-regularity property, which is a contradiction. This shows that one can only obtain a gain of regularity in the position and in the velocity variable as given by the estimate
\begin{equation*}
  \norm{D_x^{\frac{\beta}{\beta+1}}u}_{2,(0,\infty) \times \R^{2n}} + \norm{D_v^{\beta}u}_{2,(0,\infty) \times \R^{2n}} \lesssim \norm{f}_{2,(0,\infty) \times \R^{2n}}.
\end{equation*}
\end{remark}

\section{Estimates in the homogeneous case with nonvanishing initial data}
\label{sec:homo}
In this section we are going to investigate the initial value regularity of the fractional Kolmogorov equation with $f=0$. 

\begin{lemma} \label{lem:esteta}
	Let $\beta \in (0,2]$, $\mu \in (1/2,1]$, $s \ge -1$, $T \in (0,\infty]$ and define $\eta_\beta \colon \R^{2n} \to [0,\infty)$ as
	\begin{equation*}
		\eta_\beta(k,\xi) = \int_0^T t^{2-2\mu}\omega_\beta(k,\xi-tk)^{2(s+1)}  e_\beta(t,k,{\xi}-tk)^2 \dx t. 
	\end{equation*}
	If $T \in (0,\infty)$, then the function $\eta_\beta$ satisfies the estimate
	\begin{equation*}
		 \eta_\beta(k,\xi) \le c_1 \omega_\beta(k,\xi)^{2(s+1/2-(1-\mu))}
	\end{equation*}
	for a constant $c_1 = c_1(\beta,\mu,s,T)>0$. Moreover, if $T = \infty$ the estimate
	\begin{equation*}
		 c_2 \omega_\beta(k,\xi)^{2(s+1/2-(1-\mu))} \le \eta_\beta(k,\xi)
	\end{equation*}
	holds for some constant $c_2 =c_2(\beta,\mu)>0$.
\end{lemma}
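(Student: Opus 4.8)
The plan is to reduce everything to the elementary one-dimensional integrals $\int_0^\infty t^a\exp(-ct)\,\dx t$ and $\int_0^\infty t^a\exp(-ct^{\beta+1})\,\dx t$, which are finite for $a>-1$ and scale like a negative power of the parameter. The two inputs are the two-sided bound on $e_\beta$ from Lemma \ref{lem:upperbounde}, applied with $\xi$ replaced by $\xi-tk$, giving $e_\beta(t,k,\xi-tk)^2\approx\exp(-c|\xi|^\beta t-c|k|^\beta t^{\beta+1})$ up to the usual change of $c$, and the elementary equivalence $\omega_\beta(k,\xi-tk)^{2(s+1)}\approx 1+|k|^{2(s+1)\beta/(\beta+1)}+|\xi-tk|^{2(s+1)\beta}$, which holds precisely because $s\ge -1$ makes the exponent $2(s+1)$ nonnegative. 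Throughout I would split the $t$-integration at $t|k|=|\xi|$ and bound $|\xi-tk|$ by a multiple of $|\xi|$ on $\{t|k|\le|\xi|\}$ and by $t|k|$ on $\{t|k|\ge|\xi|\}$, exactly as in the proof of Lemma \ref{lem:decayestimsemi}.

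For the upper bound (finite $T$) I would treat the three summands of the expansion of $\omega_\beta(k,\xi-tk)^{2(s+1)}$ separately. On $\{t|k|\le|\xi|\}$ the $|\xi-tk|^{2(s+1)\beta}$-term is dominated by $|\xi|^{2(s+1)\beta}\int_0^\infty t^{2-2\mu}\exp(-c|\xi|^\beta t)\,\dx t\lesssim |\xi|^{\beta(2s-1+2\mu)}$, and on $\{t|k|\ge|\xi|\}$ by $|k|^{2(s+1)\beta}\int_0^\infty t^{2-2\mu+2(s+1)\beta}\exp(-c|k|^\beta t^{\beta+1})\,\dx t\lesssim |k|^{\frac{\beta}{\beta+1}(2s-1+2\mu)}$; the target exponent $2(s+1/2-(1-\mu))=2s-1+2\mu$ thus appears automatically from the bookkeeping of powers, and the "$1$" and "$|k|$" summands are handled the same way. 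The delicate point is that extending the time integral to $(0,\infty)$ is only efficient when the relevant frequency is $\gtrsim 1$; when $\omega_\beta(k,\xi)\approx 1$ the honest bound is $\int_0^T t^{2-2\mu}\,\dx t\le c(T)$, which is where the dependence of $c_1$ on $T$ enters. Comparing the two regimes — and using $s\ge -1$ to guarantee that the decaying powers $|\xi|^{-\beta(3-2\mu)}$ and $|k|^{-\frac{\beta}{\beta+1}(3-2\mu)}$ produced by the constant summand never decay faster than $\omega_\beta^{2s-1+2\mu}$ — yields $\eta_\beta(k,\xi)\le c_1\,\omega_\beta(k,\xi)^{2s-1+2\mu}$.

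For the lower bound ($T=\infty$) it suffices to retain one favourable slice of the time integral. In the velocity-dominated regime $|\xi|^\beta\ge|k|^{\beta/(\beta+1)}$ with $|\xi|\ge 1$ I would integrate over $t\in(0,c|\xi|^{-\beta})$: there $|\xi|^\beta t$ and $|k|^\beta t^{\beta+1}$ are both $O(1)$, so $e_\beta^2\gtrsim 1$ and $|\xi-tk|\ge|\xi|/2$, and the surviving factor $|\xi|^{2(s+1)\beta}\int_0^{c|\xi|^{-\beta}}t^{2-2\mu}\,\dx t$ is of order $|\xi|^{\beta(2s-1+2\mu)}\approx\omega_\beta(k,\xi)^{2s-1+2\mu}$. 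In the position-dominated regime $|k|^{\beta/(\beta+1)}\ge|\xi|^\beta$ with $|k|\ge 1$ I would integrate over a short interval $t\approx|k|^{-\beta/(\beta+1)}$, on which $t|k|\gtrsim|\xi|$ forces $|\xi-tk|\gtrsim t|k|\approx|k|^{1/(\beta+1)}$ and the same count produces $|k|^{\frac{\beta}{\beta+1}(2s-1+2\mu)}\approx\omega_\beta^{2s-1+2\mu}$; the low-frequency region $\omega_\beta\approx 1$ is trivial since the positive integrand is bounded below on compacta (and $\eta_\beta$ may even be $+\infty$ at the origin).

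The step I expect to be the real obstacle is the power bookkeeping in the upper bound when the target exponent $2s-1+2\mu$ is negative: there $\omega_\beta^{2s-1+2\mu}$ decreases in the frequency, so one cannot bound each time integral by its value at infinity and must separate the genuinely high-frequency regime, where the Gamma-type evaluation is sharp, from the bounded-frequency regime, where finiteness of $T$ supplies the constant. Getting the two homogeneities — degree $\beta$ in $\xi$ and degree $\frac{\beta}{\beta+1}$ in $k$ — to line up simultaneously against the single scalar weight $\omega_\beta$ is what makes the power counting, rather than any individual estimate, the crux.
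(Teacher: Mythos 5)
Your overall strategy --- split the time integral at $t\abs{k}=\abs{\xi}$, apply Lemma \ref{lem:upperbounde} at the shifted frequency $\xi-tk$, and reduce everything to Gamma-type integrals --- is exactly the paper's, and your lower-bound sketch is complete and correct (your explicit order-one time slices are, if anything, more careful than the paper's, which tacitly needs $t\le\abs{\xi}/(2\abs{k})$ rather than $t\le\abs{\xi}/\abs{k}$ to keep $\abs{\xi-tk}\gtrsim\abs{\xi}$). The genuine gap is in the upper bound, precisely in the case $2s-1+2\mu<0$ that you yourself flag as "the real obstacle" but never resolve --- and this case is squarely inside the hypotheses (e.g.\ $s=-1$, $\mu=3/4$). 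Your per-summand bounds produce the pure powers $\abs{\xi}^{\beta(2s-1+2\mu)}$ and $\abs{k}^{\frac{\beta}{\beta+1}(2s-1+2\mu)}$, and when the exponent is negative neither is $\lesssim\omega_\beta(k,\xi)^{2s-1+2\mu}$: for $\abs{\xi}$ small and $\abs{k}$ large the first is huge while the target is small. The failure comes from discarding one exponential factor: on $\{t\abs{k}\le\abs{\xi}\}$ you keep only $\exp(-c\abs{\xi}^\beta t)$, so your bound cannot see $\abs{k}$ at all. Your proposed remedy (separating high-frequency from bounded-frequency regimes) does not repair this, because the problematic configuration ($\abs{\xi}$ small, $\abs{k}$ large) \emph{is} high-frequency; note also that you do invoke both decaying powers for the constant summand, which is exactly the mechanism you would need --- but fail to use --- for the other two summands.

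There are two ways to close the gap. The paper's way: divide by the target weight $\omega_\beta(k,\xi)^{2(s+1/2-(1-\mu))}$ at the outset and, on $\{t\abs{k}\le\abs{\xi}\}$, use $\omega_\beta(k,\xi-tk)\le c\,\omega_\beta(k,\xi)$ (valid since $\abs{\xi-tk}\le2\abs{\xi}$ and $s+1\ge0$); the ratio left to integrate then carries the exponent $2(s+1)-2(s+1/2-(1-\mu))=3-2\mu\ge1$, which is positive \emph{regardless} of the sign of the target exponent, and its three summands pair with $\exp(-c\abs{\xi}^\beta t)$, $\exp(-c\abs{k}^\beta t^{\beta+1})$ and $\int_0^T t^{2-2\mu}\dx t$ respectively; on $\{t\abs{k}\ge\abs{\xi}\}$ one compares $\omega_\beta(k,\xi-tk)$ with $1+\abs{tk}^\beta+\abs{k}^{\frac{\beta}{\beta+1}}$ and splits according to the sign of $s+1/2-(1-\mu)$, reducing to integrals in $\abs{k}$ alone. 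Alternatively, your own scheme is repaired by a one-line change: keep \emph{both} exponential factors in every summand and bound each time integral by the minimum of the three evaluations, i.e.\ by $c\min\left(c(T),\abs{\xi}^{-\beta(3-2\mu)},\abs{k}^{-\frac{\beta}{\beta+1}(3-2\mu)}\right)\approx_T\omega_\beta(k,\xi)^{-(3-2\mu)}$; then, for instance, the $\abs{\xi-tk}$-summand on $\{t\abs{k}\le\abs{\xi}\}$ yields $\abs{\xi}^{2(s+1)\beta}\,\omega_\beta(k,\xi)^{-(3-2\mu)}\le\omega_\beta(k,\xi)^{2(s+1)}\,\omega_\beta(k,\xi)^{-(3-2\mu)}=\omega_\beta(k,\xi)^{2s-1+2\mu}$, using only $s+1\ge0$, and the remaining summands close in the same way.
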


\begin{proof}
	We are going to use several integral identities, which are collected and proven at the end of the proof. Let us prove the estimate from above first. We split the integral as follows
	\begin{align*}
		\eta_\beta(k,\xi) &= \int_0^{\frac{\abs{{\xi}}}{\abs{k}} \land T} t^{2-2\mu}\omega_\beta(k,\xi-tk)^{2(s+1)}  e_\beta(t,k,{\xi}-tk)^2   \dx t \\
		&\hphantom{=} + \int_{\frac{\abs{{\xi}}}{\abs{k}}  \land T}^T t^{2-2\mu}\omega_\beta(k,\xi-tk)^{2(s+1)}  e_\beta(t,k,{\xi}-tk)^2   \dx t =: I_1+I_2.
	\end{align*}
	Using Lemma \ref{lem:upperbounde} we estimate the first integral as
	\begin{align*}
		\omega_\beta(k,\xi)^{-2(s+1/2-(1-\mu))}I_1 &= \int_0^{\frac{\abs{{\xi}}}{\abs{k}} \land T} t^{2-2\mu}\frac{(1+\abs{{\xi}-tk}^{\beta}+ \abs{k}^\frac{\beta}{\beta+1})^{2(s+1)}}{(1+\abs{{\xi}}^{\beta}+ \abs{k}^\frac{\beta}{\beta+1})^{2(s+1/2-(1-\mu))}} e_\beta(t,k,{\xi}-tk)^2   \dx t  \\
		&\lesssim \int_0^{\frac{\abs{{\xi}}}{\abs{k}} \land T} t^{2-2\mu}{(1+\abs{{\xi}}^{\beta}+ \abs{k}^\frac{\beta}{\beta+1})^{2(1/2+(1-\mu))}} e_\beta(t,k,{\xi}-tk)^2   \dx t  \\
		&\lesssim  \int_0^\infty t^{2-2\mu} \abs{{\xi}}^{2(1/2+(1-\mu))\beta}  \exp(-c \abs{{\xi}}^\beta t)   \dx t \\
		&\hphantom{=}+\int_0^\infty t^{2-2\mu} \abs{k}^{2(1/2+(1-\mu))\frac{\beta}{\beta+1}}  \exp(-c \abs{{k}}^\beta t^{\beta+1})   \dx t \\ 
		&\hphantom{=}+ \int_0^T t^{2-2\mu} \dx t =  c(\beta,\mu,T).
	\end{align*}
	The second integral can be estimated as
	\begin{align*}
		\omega_\beta(k,\xi)^{-2(s+1/2-(1-\mu))}I_2 &= \int_{\frac{\abs{{\xi}}}{\abs{k}} \land T}^T t^{2-2\mu}\frac{(1+\abs{{\xi}-tk}^{\beta}+ \abs{k}^\frac{\beta}{\beta+1})^{2(s+1)}}{(1+\abs{{\xi}}^{\beta}+ \abs{k}^\frac{\beta}{\beta+1})^{2(s+1/2-(1-\mu))}} e_\beta(t,k,{\xi}-tk)^2   \dx t  \\
		&\lesssim \int_{\frac{\abs{{\xi}}}{\abs{k}} \land T}^T t^{2-2\mu}\frac{(1+\abs{tk}^{\beta}+ \abs{k}^\frac{\beta}{\beta+1})^{2(s+1)}}{(1+\abs{{\xi}}^{\beta}+ \abs{k}^\frac{\beta}{\beta+1})^{2(s+1/2-(1-\mu))}} e_\beta(t,k,{\xi}-tk)^2   \dx t =: I_3.
	\end{align*}
	If $s+1/2-(1-\mu) \ge 0$, we estimate 
	\begin{align*}
		I_3 \lesssim \int_0^T t^{2-2\mu}\frac{(1+\abs{tk}^{\beta}+ \abs{k}^\frac{\beta}{\beta+1})^{2(s+1)}}{(1+\abs{k}^\frac{\beta}{\beta+1})^{2(s+1/2-(1-\mu))}} e_\beta(t,k,{\xi}-tk)^2   \dx t = c(\beta,\mu,s,T),
	\end{align*}
	similarly as in the proof of Proposition \ref{prop:inhomogeneity}, and if $s+1/2-(1-\mu) < 0$ we estimate 
	\begin{align*}
		I_3 \lesssim \int_0^T t^{2-2\mu}{(1+\abs{tk}^{\beta}+ \abs{k}^\frac{\beta}{\beta+1})^{2(1/2+(1-\mu))}} e_\beta(t,k,{\xi}-tk)^2   \dx t = c(\beta,\mu,T).
	\end{align*}
	
	Let us now consider the estimate from below. We treat the case $\abs{\xi}^{\beta+1} \le \abs{k}$ first. If $t \ge \frac{\abs{\xi}}{\abs{k}}$, then we have $e_\beta(t,k,\xi-tk) \ge \exp(-c \abs{k}^{\beta}t^{\beta+1})$ and hence
	\begin{align*}
		\frac{\eta_\beta(k,\xi)}{\omega_\beta(k,\xi)^{2(s+1/2-(1-\mu))}} &\ge \int_{\frac{\abs{\xi}}{\abs{k}}}^\infty t^{2-2\mu}\frac{(1+\abs{k}^{\frac{\beta}{\beta+1}})^{2(s+1)}}{( 1+\abs{\xi}^\beta+ \abs{k}^\frac{\beta}{\beta+1})^{2(s+1/2-(1-\mu))}} \exp(-c \abs{k}^{\beta}t^{\beta+1}) \dx t \\
		&\gtrsim \abs{k}^{2(1/2+(1-\mu))\frac{\beta}{\beta+1}}  \int_{\frac{\abs{\xi}}{\abs{k}}}^\infty t^{2-2\mu} \exp(-c \abs{k}^{\beta}t^{\beta+1}) \dx t \\
		&= \int_{\frac{\abs{\xi}^{\beta+1}}{\abs{k}}}^\infty t^{(3-2\mu)\frac{\beta}{\beta+1}-1} \exp(-ct) \dx t \\
		&\ge  \int_{1}^\infty t^{(3-2\mu)\frac{\beta}{\beta+1}-1} \exp(-ct) \dx t = c(\beta,\mu),
	\end{align*}
	where we have used the assumption $\abs{\xi}^{\beta+1} \le \abs{k}$ if $s+1/2-(1-\mu) \ge 0$ and omitted the $\abs{\xi}^\beta$ term in the denominator if $s+1/2-(1-\mu)<0$. Assume now that $\abs{\xi}^{\beta+1} \ge \abs{k}$, then 
	\begin{align*}
		\frac{\eta_\beta(k,\xi)}{\omega_\beta(k,\xi)^{2(s+1/2-(1-\mu))}} &\ge \int_0^{\frac{\abs{\xi}}{\abs{k}}} t^{2-2\mu} \frac{(1+\abs{\xi}^{\beta})^{2(s+1)}}{( 1+\abs{\xi}^\beta+ \abs{k}^\frac{\beta}{\beta+1})^{2(s+1/2-(1-\mu))}} \exp(-c \abs{\xi}^{\beta}t) \dx t \\
		&\gtrsim {\abs{\xi}^{\beta+2(1-\mu)\beta}} \int_0^{\frac{\abs{\xi}}{\abs{k}}} t^{2-2\mu}  \exp(-c \abs{\xi}^{\beta}t) \dx t = \int_0^{\frac{\abs{\xi}^{\beta+1}}{\abs{k}}} r^{2-2\mu} \exp(-c r) \dx r \\
		&\ge  \int_0^{1} r^{2-2\mu} \exp(-c r) \dx r = c(\mu),
	\end{align*}
	where, in the second estimate, we have used the assumption $\abs{\xi}^{\beta+1} \ge \abs{k}$ if $s+1/2-(1-\mu) \ge 0$ and omitted the $\abs{k}^\frac{\beta}{\beta+1}$ term in the denominator if $s+1/2-(1-\mu)<0$. Moreover, we have used that $e_\beta(t,k,\xi-tk) \ge \exp(-c \abs{\xi}^{\beta}t)$ for all $0 \le t \le \frac{\abs{\xi}}{\abs{k}}$. 
		
	We have used the following integral identities: 
	\begin{align*}
		&\int_0^\infty t^{2-2\mu} \abs{{\xi}}^{2(1/2+(1-\mu))\beta}  \exp(-c \abs{{\xi}}^\beta t)   \dx t = \int_0^\infty r^{2-2\mu} \exp(-cr) \dx r = c(\mu), \\
		&\int_0^\infty t^{2-2\mu} \abs{k}^{2(1/2+(1-\mu))\frac{\beta}{\beta+1}}  \exp(-c \abs{{k}}^\beta t^{\beta+1}) \dx t = \int_0^\infty r^{2-2\mu} \exp(-cr^{\beta+1}) \dx r= c(\beta,\mu), \\
		&\int_0^\infty t^{2-2\mu}\abs{tk}^{2(1/2+(1-\mu))\beta} \exp(-c \abs{{k}}^\beta t^{\beta+1})   \dx t \\
		 &= \int_0^\infty r^{2-2\mu+2(1/2+(1-\mu))\beta} \exp(-cr^{\beta+1}) \dx r = c(\beta,\mu), \\
		&\int_0^\infty t^{2-2\mu}\abs{tk}^{2(s+1)\beta}\abs{k}^{-2(s+1/2-(1-\mu))\frac{\beta}{\beta+1}} \exp(-c \abs{{k}}^\beta t^{\beta+1})   \dx t \\
		&= \int_0^\infty r^{2-2\mu+2(s+1)\beta} \exp(-cr^{\beta+1}) \dx r = c(\beta,\mu,s).
	\end{align*}	
\end{proof}

\begin{prop} \label{prop:suffcondiv}
	Let $\beta \in (0,2]$, $\mu \in (\frac{1}{2},1]$, $T \in (0,\infty)$ and $s \ge -1$. If $f = 0$ and $g \in {X}^{s+1/2-(1-\mu)}_\beta$, then $u$ given by equation \eqref{eq:fouriersol} satisfies $u \in L^2_\mu((0,T); {X}^{s+1}_\beta)$ and the estimate
	\begin{equation*}
		\int_0^T \int_{\R^{2n}} t^{2-2\mu}\omega_\beta(k,\xi)^{2(s+1)} \abs{\hat{u}(t,k,\xi)}^2 \dx \xi \dx k \dx t \le c \int_{\R^{2n}} \omega_\beta(k,\xi)^{2(s+1/2-(1-\mu))}\abs{\hat{g}(k,\xi)}^2 \dx k \dx \xi
	\end{equation*}
	is satisfied for some constant $c=c(\beta,\mu,s,T)>0$. 
\end{prop}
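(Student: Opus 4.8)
The plan is to exploit the explicit solution formula together with the upper bound from Lemma~\ref{lem:esteta}; in fact all the real work has already been done there, and the proposition follows by a substitution and Fubini/Tonelli. Setting $f=0$ in \eqref{eq:fouriersol} gives $\hat u(t,k,\xi)=\hat g(k,\xi+tk)\,e_\beta(t,k,\xi)$, so by Plancherel's theorem and the $\omega_\beta$-characterization of the $X^{s+1}_\beta$-norm the left-hand side of the claimed estimate equals
\begin{equation*}
\int_0^T\int_{\R^{2n}} t^{2-2\mu}\,\omega_\beta(k,\xi)^{2(s+1)}\,\abs{\hat g(k,\xi+tk)}^2\,e_\beta(t,k,\xi)^2\,\dx k\,\dx\xi\,\dx t .
\end{equation*}
First I would, for each fixed $t$ and $k$, perform the change of variables that replaces the shifted argument $\xi+tk$ of $\hat g$ by a new integration variable, again called $\xi$; this has unit Jacobian and turns the old $\xi$ into $\xi-tk$, so the integrand becomes $t^{2-2\mu}\,\omega_\beta(k,\xi-tk)^{2(s+1)}\,\abs{\hat g(k,\xi)}^2\,e_\beta(t,k,\xi-tk)^2$.

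Since the integrand is nonnegative and measurable, Tonelli's theorem lets me interchange the $t$-integration with the $(k,\xi)$-integration. The resulting inner $t$-integral is then precisely the function $\eta_\beta(k,\xi)$ introduced in Lemma~\ref{lem:esteta}, namely
\begin{equation*}
\int_0^T t^{2-2\mu}\,\omega_\beta(k,\xi-tk)^{2(s+1)}\,e_\beta(t,k,\xi-tk)^2\,\dx t=\eta_\beta(k,\xi).
\end{equation*}
Applying the upper bound $\eta_\beta(k,\xi)\le c_1\,\omega_\beta(k,\xi)^{2(s+1/2-(1-\mu))}$ of Lemma~\ref{lem:esteta}, which is valid exactly because $T<\infty$ and $s\ge-1$, immediately yields
\begin{equation*}
\int_0^T\int_{\R^{2n}} t^{2-2\mu}\,\omega_\beta(k,\xi)^{2(s+1)}\,\abs{\hat u(t,k,\xi)}^2\,\dx k\,\dx\xi\,\dx t\le c_1\int_{\R^{2n}}\omega_\beta(k,\xi)^{2(s+1/2-(1-\mu))}\,\abs{\hat g(k,\xi)}^2\,\dx k\,\dx\xi,
\end{equation*}
which is the asserted estimate with $c=c_1(\beta,\mu,s,T)$. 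The finiteness of the right-hand side is exactly the hypothesis $g\in X^{s+1/2-(1-\mu)}_\beta$, and finiteness of the left-hand side then gives $u\in L^2_\mu((0,T);X^{s+1}_\beta)$, again via Plancherel.

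Unlike Proposition~\ref{prop:inhomogeneity}, there is essentially no analytic difficulty remaining in this argument, since the whole burden of controlling the interplay between the decay of $e_\beta$ and the growth of $\omega_\beta$ under the shift $\xi\mapsto\xi-tk$ is absorbed into the definition and estimate of $\eta_\beta$. The only points that genuinely need checking are the legitimacy of the substitution and of the interchange of integrals; both are automatic here because everything in sight is nonnegative and measurable. The main (minor) obstacle is therefore purely bookkeeping: tracking the shift $\xi\mapsto\xi-tk$ consistently inside both $\omega_\beta$ and $e_\beta$ so that the inner integral is recognized verbatim as $\eta_\beta(k,\xi)$. As in the inhomogeneous case, one first establishes the estimate for Schwartz data $g$ and then passes to general $g\in X^{s+1/2-(1-\mu)}_\beta$ by the standing approximation argument.
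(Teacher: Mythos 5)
Your proof is correct and follows essentially the same route as the paper: plug $f=0$ into \eqref{eq:fouriersol}, substitute $\xi\mapsto\xi-tk$, interchange the order of integration to recognize the inner $t$-integral as $\eta_\beta(k,\xi)$, and invoke the upper bound of Lemma~\ref{lem:esteta}. The only difference is that you spell out the Tonelli justification and the approximation step, which the paper leaves implicit.
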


\begin{proof}
	We have
	\begin{align*}
		&\int_0^T \int_{\R^n} \int_{\R^n} t^{2-2\mu} \omega_\beta(k,\xi)^{2(s+1)} \abs{\hat{u}(t,k,\xi)}^2 \dx \xi \dx k \dx t \\
		&= \int_0^T \int_{\R^n} \int_{\R^n} t^{2-2\mu}\omega_\beta(k,\xi)^{2(s+1)} \abs{\hat{g}(k,\xi + tk)}^2 e_\beta(t,k,\xi)^2  \dx \xi \dx k \dx t \\
		&= \int_0^T \int_{\R^n} \int_{\R^n} t^{2-2\mu}\omega_\beta(k,\xi-tk)^{2(s+1)} \abs{\hat{g}(k,{\xi})}^2 e_\beta(t,k,{\xi}-tk)^2  \dx {\xi} \dx k \dx t \\
		&= \int_{\R^n} \int_{\R^n} \abs{\hat{g}(k,{\xi})}^2 \eta_\beta(k,{\xi}) \dx {\xi} \dx k,
	\end{align*}
	where
	\begin{equation*}
		\eta_\beta(k,\xi) = \int_0^T t^{2-2\mu}\omega_\beta(k,\xi-tk)^{2(s+1)}  e_\beta(t,k,{\xi}-tk)^2 \dx t 
	\end{equation*}
	is the same function as in Lemma \ref{lem:esteta}. We know already that $\eta_\beta(k,\xi) \lesssim \omega_\beta(k,\xi)^{2(s+1/2-(1-\mu))}$, which shows the claim. 
\end{proof}

Concerning necessary conditions on the initial value we are able to deduce the following result in the case $T = \infty$. We will later see that this result holds true in the case of finite $T$, too. 

\begin{prop} \label{prop:necessarycondiv}
	Let $\beta \in (0,2]$, $\mu \in (\frac{1}{2},1]$ and $s \ge -1$. Let $u$ be given by equation \eqref{eq:fouriersol} with $f = 0$. If $u \in L^2_\mu((0,\infty);  {X}_\beta^{s+1})$, then $g\in  {X}_\beta^{s+1/2-(1-\mu)}$,
	and the estimate
	\begin{equation*}
		 \int_{\R^{2n}} \omega_\beta(k,\xi)^{2(s+1/2-(1-\mu))}\abs{\hat{g}(k,\xi)}^2 \dx k \dx \xi \le c\int_0^\infty \int_{\R^{2n}} t^{2-2\mu}\omega_\beta(k,\xi)^{2(s+1)} \abs{\hat{u}(t,k,\xi)}^2 \dx k \dx \xi \dx t
	\end{equation*}
	holds true for some constant $c=c(\beta,\mu)>0$.
\end{prop}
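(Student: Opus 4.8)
The plan is to exploit the fact that the identity at the heart of Proposition \ref{prop:suffcondiv} is an exact \emph{equality}, so that the lower bound for $\eta_\beta$ established in Lemma \ref{lem:esteta} immediately produces the reverse estimate. First I would reproduce verbatim the chain of equalities from the proof of Proposition \ref{prop:suffcondiv}: insert the representation \eqref{eq:fouriersol} with $f=0$ into the weighted norm of $u$, apply Plancherel's theorem, perform the substitution $\xi \mapsto \xi - tk$ in the inner integral, and use Fubini's theorem to interchange the $t$- and $(k,\xi)$-integrations. Since every one of these steps is an equality, this yields the exact identity
\begin{equation*}
	\int_0^\infty \int_{\R^{2n}} t^{2-2\mu}\omega_\beta(k,\xi)^{2(s+1)} \abs{\hat{u}(t,k,\xi)}^2 \dx k \dx \xi \dx t = \int_{\R^{2n}} \abs{\hat{g}(k,\xi)}^2 \eta_\beta(k,\xi) \dx k \dx \xi,
\end{equation*}
where $\eta_\beta$ is precisely the function studied in Lemma \ref{lem:esteta} with $T=\infty$.

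The second step is to invoke the lower bound from Lemma \ref{lem:esteta}, which is available exactly in the regime $T=\infty$: there one has $\eta_\beta(k,\xi) \ge c_2\, \omega_\beta(k,\xi)^{2(s+1/2-(1-\mu))}$ with $c_2 = c_2(\beta,\mu)>0$. Substituting this into the right-hand side of the identity gives
\begin{equation*}
	c_2 \int_{\R^{2n}} \omega_\beta(k,\xi)^{2(s+1/2-(1-\mu))} \abs{\hat{g}(k,\xi)}^2 \dx k \dx \xi \le \int_0^\infty \int_{\R^{2n}} t^{2-2\mu}\omega_\beta(k,\xi)^{2(s+1)} \abs{\hat{u}(t,k,\xi)}^2 \dx k \dx \xi \dx t,
\end{equation*}
which is the claimed estimate with $c = 1/c_2$. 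Since the right-hand side is finite by the hypothesis $u \in L^2_\mu((0,\infty);X_\beta^{s+1})$, the left-hand side is finite as well, and by Plancherel's characterization of the $X_\beta^{s+1/2-(1-\mu)}$ norm this is precisely the statement $g \in X_\beta^{s+1/2-(1-\mu)}$.

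There is no genuine analytic obstacle remaining at this stage: all the work has been front-loaded into the two-sided control of $\eta_\beta$ in Lemma \ref{lem:esteta}, and the present proposition is simply the dual reading of the equality used for the sufficient condition, with the upper bound on $\eta_\beta$ replaced by the lower bound. The one point I would flag is that the lower bound genuinely requires $T=\infty$: it is the integration over the tail $[\abs{\xi}/\abs{k},\infty)$ (respectively over the full half-line in the case $\abs{\xi}^{\beta+1}\ge\abs{k}$) that generates the matching power of $\omega_\beta$, so a finite truncation would not suffice. This is the structural reason the proposition is restricted to $T=\infty$ here; the finite-$T$ version is recovered later by a separate argument. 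As usual, the identity is first verified for Schwartz data and then extended by the approximation argument mentioned in the preliminaries.
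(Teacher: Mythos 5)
Your proposal is correct and is exactly the paper's argument: the paper's one-line proof says to argue as in Proposition \ref{prop:suffcondiv}, obtaining the exact identity $\int_0^\infty \int_{\R^{2n}} t^{2-2\mu}\omega_\beta^{2(s+1)} \abs{\hat{u}}^2 = \int_{\R^{2n}} \abs{\hat{g}}^2 \eta_\beta$, and then to invoke the lower bound on $\eta_\beta$ from Lemma \ref{lem:esteta}, which is precisely what you do. Your additional remark about why the lower bound forces $T=\infty$ is a correct reading of the structure as well.
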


\begin{proof}
	Arguing as in the proof of Proposition \ref{prop:suffcondiv} it suffices to use the estimate from below on $\eta_\beta(k,\xi)$ given in Lemma \ref{lem:esteta}. 
\end{proof}

\begin{remark}
	Analogous versions of Proposition \ref{prop:inhomogeneity}, Lemma \ref{lem:esteta}, Proposition \ref{prop:suffcondiv} and Proposition \ref{prop:necessarycondiv} can also be proven in the homogeneous setting. In particular, the estimate from above is then satisfied for all $T \in (0,\infty]$ with a constant independent of $T$.
\end{remark}

Instead of considering the spaces $ {X}_\beta^s$ one can also consider only the regularity in the position variable $x$, i.e. the space $H_x^s(\R^{2n})$.

\begin{prop} \label{prop:condivx}
	Let $\beta \in (0,2]$, $\mu \in (\frac{1}{2},1]$, $s \ge -1$ and $T \in (0,\infty)$. If $f = 0$ and 
	$g \in {H}^{(s+1/2-(1-\mu))\frac{\beta}{\beta+1}}_x(\R^{2n})$, then $u$ given by equation \eqref{eq:fouriersol} satisfies $u \in L^2_\mu((0,T);{H}^{(s+1)\frac{\beta}{\beta+1}}_x(\R^{2n}))$ and a corresponding estimate is satisfied. Conversely, if $u$ is given by equation \eqref{eq:fouriersol} with $f = 0$, then, if $u \in L^2_\mu((0,\infty); {H}^{(s+1)\frac{\beta}{\beta+1}}_x(\R^{2n}))$, it follows that $g\in {H}^{(s+1/2-(1-\mu))\frac{\beta}{\beta+1}}_x(\R^{2n})$ with the related estimate. 
\end{prop}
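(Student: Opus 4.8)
The plan is to prove both implications by mirroring the arguments of Propositions \ref{prop:suffcondiv} and \ref{prop:necessarycondiv}, exploiting the simplification that the $H_x$-multiplier depends only on the position frequency $k$. Writing $\hat u(t,k,\xi)=\hat g(k,\xi+tk)\,e_\beta(t,k,\xi)$ (the case $f=0$ of \eqref{eq:fouriersol}) and recalling that, by Plancherel, the squared norm of $H_x^{(s+1)\frac{\beta}{\beta+1}}(\R^{2n})$ is comparable to the multiplier $\omega_x(k)^{2(s+1)}$, where $\omega_x(k):=1+\abs{k}^{\frac{\beta}{\beta+1}}$, I would substitute $\xi\mapsto\xi-tk$ to obtain
\begin{equation*}
	\int_0^T\!\!\int_{\R^{2n}} t^{2-2\mu}\omega_x(k)^{2(s+1)}\abs{\hat u(t,k,\xi)}^2\,\dx k\,\dx\xi\,\dx t=\int_{\R^{2n}}\abs{\hat g(k,\xi)}^2\,\omega_x(k)^{2(s+1)}J(k,\xi)\,\dx k\,\dx\xi,\qquad J(k,\xi):=\int_0^T t^{2-2\mu}e_\beta(t,k,\xi-tk)^2\,\dx t.
\end{equation*}
Since $\omega_x(k)$ carries no $t$- or $\xi$-dependence it factors out of the time integral, so both directions reduce to two-sided control of the scalar quantity $J(k,\xi)$ by $\omega_x(k)^{-(3-2\mu)}$, using the arithmetic identity $2(s+1/2-(1-\mu))-2(s+1)=-(3-2\mu)$.

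For the sufficient condition I would bound $J$ from above. By Lemma \ref{lem:upperbounde} one has $e_\beta(t,k,\xi-tk)\le\exp(-c_2\abs{\xi}^\beta t-c_2\abs{k}^\beta t^{\beta+1})$; discarding the $\abs{\xi}$-term, extending the integral to $(0,\infty)$, and substituting $r=\abs{k}^\beta t^{\beta+1}$ produces a Gamma-type integral and the bound $J(k,\xi)\lesssim\abs{k}^{-(3-2\mu)\frac{\beta}{\beta+1}}$ for $\abs{k}\ge 1$, while the trivial bound $J(k,\xi)\le\int_0^T t^{2-2\mu}\,\dx t=c(\mu,T)$ handles $\abs{k}\le 1$. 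Combining the two gives $J(k,\xi)\lesssim\omega_x(k)^{-(3-2\mu)}$ uniformly in $\xi$, whence $u\in L^2_\mu((0,T);H_x^{(s+1)\frac{\beta}{\beta+1}})$ together with the corresponding estimate. This part is essentially routine and does not even require the delicate case analysis of Lemma \ref{lem:esteta}.

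The converse is the heart of the matter, and here I would seek the matching lower bound $J(k,\xi)\gtrsim\omega_x(k)^{-(3-2\mu)}$ in the case $T=\infty$, following the strategy of the lower estimate in Lemma \ref{lem:esteta}. Using $e_\beta(t,k,\xi-tk)\ge\exp(-c_1\abs{\xi}^\beta t-c_1\abs{k}^\beta t^{\beta+1})$ and restricting the time integral to $t\approx\abs{k}^{-\frac{\beta}{\beta+1}}$, so that $\abs{k}^\beta t^{\beta+1}$ stays bounded, one recovers $J(k,\xi)\gtrsim\abs{k}^{-(3-2\mu)\frac{\beta}{\beta+1}}\approx\omega_x(k)^{-(3-2\mu)}$ precisely in the regime $\abs{\xi}^{\beta+1}\lesssim\abs{k}$, where the factor $\exp(-c_1\abs{\xi}^\beta t)$ remains bounded below on that time window. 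Plugging this into the identity above then yields the reverse estimate and $g\in H_x^{(s+1/2-(1-\mu))\frac{\beta}{\beta+1}}$.

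The step I expect to be the main obstacle is exactly the complementary high-velocity-frequency regime $\abs{\xi}^{\beta+1}\gg\abs{k}$. There $e_\beta(t,k,\xi-tk)$ behaves like $\exp(-c\abs{\xi}^\beta t)$, so $J(k,\xi)$ decays in $\abs{\xi}$ and is not bounded below by the $\xi$-independent quantity $\omega_x(k)^{-(3-2\mu)}$; this is the price of having replaced the full anisotropic weight $\omega_\beta(k,\xi-tk)$ of Lemma \ref{lem:esteta} — which encodes the velocity frequency and supplies the large contribution near $t=0$ — by the position-only multiplier $\omega_x(k)$. Consequently a purely pointwise frequency estimate will not close the necessity, and one must concentrate the argument on the good region $\abs{\xi}^{\beta+1}\lesssim\abs{k}$ where the lower bound on $J$ genuinely holds, or otherwise bring in the velocity regularity of $u$ transported by the equation. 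Making this reduction precise is, in my view, the crux of the proof.
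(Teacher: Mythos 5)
Your sufficiency half is correct and coincides with the paper's own argument: the paper proves Proposition \ref{prop:condivx} by the single remark that one may replace $\omega_\beta(k,\xi)$ by $\omega_x(k):=1+\abs{k}^{\frac{\beta}{\beta+1}}$ in Lemma \ref{lem:esteta} and Propositions \ref{prop:suffcondiv}, \ref{prop:necessarycondiv}, and for the estimate from \emph{above} this replacement is indeed harmless — your computation (discard the $\abs{\xi}$-term, integrate the $\exp(-c\abs{k}^\beta t^{\beta+1})$ tail, treat $\abs{k}\le 1$ trivially) is exactly what ``the proofs continue to hold'' amounts to there.

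The converse is the part your proposal does not prove, and your diagnosis of why is not merely an obstacle to your own strategy: it is a genuine defect of the paper's proof, and in fact of the statement itself. The lower bound of Lemma \ref{lem:esteta} survives in the regime $\abs{\xi}^{\beta+1}\ge\abs{k}$ only because, for $t\lesssim\abs{\xi}/\abs{k}$, the weight $\omega_\beta(k,\xi-tk)^{2(s+1)}$ supplies a factor comparable to $(1+\abs{\xi}^\beta)^{2(s+1)}$ which offsets the decay $\exp(-c\abs{\xi}^\beta t)$; the position-only multiplier supplies no such factor, and, as you observe via Lemma \ref{lem:upperbounde}, one has $J(k,\xi)\le C\abs{\xi}^{-\beta(3-2\mu)}$, so the required pointwise bound $J(k,\xi)\gtrsim\omega_x(k)^{-(3-2\mu)}$ fails whenever $\abs{\xi}^{\beta+1}\gg\abs{k}$. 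Moreover, no reduction to the ``good region'' can rescue the implication, because the failure is realized by actual initial data: take
\begin{equation*}
	\hat g(k,\xi)=h(\abs{k})\,\mathds{1}_{\{\abs{\xi-\xi_0(k)}\le 1\}},\qquad \abs{\xi_0(k)}=\abs{k}^a,\qquad a>\tfrac{1}{\beta+1},
\end{equation*}
with $h$ supported in $\abs{k}\ge 2$. On this support $J(k,\xi)\approx\abs{\xi}^{-\beta(3-2\mu)}\approx\abs{k}^{-a\beta(3-2\mu)}$ (upper bound as above; lower bound by restricting the $t$-integral to $t\le\abs{\xi}^{-\beta}$, where both exponents in Lemma \ref{lem:upperbounde} stay bounded since $\abs{\xi}^{\beta+1}\ge\abs{k}$). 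Hence, with $\sigma=(s+\tfrac12-(1-\mu))\tfrac{\beta}{\beta+1}$ and $\delta=(a-\tfrac{1}{\beta+1})\beta(3-2\mu)>0$, your identity for the weighted norm gives
\begin{equation*}
	\norm{u}^2_{L^2_\mu((0,\infty);H_x^{(s+1)\frac{\beta}{\beta+1}})}\approx\int h^2\abs{k}^{2\sigma-\delta}\dx k,
	\qquad
	\norm{g}^2_{H_x^{\sigma}}\approx\int h^2\abs{k}^{2\sigma}\dx k,
\end{equation*}
and the choice $h^2(\abs{k})=\abs{k}^{-n-2\sigma}(\log\abs{k})^{-1}$ makes the left-hand quantity finite while the right-hand one diverges. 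So the converse implication (and the associated estimate) is false as stated: finiteness of $\norm{u}_{L^2_\mu((0,\infty);H_x^{(s+1)\beta/(\beta+1)})}$ characterizes the anisotropic condition $\int \omega_x(k)^{2(s+1)}\omega_\beta(k,\xi)^{-(3-2\mu)}\abs{\hat g}^2\dx k\dx\xi<\infty$, which agrees with $g\in H_x^{\sigma}(\R^{2n})$ only on frequencies with $\abs{\xi}^{\beta+1}\lesssim\abs{k}$. Your proposal is therefore incomplete, but the missing half cannot be supplied by any argument: the paper's one-line proof breaks at precisely the point you singled out, and a correct converse requires either a Fourier-support restriction of this type or an additional hypothesis controlling the velocity regularity of $u$, as in Proposition \ref{prop:necessarycondiv}.
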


\begin{proof}
	Replacing $\omega_\beta(k,\xi)$ by $1+\abs{k}^\frac{\beta}{\beta+1}$ the proofs of Lemma \ref{lem:esteta}, Proposition \ref{prop:suffcondiv} and Proposition \ref{prop:necessarycondiv} continue to hold.
\end{proof}

\section{Kinetic maximal $L^2_\mu(X_\beta^s)$-regularity}
\label{sec:maxkinL2}
As sketched in Remark \ref{sec:optsob} we cannot hope that solutions of the Kolmogorov equation satisfy the maximal $L^2$-regularity property. In this section we will introduce the concept of kinetic maximal $L^2(X_\beta^s)$-regularity with temporal weights and prove that this property is satisfied for the Kolmogorov equation. 

Let $T \in (0,\infty]$, $\beta \in (0,2]$, $s \ge -\frac{1}{2}$, $\mu \in (\frac{1}{2},1]$. Of particular interest are the two cases $s = -\frac{1}{2}$ and $s = 0$, which correspond to weak and strong solutions, respectively. To describe the $L^2$ regularity of the transport term we introduce the vector space
\begin{equation*}
	\T_\mu(X_\beta^s) =\T_\mu((0,T);X_\beta^s) = \{ u \in L^2_\mu((0,T);X_\beta^s) \; \colon \partial_t u + v \cdot \nabla_x u \in L^2_\mu((0,T);X_\beta^s)  \}
\end{equation*} 
equipped with the norm $\norm{u}_{\T_\mu(X_\beta^s) } = \norm{u}_{2,\mu,X_\beta^s}+ \norm{\partial_t u + v \cdot \nabla_x u}_{2,\mu,X_\beta^s}$. To describe the spatial regularity of solutions to the Kolmogorov equation we consider the space
\begin{equation*}
	L^2_\mu(X_\beta^{s+1}) := L^2_\mu((0,T);X_\beta^{s+1}).
\end{equation*}
If $\mu = 1$ we will drop the subscript $\mu$ in the notation of the spaces.

Let us introduce the mapping $\Gamma$ acting on functions $u \colon \R\times \R^{2n} \to \R$ defined by 
$$[\Gamma u](t,x,v) = u(t,x+tv,v) $$ 
for all $t\in \R$ and any $x,v \in \R^n$. For functions $u \colon \R^{2n} \to \R$ we write $[\Gamma(t)u](x,v) = u(x+tv,v)$ for all $t \in \R$. Note that $\Gamma(t)\Gamma(-t)u=\Gamma(-t) \Gamma(t)u=u$ for all functions $u \colon \R^{2n} \to \R$.
The transformation $\Gamma$ is a key tool in our analysis of traces of functions in $\T_\mu(X_\beta^s)$. 

\begin{prop} \label{prop:GamL2isom}
	The mapping 
	\begin{equation*}
		\Gamma \colon L^2_\mu((0,T); L^2(\R^{2n})) \to L^2_\mu((0,T); L^2(\R^{2n})) 
	\end{equation*}
	is a well-defined isometric isomorphism. The inverse operator is given by $[\Gamma^{-1}u](t,x,v) = u(t,x-tv,v) $. Moreover, the group of isometries $\Gamma(t) \colon L^2(\R^{2n}) \to L^2(\R^{2n})$ is strongly continuous. The same holds true for the inverse group $(\Gamma^{-1}(t))_{t \in \R}$.
	\end{prop}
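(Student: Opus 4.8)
The plan is to establish each claim of Proposition \ref{prop:GamL2isom} in turn, starting with the pointwise action of $\Gamma$ and then upgrading to the functional-analytic statements. First I would verify that $\Gamma$ is well-defined and isometric on $L^2_\mu((0,T);L^2(\R^{2n}))$. For this, observe that for each fixed $t$ the map $(x,v)\mapsto(x+tv,v)$ is a volume-preserving (measure-preserving) linear change of variables on $\R^{2n}$, since its Jacobian determinant is $1$: writing the transformation as a block upper-triangular matrix with identity blocks on the diagonal and $t\,\mathrm{Id}$ in the off-diagonal $x$-$v$ block, the determinant is clearly $1$. Hence for almost every $t$ the change of variables $y=x+tv$ gives $\norm{[\Gamma(t)u(t)]}_{L^2(\R^{2n})}=\norm{u(t)}_{L^2(\R^{2n})}$. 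Multiplying by the temporal weight $t^{2-2\mu}$ (which only depends on $t$ and is untouched by the spatial substitution) and integrating over $(0,T)$ yields $\norm{\Gamma u}_{2,\mu,L^2}=\norm{u}_{2,\mu,L^2}$, so $\Gamma$ is a linear isometry.

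Next I would check that $\Gamma$ is bijective with the stated inverse. A direct computation gives $[\Gamma^{-1}\Gamma u](t,x,v)=[\Gamma u](t,x-tv,v)=u(t,(x-tv)+tv,v)=u(t,x,v)$, and symmetrically $\Gamma\Gamma^{-1}=\mathrm{Id}$, using the identity $\Gamma(t)\Gamma(-t)=\Gamma(-t)\Gamma(t)=\mathrm{Id}$ already noted before the statement. Since $\Gamma^{-1}$ is built from the measure-preserving map $(x,v)\mapsto(x-tv,v)$, the same computation as above shows it is an isometry too. Thus $\Gamma$ is an isometric isomorphism with isometric inverse.

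It remains to prove strong continuity of the isometry group $(\Gamma(t))_{t\in\R}$ on $L^2(\R^{2n})$, i.e.\ that $t\mapsto\Gamma(t)u$ is continuous for each fixed $u\in L^2(\R^{2n})$. The clean route is a standard density argument. For $u$ a smooth, compactly supported (or Schwartz) function, the map $(t,x,v)\mapsto u(x+tv,v)$ is continuous and one controls $\norm{\Gamma(t)u-\Gamma(t_0)u}_{L^2}$ directly, say by dominated convergence together with the uniform continuity of $u$. For general $u\in L^2(\R^{2n})$, pick $\varphi\in C_c^\infty(\R^{2n})$ with $\norm{u-\varphi}_{L^2}<\epsilon$ and split
\begin{equation*}
	\norm{\Gamma(t)u-\Gamma(t_0)u}_{L^2}\le\norm{\Gamma(t)(u-\varphi)}_{L^2}+\norm{\Gamma(t)\varphi-\Gamma(t_0)\varphi}_{L^2}+\norm{\Gamma(t_0)(\varphi-u)}_{L^2}.
\end{equation*}
The outer two terms are each $<\epsilon$ by the isometry property, and the middle term tends to $0$ as $t\to t_0$ by the smooth case, giving strong continuity. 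The same argument applies verbatim to $(\Gamma^{-1}(t))_{t\in\R}=(\Gamma(-t))_{t\in\R}$.

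The main obstacle I anticipate is purely technical rather than conceptual: making the density/approximation step for strong continuity rigorous, in particular justifying the limit for smooth $u$ (where one must handle that the shift $tv$ grows with $v$, so a crude uniform-continuity bound is not integrable and one instead relies on dominated convergence with the integrable majorant supplied by $u\in L^2$). Everything else — the unit Jacobian, the inverse identity, and the isometry on the weighted space — is a routine consequence of the measure-preserving change of variables and the fact that the temporal weight is inert under the spatial substitution.
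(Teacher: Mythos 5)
Your proof is correct, and for the isometry and the inverse it follows exactly the paper's route: the unit-Jacobian substitution $\tilde{x}=x+tv$ combined with Fubini's theorem (the temporal weight being untouched by the spatial change of variables), and the elementary computation $\Gamma\Gamma^{-1}=\Gamma^{-1}\Gamma=\mathrm{Id}$. Where you genuinely diverge is the strong continuity of the group $(\Gamma(t))_{t\in\R}$: the paper does not prove this at all but simply cites Proposition 2.2 of Metafune's paper on the $L^p$-spectrum of Ornstein--Uhlenbeck operators, whereas you give a self-contained density argument (approximation by $C_c^\infty$, a three-term splitting controlled by the isometry property, and convergence in the smooth case). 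Your route makes the proposition independent of external literature, which is a mild gain in readability; note also that your worry about the shift $tv$ growing in $v$ is unnecessary in the smooth step, since for $\varphi\in C_c^\infty(\R^{2n})$ the supports of $\Gamma(t)\varphi$ for $t$ near $t_0$ lie in a common compact set on which $\abs{v}$ is bounded, so plain uniform continuity of $\varphi$ (or, as you say, dominated convergence with majorant $4\norm{\varphi}_\infty^2$ times the indicator of that compact set) closes the argument. Either way, the proof is complete and the conclusion, including the statement for the inverse group $\Gamma^{-1}(t)=\Gamma(-t)$, is justified.
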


\begin{proof}
	Isometry follows by using Fubini's theorem and substituting $\tilde{x} = x +tv$. It is clear that $[\Gamma\Gamma^{-1}u](t,x,v) = u(t,x,v) = [\Gamma^{-1} \Gamma u](t,x,v)$. Finally, a proof of the strong continuity property can be found for example in \cite[Proposition 2.2]{metafune_lp-spectrum_2001}. 
\end{proof}

The proof of the latter proposition breaks down when replacing $L^2(\R^{2n})$ with $X^s_\beta$ for any $s \neq 0$ as $\Gamma$ and $D_v^s$ do not commute in case $s \in \R \setminus \{ 0 \}$. To be more precise, the calculation $\nabla_v \Gamma(t) u = \Gamma(t)\nabla_v u+t \Gamma(t) \nabla_x u$ suggests that one could consider the space $H^1(\R^{2n})$. We note that $ X^s_\beta \subset H^{-1}(\R^{2n}) $ for any $s \ge -1/2$, whence it seems natural to study the existence of a trace for functions in $\T_\mu(H^{-1}(\R^{2n}))$.
 
\begin{lemma} \label{lem:c0Z}
	Let $s \in \Z$ and $T \in (0,\infty)$. The mapping
	\begin{equation*}
		\Gamma \colon L^2_\mu((0,T);H^s(\R^{2n})) \to L^2_\mu((0,T);H^s(\R^{2n}))
	\end{equation*}
	is a well-defined unitary isomorphism. Furthermore, the group $\Gamma(t) \colon H^s(\R^{2n}) \to H^s(\R^{2n})$ is strongly continuous and the same holds true for the inverse group $(\Gamma^{-1}(t))_{t \in \R}$.
\end{lemma}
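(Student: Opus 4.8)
The plan is to bootstrap from the $L^2$ case, which is exactly Proposition \ref{prop:GamL2isom}, up to all integer $s$, exploiting how $\Gamma(t)$ interacts with differentiation. On the Fourier side one has $\widehat{\Gamma(t)u}(k,\xi)=\hat u(k,\xi-tk)$, so $\Gamma(t)$ acts as a shear in the frequency variables; equivalently, on the physical side the relations $\partial_{x_i}\Gamma(t)=\Gamma(t)\partial_{x_i}$ and $\partial_{v_j}\Gamma(t)=\Gamma(t)(\partial_{v_j}+t\,\partial_{x_j})$ hold. Since $\Gamma$ is already known to be an isometric isomorphism on $L^2_\mu((0,T);L^2(\R^{2n}))$ with inverse $\Gamma^{-1}$, it suffices to prove that for each fixed integer $s$ the operator $\Gamma(t)\colon H^s(\R^{2n})\to H^s(\R^{2n})$ is bounded, uniformly for $t\in(0,T)$, and likewise for $\Gamma(-t)=\Gamma^{-1}(t)$; integrating the resulting pointwise bound against the weight $t^{2-2\mu}$ then yields boundedness of $\Gamma$ and $\Gamma^{-1}$ on $L^2_\mu((0,T);H^s(\R^{2n}))$, and the identities $\Gamma\Gamma^{-1}=\Gamma^{-1}\Gamma=\id$ give the isomorphism. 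At $s=0$ this is precisely the isometric statement of Proposition \ref{prop:GamL2isom}.

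For $s\ge 1$ I would argue as follows. Using the norm $\norm{u}_{H^s}^2\approx\sum_{\abs{\alpha}\le s}\norm{\partial^\alpha u}_{L^2}^2$ and iterating the two commutation relations, every $\partial^\alpha\Gamma(t)$ with $\abs{\alpha}\le s$ can be written as $\Gamma(t)P_\alpha(t)$, where $P_\alpha(t)$ is a differential operator of order $\abs{\alpha}$ whose coefficients are polynomials in $t$ of degree at most $\abs{\alpha}$. Because $\Gamma(t)$ is an $L^2$-isometry, $\norm{\partial^\alpha\Gamma(t)u}_{L^2}=\norm{P_\alpha(t)u}_{L^2}\lesssim(1+\abs{t})^{s}\norm{u}_{H^s}$, and summing over $\alpha$ gives $\norm{\Gamma(t)u}_{H^s}\le c(T)\norm{u}_{H^s}$ for all $t\in(0,T)$. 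The same estimate holds for $\Gamma(-t)$, since replacing $t$ by $-t$ only changes the (still bounded) coefficients. Joint measurability of $t\mapsto[\Gamma u](t)=\Gamma(t)u(t)$ follows from the strong continuity proved below together with measurability of $u$, by approximating $u$ with $H^s$-valued step functions.

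For $s\le -1$ I would pass to the adjoint. Writing $m=-s\ge 1$ and using $H^{-m}(\R^{2n})=\big(H^m(\R^{2n})\big)'$ with respect to the $L^2$-pairing, the fact that $\Gamma(t)$ is unitary on $L^2$ gives $\Gamma(t)^*=\Gamma(-t)$ there; hence for $u\in H^{-m}$ one defines $\langle\Gamma(t)u,\phi\rangle:=\langle u,\Gamma(-t)\phi\rangle$ for $\phi\in H^m$, which agrees with the pointwise action on Schwartz functions. Since $\Gamma(-t)\colon H^m\to H^m$ is bounded by the previous step, this functional is bounded with $\norm{\Gamma(t)u}_{H^{-m}}\le c(T)\norm{u}_{H^{-m}}$, uniformly for $t\in(0,T)$; the same applies to $\Gamma(-t)$, and integration in $t$ finishes the boundedness on $L^2_\mu((0,T);H^{-m}(\R^{2n}))$.

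Finally, for the strong continuity of $(\Gamma(t))_{t\in\R}$ on $H^s$ I would use the group law $\Gamma(t)\Gamma(r)=\Gamma(t+r)$ to reduce continuity at an arbitrary $t_0$ to continuity at $0$, via $\Gamma(t)u-\Gamma(t_0)u=\Gamma(t_0)(\Gamma(t-t_0)u-u)$ and the uniform bound on $\norm{\Gamma(t_0)}_{H^s\to H^s}$. Continuity at $0$ I would prove by a density argument: Schwartz functions are dense in $H^s$ for every $s\in\R$, on a Schwartz function $\phi$ the convergence $\norm{\Gamma(t)\phi-\phi}_{H^s}\to0$ as $t\to0$ follows from $\widehat{\Gamma(t)\phi}(k,\xi)=\hat\phi(k,\xi-tk)$ by dominated convergence (the weight is polynomial and $\hat\phi$ decays rapidly, uniformly for $\abs{t}\le1$), and the uniform bound $\sup_{\abs{t}\le1}\norm{\Gamma(t)}_{H^s\to H^s}<\infty$ from the first two steps upgrades this to all $u\in H^s$ by the usual three-$\epsilon$ estimate. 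The inverse group $(\Gamma^{-1}(t))_{t\in\R}=(\Gamma(-t))_{t\in\R}$ is treated identically. I expect the only genuinely delicate point to be the case $s\le-1$: one must make the duality definition consistent with the pointwise formula and keep the $t$-dependent operator norms uniform on $(0,T)$, which is exactly where finiteness of $T$ enters; everything else is bookkeeping layered on top of Proposition \ref{prop:GamL2isom}.
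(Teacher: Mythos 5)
Your proof is correct and structurally parallel to the paper's: both establish a $t$-uniform bound for $\Gamma(\pm t)$ on $H^s$ for positive integer $s$, handle negative $s$ by duality with respect to the $L^2$-pairing, and then integrate against the temporal weight. The divergences are in the sub-arguments. For boundedness at positive $s$ the paper works on the Fourier side, using the multiplier bound $(1+\abs{\xi+tk}^2+\abs{k}^2)\le c(T)(1+\abs{\xi}^2+\abs{k}^2)$, whereas you iterate the commutation relations $\partial_{x}\Gamma(t)=\Gamma(t)\partial_{x}$, $\partial_{v}\Gamma(t)=\Gamma(t)(\partial_{v}+t\partial_{x})$ and invoke the $L^2$-isometry; these are the same estimate in different clothing. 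For strong continuity the paper is slicker: it uses the same commutation relations to reduce $H^1$-continuity directly to the $L^2$-continuity of Proposition \ref{prop:GamL2isom}, via $\norm{\nabla_v\Gamma(t)u-\nabla_v u}_2\le\norm{\Gamma(t)\nabla_v u-\nabla_v u}_2+t\norm{\Gamma(t)\nabla_x u}_2$, while you re-prove continuity at $0$ from scratch through Schwartz density, dominated convergence on the Fourier side, and a three-$\epsilon$ argument. Your route is longer but self-contained (it does not even need the $L^2$ strong continuity, which the paper imports from the literature); the paper's is shorter but leans on Proposition \ref{prop:GamL2isom}.

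One discrepancy deserves mention: the lemma asserts a \emph{unitary} isomorphism, and your argument only yields an isomorphism with $T$-dependent bounds. You should not try to close this gap, because unitarity in fact fails for $s\neq 0$: your own commutation relation gives $\norm{\Gamma(t)u}_{H^1}^2=\norm{u}_2^2+\norm{\nabla_x u}_2^2+\norm{\nabla_v u+t\nabla_x u}_2^2$, and for, say, $u(x,v)=\phi(x)\psi(v)$ with $\phi,\psi$ real Schwartz functions the cross term vanishes, so the norm strictly increases whenever $t\neq0$ and $\nabla_x u\neq 0$; thus $\Gamma(t)$ is not an $H^1$-isometry. The paper's one-line justification ``$\Gamma^*=\Gamma^{-1}$'' is valid only for $s=0$; for $s\neq 0$ the $H^s$-adjoint of $\Gamma(t)$ is $m(D)^{-1}\Gamma(-t)m(D)$ with $m(k,\xi)=(1+\abs{k}^2+\abs{\xi}^2)^{s}$, not $\Gamma(-t)$. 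Since every later use of the lemma (Theorem \ref{thm:continL2}, Theorem \ref{thm:uniqueweak}) requires only boundedness, invertibility and strong continuity, the statement you actually proved is the correct one, and your proof of it is complete.
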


\begin{proof}
	Let us first consider the case $s = 1$. We have
	\begin{align*}
		\int_{\R^{2n}} \left( 1+ \abs{\xi}^2+\abs{k}^2 \right)^\frac{1}{2} \abs{\widehat{[\Gamma u]}(t,k,\xi)}^2 \dx k \dx \xi &= \int_{\R^{2n}} \left( 1+\abs{\xi+tk}^2+\abs{k}^2 \right)^{\frac{1}{2}} \abs{\hat{u}(t,k,\xi)}^2  \dx k \dx \xi \\
		&\le c(T) \int_{\R^{2n}} \left( 1+\abs{\xi}^2+\abs{k}^2 \right)^\frac{1}{2} \abs{\hat{u}(t,k,\xi)}^2 \dx k \dx \xi.
	\end{align*}
	Integrating in time with the temporal weight shows that $\Gamma$ is bounded. Clearly, the same holds true for the inverse operator $\Gamma^{-1}$. Moreover, $\Gamma^* = \Gamma^{-1}$ which shows that $\Gamma$ is unitary. The claim for $s = -1$ follows by the property 
	\begin{equation*}
		\langle \Gamma(t)u,w \rangle_{L^2_\mu(H^{-1})\times L^2_\mu(H^1)} = \langle u,\Gamma(-t)w \rangle_{L^2_\mu(H^{-1})\times L^2_\mu(H^1)} 
	\end{equation*}
	using the results proven already in the case $s = 1$. Here, $\langle \cdot, \cdot \rangle_{L^2_\mu(H^{-1})\times L^2_\mu(H^1)}$ denotes the dual pairing of $L^2_\mu((0,T);H^{-1}(\R^{2n})$ and $L^2_\mu((0,T);H^1(\R^{2n})$.
	
	Concerning the strong continuity of the semigroup $\Gamma$ we have in the case $s = 1$
	\begin{align*}
		&\norm{\Gamma(t)u-u}_{H^{1}(\R^{2n})} \le \norm{\Gamma(t)u-u}_{2}+ \norm{\nabla_x\Gamma(t)u - \nabla_x u}_2 + \norm{\nabla_v \Gamma(t) u- \nabla_v u}_2 \\
		&\quad\quad\le\norm{\Gamma(t)u-u}_{2}+ \norm{\Gamma(t)\nabla_x u - \nabla_x u}_2 + \norm{ \Gamma(t)\nabla_v   u- \nabla_v u}_2+t \norm{\Gamma(t)\nabla_x u}_2.
	\end{align*}
	The right-hand side of this estimate converges to zero as $t \to 0$ as a consequence of Proposition \ref{prop:GamL2isom}. The strong continuity for $s = -1$ follows by duality. The case $s = 0$ has already been proven in Proposition \ref{prop:GamL2isom}. The general case $s \in \Z$ follows similarly.  
\end{proof}

 The calculation $[\partial_t \Gamma u](t,x,v) = [\partial_t u](t,x+tv,v) + v \cdot [\nabla_x u](t,x+tv,v)  $ for sufficiently smooth functions $u$ suggests that it might be interesting to look at $\Gamma$ defined on $\T_{\mu}(H^s(\R^{2n}))$ mapping to the space
\begin{equation*}
	H^1_\mu(H^s(\R^{2n})) = \{ u \in L^2_\mu((0,T);H^s(\R^{2n})) \colon \partial_t u \in L^2_\mu((0,T);H^s(\R^{2n})) \}
\end{equation*}
equipped with the norm $\norm{u}_{H_{\mu}^1(H^s)} = \norm{u}_{2,\mu,H^s} + \norm{\partial_t u}_{2,\mu,H^s}$ for a suitable function space $H^s(\R^{2n})$.

\begin{prop} \label{prop:TtoH1}
	Let $s \in \Z$ and $T \in (0,\infty)$. The mapping
	\begin{equation*}
		\Gamma \colon \T_{\mu}((0,T);H^s(\R^{2n})) \to H^1_\mu((0,T);H^s(\R^{2n}))
	\end{equation*}
	is a well-defined isomorphism. Moreover, we have 
	$$\partial_t \Gamma u = \Gamma(\partial_t u + v \cdot \nabla_x u)$$
	in the distributional sense for all $u \in \T_{\mu}((0,T);H^s(\R^{2n}))$.
\end{prop}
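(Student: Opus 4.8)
The plan is to reduce the whole statement to the single identity
\[
\partial_t \Gamma u = \Gamma(\partial_t u + v\cdot\nabla_x u),
\]
after which both the mapping property and the bijectivity follow almost formally from Lemma \ref{lem:c0Z}. For smooth $u$ the identity is just the chain rule: differentiating $[\Gamma u](t,x,v)=u(t,x+tv,v)$ in $t$ gives $(\partial_t u)(t,x+tv,v)+v\cdot(\nabla_x u)(t,x+tv,v)$, which is precisely $[\Gamma(\partial_t u+v\cdot\nabla_x u)](t,x,v)$. The point I would stress is that one must treat $\partial_t+v\cdot\nabla_x$ as a single operator throughout: since the coefficient $v$ is unbounded, $v\cdot\nabla_x u$ need not belong to $H^s$ for $u\in H^s$, so only the combination is controlled in $\T_\mu(H^s)$, and the operator must never be split.

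To obtain the identity for general $u\in\T_\mu(H^s)$ I would argue directly on the distributional level. Writing $F:=\partial_t u+v\cdot\nabla_x u\in L^2_\mu(H^s)$, the defining weak formulation reads $\int u\,(\partial_t\phi+v\cdot\nabla_x\phi)=-\int F\,\phi$ for all test functions $\phi$. Given a test function $\psi$, I change variables $y=x+tv$ in $\int \Gamma u\,\partial_t\psi$ and use that, with $\Phi:=\Gamma^{-1}\psi$, one has $(\partial_t\psi)(t,y-tv,v)=\partial_t\Phi+v\cdot\nabla_y\Phi$. Inserting this, applying the weak formulation to the admissible test function $\Phi$ (note that $\Gamma^{-1}\psi$ is again smooth and compactly supported), and undoing the substitution yields $\int\Gamma u\,\partial_t\psi=-\int\Gamma F\,\psi$, which is exactly $\partial_t\Gamma u=\Gamma F$ in the distributional sense. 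For negative $s$ these integrals are $H^s$--$H^{-s}$ dualities and the transformation of the test function is the adjoint action, which is harmless because $\Gamma^*=\Gamma^{-1}$ by Lemma \ref{lem:c0Z}. Alternatively, the same identity can be read off on the Fourier side, where $\widehat{\Gamma u}(t,k,\xi)=\hat u(t,k,\xi-tk)$ and $\partial_t+v\cdot\nabla_x$ transforms into $\partial_t-k\cdot\nabla_\xi$, so that $\partial_t[\hat u(t,k,\xi-tk)]=(\partial_t\hat u-k\cdot\nabla_\xi\hat u)(t,k,\xi-tk)$ is once more the chain rule along the characteristics. I expect this distributional step to be the only real obstacle.

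Granting the identity, boundedness is immediate: for $u\in\T_\mu(H^s)$ both $\Gamma u$ and $\partial_t\Gamma u=\Gamma F$ lie in $L^2_\mu(H^s)$, and the unitarity of $\Gamma$ on $L^2_\mu(H^s)$ from Lemma \ref{lem:c0Z} gives $\norm{\Gamma u}_{H^1_\mu(H^s)}\le c\,\norm{u}_{\T_\mu(H^s)}$. For surjectivity together with a bounded inverse I would take $w\in H^1_\mu(H^s)$, set $u:=\Gamma^{-1}w\in L^2_\mu(H^s)$, and prove the companion identity $\partial_t u+v\cdot\nabla_x u=\Gamma^{-1}(\partial_t w)$ by the very same change-of-variables argument, now transforming the test function by $\Gamma$ rather than $\Gamma^{-1}$. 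Since $\Gamma^{-1}$ preserves $L^2_\mu(H^s)$ and $\partial_t w\in L^2_\mu(H^s)$, this shows $u\in\T_\mu(H^s)$ with $\Gamma u=w$ and controls $\norm{u}_{\T_\mu(H^s)}$ by $\norm{w}_{H^1_\mu(H^s)}$; injectivity is inherited from the injectivity of $\Gamma$ on $L^2_\mu(H^s)$. This completes the isomorphism.
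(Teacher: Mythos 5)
Your proof is correct and follows essentially the same route as the paper: the identity $\partial_t\Gamma u = \Gamma(\partial_t u + v\cdot\nabla_x u)$ is obtained by differentiating $\Gamma u$ in the distributional sense (your change-of-variables argument on test functions, or equivalently the Fourier-side computation, is exactly what this amounts to), and the isomorphism property follows by running the same argument for the explicit inverse $\Gamma^{-1}$, with Lemma \ref{lem:c0Z} supplying the $L^2_\mu(H^s)$ bounds. The paper's proof states these steps more tersely; your write-up simply fills in the details it leaves implicit.
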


\begin{proof}
	By taking the derivative $\partial_t$ of $\Gamma u$ in the distributional sense it follows that $\Gamma$ is well-defined. Furthermore, this calculation shows that $\partial_t \Gamma u = \Gamma(\partial_t u + v \cdot \nabla_x u)$ for all $u \in \T_{\mu}((0,T);H^s(\R^{2n}))$. To conclude it remains to show that $\Gamma$ is an isomorphism. As before we can explicitly give the inverse as $[\Gamma^{-1}w](t,x,v) = w(t,x-tv,v)$. Thus, it remains to show that $\Gamma^{-1} \colon H^1_\mu((0,T);H^s(\R^{2n})) \to \T_{\mu}((0,T);H^s(\R^{2n})) $ is well-defined. This follows by similar arguments and gives the identity $\partial_t \Gamma^{-1}u + v \cdot \nabla_x \Gamma^{-1}u = \Gamma^{-1} \partial_t u $.
\end{proof}

A priori it is not clear that a function $u \in \T_{\mu}((0,T);X_\beta^s) $ is continuous with values in a suitable function space and that one can define the notion of a trace $u(0)$ at $t = 0$. However, using the mapping properties of $\Gamma$ we can easily deduce the following theorem. 

\begin{theorem} \label{thm:continL2}
	Let $\beta \in (0,2]$, $s \ge -1/2$, $\mu \in (1/2,1]$ and $T \in (0,\infty)$. Then, the embedding
	\begin{equation*}
		\T_{\mu}((0,T);{X}_\beta^s) \hookrightarrow C([0,T];H^{-1}(\R^{2n})).
	\end{equation*}
	holds continuously. Moreover, if $s \ge 0$, then 
	\begin{equation*}
		\T_{\mu}((0,T);{X}_\beta^s) \hookrightarrow C([0,T];L^2(\R^{2n})).
	\end{equation*}
\end{theorem}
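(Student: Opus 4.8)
The plan is to straighten the characteristics with $\Gamma$ and thereby reduce the statement to a scalar weighted Sobolev embedding in time. First I would record the continuous inclusion $X_\beta^s\hookrightarrow H^{-1}(\R^{2n})$ for $s\ge -1/2$ (already noted before the theorem), which on the level of Fourier multipliers amounts to $(1+\abs{k}^2+\abs{\xi}^2)^{-1/2}\lesssim\omega_\beta(k,\xi)^s$; the decisive direction is the one in $\xi$, where it reads $-1\le\beta s$, valid since $s\ge-1/2\ge-1/\beta$ for $\beta\le2$. Integrating in time against the weight, this yields a continuous embedding $\T_\mu((0,T);X_\beta^s)\hookrightarrow\T_\mu((0,T);H^{-1}(\R^{2n}))$. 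By Proposition \ref{prop:TtoH1} with the integer value $s=-1$, the map $\Gamma$ is an isomorphism from $\T_\mu((0,T);H^{-1}(\R^{2n}))$ onto $H^1_\mu((0,T);H^{-1}(\R^{2n}))$, with $\partial_t\Gamma u=\Gamma(\partial_t u+v\cdot\nabla_x u)$ in the distributional sense. Hence the statement reduces to a time-regularity embedding for $H^1_\mu$ together with the transfer back by $\Gamma^{-1}$.

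The heart of the argument is the scalar embedding
$$H^1_\mu((0,T);Y)\hookrightarrow C([0,T];Y)$$
for an arbitrary Banach space $Y$ and $\mu\in(1/2,1]$. For $w$ in this space and $0<\tau<t$, writing $w(t)-w(\tau)=\int_\tau^t\partial_s w(s)\,\dx s$ and applying the Cauchy--Schwarz inequality to the splitting $\partial_s w=s^{-(1-\mu)}\cdot s^{1-\mu}\partial_s w$ gives
$$\norm{w(t)-w(\tau)}_Y\le\Big(\int_\tau^t s^{2\mu-2}\,\dx s\Big)^{1/2}\Big(\int_\tau^t s^{2-2\mu}\norm{\partial_s w(s)}_Y^2\,\dx s\Big)^{1/2}.$$
Since $\mu>1/2$ we have $2\mu-2>-1$, so $\int_0^t s^{2\mu-2}\,\dx s<\infty$; letting $t,\tau\to0^+$ both factors tend to $0$, so $w(t)$ is Cauchy and $w(0):=\lim_{t\to0^+}w(t)$ exists in $Y$. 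On $(0,T]$ the weight is bounded away from $0$ and $\infty$, so $w\in C((0,T];Y)$ by the classical unweighted embedding; together this gives $w\in C([0,T];Y)$ and $\sup_{[0,T]}\norm{w}_Y\lesssim\norm{w}_{H^1_\mu(Y)}$. I expect this step, and specifically the role of $\mu>1/2$ in making $s^{2\mu-2}$ integrable at the origin, to be the only genuinely delicate point.

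Finally I would transfer back. For $u\in\T_\mu((0,T);X_\beta^s)$ set $w:=\Gamma u\in H^1_\mu((0,T);H^{-1})\hookrightarrow C([0,T];H^{-1})$ by the above, so that $u(t)=\Gamma(-t)w(t)$. By Lemma \ref{lem:c0Z} with $s=-1$ the family $(\Gamma(-t))_t$ is a strongly continuous group of unitaries on $H^{-1}(\R^{2n})$; strong continuity together with uniform boundedness gives joint continuity of $(t,y)\mapsto\Gamma(-t)y$ on $[0,T]\times H^{-1}$, whence $t\mapsto\Gamma(-t)w(t)=u(t)$ lies in $C([0,T];H^{-1})$. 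As each $\Gamma(-t)$ is isometric, $\norm{u}_{C([0,T];H^{-1})}=\norm{w}_{C([0,T];H^{-1})}$, and chaining the three bounded maps yields the continuous embedding. For $s\ge0$ the same scheme applies verbatim with $H^{-1}$ replaced by $L^2(\R^{2n})=H^0$: one uses $X_\beta^s\hookrightarrow L^2$ (as $\omega_\beta^s\ge1$), Proposition \ref{prop:TtoH1} and Proposition \ref{prop:GamL2isom} with $s=0$, the scalar embedding with $Y=L^2$, and the strong continuity of $(\Gamma(-t))_t$ on $L^2$.
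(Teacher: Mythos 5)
Your proof follows essentially the same route as the paper's: reduce via the inclusion $X_\beta^s\hookrightarrow H^{-1}(\R^{2n})$ (resp.\ $L^2(\R^{2n})$ for $s\ge 0$), straighten the characteristics with $\Gamma$ via Proposition \ref{prop:TtoH1}, invoke the weighted embedding $H^1_\mu((0,T);Y)\hookrightarrow C([0,T];Y)$ (which the paper cites from Pr\"uss--Simonett and you prove inline by the same Cauchy--Schwarz argument), and transfer back using the strong continuity of $(\Gamma^{-1}(t))_t$ from Lemma \ref{lem:c0Z}. One small correction: $\Gamma(-t)$ is isometric on $L^2(\R^{2n})$ but \emph{not} on $H^{-1}(\R^{2n})$, since on the Fourier side it shifts $\xi\mapsto \xi+tk$ inside the weight $(1+\abs{k}^2+\abs{\xi}^2)^{-1}$; so your final equality of $C([0,T];H^{-1})$-norms should be replaced by the bound $\norm{u}_{C([0,T];H^{-1})}\le c(T)\norm{w}_{C([0,T];H^{-1})}$, which follows from the uniform boundedness of $(\Gamma(-t))_{t\in[0,T]}$ established in Lemma \ref{lem:c0Z} and changes nothing else in the argument.
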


\begin{proof}
	Let $Z \in \{ L^2(\R^{2n}), H^{-1}(\R^{2n})\}$ and $u \in \T_{\mu}((0,T);Z) $. Then $\Gamma(u) \in H_\mu^1((0,T);Z) $ by Proposition \ref{prop:TtoH1}, and thus $ \Gamma(u) \in  C([0,T]; Z)$ as it is shown in \cite[Section 3]{pruss_maximal_2004} that 
	$$H^1_\mu((0,T); Z) \hookrightarrow C([0,T]; Z)$$ 
	continuously. In Lemma \ref{lem:c0Z} we have seen that $(\Gamma^{-1}(t))_{t \ge 0}$ is strongly continuous in $Z$ and hence also $u(t) = \Gamma^{-1}(t) ([\Gamma u](t))$ is continuous with values in $Z$ by the equicontinuity lemma. The continuity of this embedding follows by Proposition \ref{prop:TtoH1}. 
	Next, we recall that $X_\beta^s \subset H^{-1}(\R^{2n})$, which shows the first embedding. The second embedding follows from $X_\beta^s \subset L^2(\R^{2n})$ for $s \ge 0$.
\end{proof}

We deduce that for all $T \in (0,\infty] $ the vector space 
\begin{align*}
	\tr_{\mu,\mathrm{kin}}(X_\beta^s) &:= \tr (\T_\mu((0,T);X_\beta^s) \cap L^2_\mu((0,T);X_\beta^{s+1})) \\
	&:= \{ u(0) \in H^{-1}(\R^{2n}) \; | \; u \in \T_\mu((0,T);X_\beta^s) \cap L^2_\mu((0,T);X_\beta^{s+1}) \}
\end{align*}
 is well-defined by choosing $u(0)$ to be the value of the continuous representative $u$ of any function $u \in \T_\mu((0,T);X_\beta^s) \cap L^2_\mu((0,T);X_\beta^{s+1})$. We equip this trace space with the norm 
 \begin{align*}
 	\norm{g}_{\tr} &= \norm{g}_{\tr_{\mu,\mathrm{kin}}(X_\beta^s)} \\
 	&= \inf \{ \norm{u}_{\T_\mu(X_\beta^s) \cap L^2_\mu(X_\beta^{s+1})} \; | \;  u \in \T_\mu((0,T);X_\beta^s) \cap L^2_\mu((0,T);X_\beta^{s+1}) \text{ with } u(0) = g  \}.
 \end{align*}
As in the proof of \cite[Proposition 1.4.1]{amann_linear_1995} one can show that the trace space does not depend on the value of $T$. This is the reason why we dropped the $T$ dependency in our notation.

\begin{definition}
	Let $\beta \in (0,2]$, $T \in (0,\infty)$, $\mu \in (\frac{1}{2},1]$, $s \ge -\frac{1}{2}$. We say that the Kolmogorov equation of order $\beta \in (0,2]$ satisfies the kinetic maximal $L^2_\mu(X_\beta^s)$-regularity property if for all $f \in L^2_\mu((0,T);X_\beta^s)$ and any $g \in \tr_{\mu,\mathrm{kin}}(X_\beta^s)$ there exists a unique distributional solution $u \in \T_\mu((0,T);X_\beta^s) \cap L^2_\mu((0,T);X_\beta^{s+1})$ of the Kolmogorov equation 
	\begin{equation*}
			\begin{cases}
		\partial_t u +v \cdot \nabla_x u +(-\Delta_v)^{\frac{\beta}{2}} u = f, \quad t>0 \\
		u(0) = g.
	\end{cases}
	\end{equation*}
\end{definition}

\begin{remark}
	It turns out, that the conditions on $f$ and $g$ are also necessary. Indeed, if $u \in \T_\mu((0,T);X_\beta^s) \cap L^2_\mu((0,T);X_\beta^{s+1}) $ is a solution of the Kolmogorov equation, then Proposition \ref{prop:contabstract} below implies $g = u(0)  \in \tr_{\mu,\mathrm{kin}}(X_\beta^s)$. Moreover, a straightforward calculation in the Fourier variable shows $(-\Delta_v)^\frac{\beta}{2} u \in L^2_\mu((0,T);X_\beta^{s})$, whence $\partial_t u + v\cdot \nabla_x u + (-\Delta_v)^\frac{\beta}{2} u = f \in L^2_\mu((0,T);X_\beta^{s})$.
\end{remark}

\begin{remark} 
Usually, one defines maximal $L^p$-regularity without consideration of the initial value, i.e. only for the nonhomogeneous problem with initial value $u(0) = 0$. This can be done in the kinetic setting, too. A precise statement can be found in the upcoming article \cite{niebel_kinetic_nodate-1}. Furthermore, one can show that the kinetic maximal $L^p$ regularity property does not depend on $T \in (0,\infty)$. Finally, we consider more general operators instead of the fractional Laplacian in the velocity variable in \cite{niebel_kinetic_nodate-1}. 
\end{remark}

As it is the case for the classical maximal $L^2$-regularity the uniqueness property can be shown separately. 

\begin{theorem} \label{thm:uniqueweak}
	Let $T >0$, $\mu \in (\frac{1}{2},1]$, $\beta \in (0,2]$, $s \ge -1/2$. Any solution $u \in \T_\mu((0,T);X_\beta^s) \cap L^2_\mu((0,T);X_\beta^{s+1})$ of the homogeneous Kolmogorov equation $\partial_t u + v \cdot \nabla_x u = -(-\Delta_v)^\frac{\beta}{2}$, with initial value $u(0) = 0$, is equal to zero almost everywhere. 
\end{theorem}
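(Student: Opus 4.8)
The plan is to transport the equation by $\Gamma$ so that it decouples, in the Fourier variables, into a family of scalar linear ODEs each carrying vanishing initial data; uniqueness for those ODEs then forces $u\equiv 0$. First I would note that, since $X_\beta^s\hookrightarrow H^{-1}(\R^{2n})$ for $s\ge -1/2$, the solution satisfies $u\in\T_\mu((0,T);H^{-1}(\R^{2n}))$. Proposition~\ref{prop:TtoH1}, applied with the integer exponent $-1$, then gives $\Gamma u\in H^1_\mu((0,T);H^{-1}(\R^{2n}))$ together with
\[
\partial_t\Gamma u=\Gamma(\partial_t u+v\cdot\nabla_x u)=-\Gamma\bigl((-\Delta_v)^{\beta/2}u\bigr),
\]
where the last step inserts the homogeneous equation; recall that $(-\Delta_v)^{\beta/2}u\in L^2_\mu((0,T);X_\beta^s)$, so all terms are genuine elements of $L^2_\mu((0,T);H^{-1}(\R^{2n}))$.

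Next I would pass to the Fourier side. A direct substitution shows $\widehat{\Gamma u}(t,k,\xi)=\hat u(t,k,\xi-tk)$, and since the Fourier transform acts only in the spatial variables it commutes with $\partial_t$. Setting $W(t,k,\xi):=\hat u(t,k,\xi-tk)$, the displayed identity becomes
\[
\partial_t W(t,k,\xi)=-\abs{\xi-tk}^\beta\,W(t,k,\xi),
\]
an equality in the Fourier image of $L^2_\mu((0,T);H^{-1}(\R^{2n}))$. Testing against products $\phi\otimes\psi$, with $\phi$ ranging over a countable dense subset of $C_c^\infty((0,T))$, and using Fubini, I would deduce that for almost every fixed $(k,\xi)$ the scalar map $t\mapsto W(t,k,\xi)$ lies in $H^1_{\mathrm{loc}}((0,T))$ and solves the above linear ODE, whose coefficient $\abs{\xi-tk}^\beta$ is continuous and bounded on $[0,T]$ for each fixed $(k,\xi)$.

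The initial condition then enters through Theorem~\ref{thm:continL2}: we have $\Gamma u\in C([0,T];H^{-1}(\R^{2n}))$ with $\Gamma u(0)=u(0)=0$, so $W(t,\cdot)\to 0$ in the relevant weighted $L^2$-norm as $t\to 0^+$, and hence $W(t_0,k,\xi)\to 0$ along a subsequence for a.e. $(k,\xi)$. Integrating the ODE with the integrating factor $\exp(\int_{t_0}^t\abs{\xi-sk}^\beta\,\dx s)$ gives $W(t,k,\xi)=W(t_0,k,\xi)\,e_\beta(t,k,\xi-tk)/e_\beta(t_0,k,\xi-t_0k)$, and letting $t_0\to 0^+$ forces $W\equiv 0$, i.e. $\hat u\equiv 0$ and $u=0$ almost everywhere. (This is of course consistent with the representation formula \eqref{eq:fouriersol} evaluated at $g=0$.)

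I expect the only genuine difficulty to be the rigorous descent from the Bochner-space identity to the pointwise-in-$(k,\xi)$ scalar ODE: one must organise the null sets so that both the ODE and the initial value $W(0,k,\xi)=0$ hold simultaneously for a.e. $(k,\xi)$, all while respecting the degeneracy of the weight $t^{2-2\mu}$ at $t=0$ (which is why I pass to an interior point $t_0>0$ and let $t_0\to 0^+$ using the continuous trace). For the strong range $s\ge 0$, where $u\in C([0,T];L^2(\R^{2n}))$ by Theorem~\ref{thm:continL2}, I would remark that a shorter energy argument is available: pairing the equation with $u$ and using $\langle v\cdot\nabla_x u,u\rangle=0$ together with $\langle(-\Delta_v)^{\beta/2}u,u\rangle=\norm{(-\Delta_v)^{\beta/4}u}_2^2\ge 0$ yields $\tfrac{\dx}{\dx t}\norm{u}_2^2\le 0$, whence $\norm{u(t)}_2\le\norm{u(0)}_2=0$.
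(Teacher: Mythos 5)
Your proof is correct, and it shares the paper's decisive first step: conjugating with $\Gamma$ (via Proposition \ref{prop:TtoH1} and Lemma \ref{lem:c0Z}) to eliminate the transport term, so that the problem becomes the non-autonomous equation $\partial_t w = A_\beta(t)w$, $w(0)=0$, where $A_\beta(t)$ has Fourier symbol $-\abs{\xi-tk}^\beta$. Where you genuinely differ is in how uniqueness for this transformed problem is obtained. The paper, following \cite[Proposition 3.2]{arendt_lp-maximal_2007}, never goes pointwise in $(k,\xi)$: it views $A_\beta(t)$ as a dissipative operator on the ambient space $H^{-2}(\R^{2n})$ --- the exponent $-2$ is forced there, since for $\beta>1$ the symbol $\abs{\xi-tk}^\beta$ does not map all of $L^2$ into $H^{-1}$; you can afford $H^{-1}$ only because you apply the operator to the actual solution, which carries the extra $L^2_\mu((0,T);X_\beta^{s+1})$ regularity --- and concludes from $\frac{\dx}{\dx t}\norm{w(t)}_{H^{-2}}^2 = 2\langle A_\beta(t)w(t),w(t)\rangle_{H^{-2}} \le 0$ together with $w(0)=0$ that $w\equiv 0$: three lines, no Fubini, no null-set bookkeeping. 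Your route instead integrates the decoupled scalar ODEs explicitly for a.e.\ $(k,\xi)$, which is more elementary and exhibits the explicit decay factor $e_\beta(t,k,\xi-tk)/e_\beta(t_0,k,\xi-t_0k)$, but it is tied to the constant-coefficient diagonal structure of $(-\Delta_v)^{\beta/2}$ and requires exactly the measure-theoretic care you flag (countable families of test functions, common null sets, and approaching $t=0$ through interior points $t_0$ because of the degenerate weight). The paper's energy argument is the more robust of the two --- it survives any generalization in which the transported operator family remains dissipative, such as the more general operators announced in \cite{niebel_kinetic_nodate-1} --- while yours is self-contained and makes the underlying mechanism (pure exponential decay along characteristics) visible. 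One caveat on your closing aside for $s\ge 0$: the identity $\frac{\dx}{\dx t}\norm{u}_2^2 = 2\langle \partial_t u + v\cdot\nabla_x u, u\rangle$ is not free, because $\partial_t u$ alone is not controlled in this setting; justifying that chain rule is precisely what the $\Gamma$-conjugation accomplishes, so that remark should either be dropped or itself routed through $\Gamma$.
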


\begin{proof}
The proof is based on the argument given in the proof of \cite[Proposition 3.2]{arendt_lp-maximal_2007}.
Let us introduce the family of operators $A_\beta(t) \colon D(A_\beta(t)) \to H^{-2}(\R^{2n})$, $\widehat{A_\beta(t)w}(k,\xi) = - \abs{\xi-tk}^\beta \hat{w}(k,\xi)$ acting on functions in $H^{-2}(\R^{2n})$ for every $t \ge 0$. We are going to show first that if $w \in H^1_\mu((0,T);H^{-2}(\R^{2n})) \cap L^2_\mu((0,T);L^2(\R^{2n}))$ is a solution of the nonautonomous problem
 	\begin{equation} \label{eq:nonautokol}
 		\begin{cases}
 			\partial_t w = A_\beta(t) w, \quad t>0 \\
 			w(0) = 0 
 		\end{cases}
 	\end{equation}
 	it follows that $w = 0$. Let us first note that 
 	\begin{align*}
 		D(A_\beta(t)) &= \{ w \in H^{-2}(\R^{2n}) \colon A_\beta(t)w \in H^{-2}(\R^{2n}) \} \\
 		&= \{ w \in H^{-2}(\R^{2n}) \colon  \int_{\R^{2n}} (1+\abs{k}^2+\abs{\xi^2})^{-2} \abs{\xi-tk}^{2\beta} \abs{\hat{w}(k,\xi)}^2 \dx k \dx \xi < \infty \},
 	\end{align*}
 	whence $L^2(\R^{2n}) \subset D(A_\beta(t))$ for all $t \in  (0,T)$. Moreover, $A_\beta(t)$ is a dissipative operator, as for all $x \in D(A_\beta(t))$ we have $ \langle A_\beta(t)x,x \rangle_{H^{-2}(\R^{2n})} \le 0$. As a consequence of $u \in H^1_\mu((0,T);H^{-2}(\R^{2n}))$ we have
 	\begin{align*}
 		\frac{\dx}{\dx t} \norm{w(t)}_{H^{-2}(\R^{2n})}^2 = 2\langle \dot{w}(t), w(t) \rangle_{H^{-2}(\R^{2n})} = 2\langle A_\beta(t) w(t), w(t) \rangle_{H^{-2}(\R^{2n})} \le 0,
 	\end{align*}
 	which shows $w = 0$.
 	 	
 	Let $u \in \T_\mu((0,T);X_\beta^s) \cap L^2_\mu((0,T);X_\beta^{s+1})$ be a solution of the Kolmogorov equation, then $u \in \T_\mu((0,T);H^{-2}(\R^{2n}) \cap L^2_\mu((0,T);L^2(\R^{2n}))$, whence $\Gamma u \in H^1_\mu((0,T);H^{-2}(\R^{2n})) \cap L^2_\mu((0,T);L^2(\R^{2n}))$ as a consequence of Lemma \ref{lem:c0Z}. A direct calculation shows that the function $\Gamma u$ satisfies the equation \eqref{eq:nonautokol}, hence it follows that $\Gamma u = 0$ and in particular this implies that $u = 0$ by Lemma \ref{lem:c0Z}.  
\end{proof}

In view of the results in Section \ref{sec:homo} it can be expected that the trace space $\tr_{\mu,\mathrm{kin}}(X_\beta^s)$ coincides with the space $X_\beta^{s+1/2-(1-\mu)}$. In what follows we will show that this is indeed the case. Let us first prove that every function $u \in \T_\mu((0,T);X_\beta^s) \cap L^2_\mu((0,T);X_\beta^{s+1})$ is continuous with values in the trace space $\tr_{\mu,\mathrm{kin}}(X_\beta^s)$.

\begin{prop} \label{prop:contabstract}
	For every $T  \in (0,\infty)$, all $s \ge -1/2$ and any $\mu \in (\frac{1}{2},1]$ we have 
	\begin{equation*}
		\T_\mu((0,T);X_\beta^s) \cap L^2_\mu((0,T);X_\beta^{s+1}) \hookrightarrow C([0,T]; \tr_{\mu,\mathrm{kin}}(X_\beta^s))
	\end{equation*}
	continuously. 
\end{prop}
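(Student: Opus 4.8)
The plan is to establish two ingredients and then combine them by a soft functional-analytic argument, without ever identifying the trace space concretely. Throughout abbreviate $\mathbb{E}_\mu := \T_\mu((0,T);X_\beta^s) \cap L^2_\mu((0,T);X_\beta^{s+1})$, and recall from Theorem \ref{thm:continL2} that every $u \in \mathbb{E}_\mu$ has a representative in $C([0,T];H^{-1}(\R^{2n}))$, so that the pointwise values $u(t)$, $t\in[0,T]$, are unambiguous and the trace at $0$ of any shifted function is read off from this representative. The first ingredient is a uniform trace bound $\sup_{t\in[0,T]}\norm{u(t)}_{\tr} \lesssim \norm{u}_{\mathbb{E}_\mu}$, and the second is the strong continuity of time translations on $\mathbb{E}_\mu$. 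Granting both, continuity follows at once: for admissible $t$ the function $g(r):=(u(\cdot+h)-u)(r)$ lies in $\mathbb{E}_\mu((0,T-h))$, its value at $r=t$ is $u(t+h)-u(t)$ and at $r=t-h$ is $u(t)-u(t-h)$, so applying the uniform trace bound to $g$ gives $\norm{u(t\pm h)-u(t)}_{\tr} \lesssim \norm{u(\cdot+h)-u}_{\mathbb{E}_\mu((0,T-h))}$, which tends to $0$ as $h\to0^+$ uniformly in $t$. Hence $u\in C([0,T];\tr_{\mu,\mathrm{kin}}(X_\beta^s))$, and the continuity of the embedding is the trace bound itself.

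For the trace bound I will use translation invariance of $\mathbb{E}_\mu$ along the time variable. For $t_0\in[0,T)$ set $u_{t_0}(t):=u(t+t_0)$ on $(0,T-t_0)$; then $\partial_t u_{t_0}+v\cdot\nabla_x u_{t_0}=(\partial_t u+v\cdot\nabla_x u)(\cdot+t_0)$, and since $2-2\mu\ge0$ we have $t^{2-2\mu}\le(t+t_0)^{2-2\mu}$, so a change of variables gives $\norm{u_{t_0}}_{\mathbb{E}_\mu((0,T-t_0))}\le\norm{u}_{\mathbb{E}_\mu((0,T))}$ for each component. By Theorem \ref{thm:continL2} the $H^{-1}$-representative of $u_{t_0}$ equals $u(t_0)$ at $t=0$, so $u(t_0)\in\tr_{\mu,\mathrm{kin}}(X_\beta^s)$, and using that the trace space and its norm are independent of the length of the time interval (as in \cite[Proposition 1.4.1]{amann_linear_1995}) one obtains $\norm{u(t_0)}_{\tr}\le\norm{u_{t_0}}_{\mathbb{E}_\mu((0,T-t_0))}\le\norm{u}_{\mathbb{E}_\mu}$, uniformly in $t_0$. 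This same inequality, applied with $u$ replaced by $g=u(\cdot+h)-u$, produces the estimate used above.

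It remains to record the translation continuity, which reduces to strong continuity of $h\mapsto w(\cdot+h)$ on each weighted Bochner space $L^2_\mu((0,T);X_\beta^s)$ and $L^2_\mu((0,T);X_\beta^{s+1})$: the shift operators are uniformly bounded there by the weight monotonicity just exploited, and $C_c^\infty((0,T);\cdot)$ is dense, so the standard equicontinuity argument applies; since shifting commutes with $\partial_t+v\cdot\nabla_x$, the transport component is handled along with the other two. The main obstacle is the uniformity of the trace bound near the right endpoint: the shift argument produces intervals $(0,T-t_0)$ whose length degenerates as $t_0\to T$, and one must ensure that the interval-independence of the trace norm in \cite[Proposition 1.4.1]{amann_linear_1995} is quantitative enough for the implied constant to stay bounded (equivalently, extend or reflect the interval before applying it). It is precisely the temporal weight $t^{2-2\mu}$ at $t=0$ and the degeneracy of $\partial_t+v\cdot\nabla_x$ that make this bookkeeping delicate, whereas the interior continuity is entirely soft.
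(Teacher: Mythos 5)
Your strategy coincides with the paper's: control $\norm{u(t_0)}_{\tr}$ by shifting time and invoking the definition of the trace norm as an infimum, then conclude via strong continuity of translations on the weighted spaces. However, the issue you defer in your last paragraph as delicate ``bookkeeping'' is a genuine gap, and it is exactly where the paper's proof does its real work. On $(0,T)$ the shift $u(\cdot+t_0)$ lives on $(0,T-t_0)$, so the infimum definition only controls $u(t_0)$ in the trace norm \emph{associated with the interval} $(0,T-t_0)$. Trace norms over different intervals are equivalent, but the equivalence constants degenerate as the interval length tends to zero: for a fixed Schwartz function $g\neq 0$ the time-constant competitor $w\equiv g$ shows that the trace norm of $g$ relative to $(0,\epsilon)$ tends to $0$ as $\epsilon\to 0$, while the trace norm relative to $(0,T)$ stays fixed, so no uniform comparison is possible. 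Consequently your bound $\norm{u(t_0)}_{\tr}\lesssim\norm{u}$ is not uniform as $t_0\to T$, and the value $u(T)$ is not the trace of any shift over a nondegenerate interval at all. As written, your argument yields the embedding into $C([0,T);\tr_{\mu,\mathrm{kin}}(X_\beta^s))$ with locally uniform constants, but neither continuity at $t=T$ nor a uniform embedding constant on $[0,T]$.

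The paper removes the endpoint problem by doing the extension \emph{first}: it proves the statement for $T=\infty$, where shifting never shrinks the domain, and then, for finite $T$, extends $u$ to a function $\tilde u\in \T_\mu((0,\infty);X_\beta^s)\cap L^2_\mu((0,\infty);X_\beta^{s+1})$ by reflecting about $t=T$ and multiplying by a cutoff supported in $[0,\tfrac32 T]$, with the norm of $\tilde u$ controlled by that of $u$ on $(0,T)$; the infinite-interval result applied to $\tilde u$ then gives the claim on all of $[0,T]$. Carrying out this extension is the step your proposal is missing, and it is not routine in the kinetic setting: the space $\T_\mu$ controls only the combination $\partial_t u+v\cdot\nabla_x u$, and under the time reflection $t\mapsto 2T-t$ this becomes $-\partial_t u+v\cdot\nabla_x u$, which is \emph{not} controlled. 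Indeed, $u(t,x,v)=\chi(x-tv)\rho(v)$ with $\chi$ smooth and compactly supported and $\rho$ smooth, square integrable, but with $\abs{v}\rho(v)\notin L^2$, lies in all the spaces above (its transport derivative vanishes identically), yet $v\cdot\nabla_x u(t)\notin L^2(\R^{2n})$. The time reflection must therefore be combined with the velocity reflection $v\mapsto -v$, under which the spaces $X_\beta^s$ are invariant and the transport operator transforms correctly (a point that the paper's formula $\eta(t)u(2T-t)$ itself glosses over). Once such an extension is in place, the remainder of your argument goes through.
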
 

\begin{proof}
	The proof is inspired by the abstract result in \cite[Proposition 1.4.2]{amann_linear_1995}. Let us consider the case $T = \infty$ first. We introduce the translation semigroup $(\lambda_r)_{r \ge 0}$ (w.r.t.\ time) defined by $\lambda_r u = u(\cdot +r)$ on $\T_\mu((0,\infty);X_\beta^s) \cap L^2_\mu((0,\infty);X_\beta^{s+1})$. We claim that this semigroup of bounded operators is strongly continuous. This is a well-known result for the space $ L^2_\mu((0,\infty);X_\beta^{s+1})$. To see this for $\T_\mu((0,\infty);X_\beta^s) $ we note that the translation semigroup $(\lambda_r)_{r \ge 0}$ commutes with $\partial_t+v \cdot \nabla_x$. Let $r \ge 0$, then $(\lambda_r u)(0) = u(r)$. Let $u \in \T_\mu((0,\infty);X_\beta^s) \cap L^2_\mu((0,\infty);X_\beta^{s+1})$, then
	\begin{align*}
		\norm{u(t)-u(r)}_{\tr_{\mu,\mathrm{kin}}(X_\beta^s)} &= \norm{(\lambda_r(\lambda_{t-r}-1) u)(0)}_{\tr_{\mu,\mathrm{kin}}(X_\beta^s)} \le \norm{\lambda_r(\lambda_{t-r}-1) u}_{ \T_\mu(X_\beta^s) \cap L^2_\mu(X_\beta^{s+1})} \\
		& \lesssim \norm{(\lambda_{t-r}-1) u}_{ \T_\mu(X_\beta^s) \cap L^2_\mu(X_\beta^{s+1})}
	\end{align*}
	for $0\le r<t< \infty$, which shows $u \in C([0,\infty);\tr_{\mu,\mathrm{kin}}(X_\beta^s))$ as a consequence of the boundedness and strong continuity of $(\lambda_r)_{r \ge 0}$. 
	
	Let us now prove the claim of the proposition. Let $T \in (0,\infty)$ and $u \in \T_\mu((0,T);X_\beta^s) \cap L^2_\mu((0,T);X_\beta^{s+1})$. We choose $\eta \in C_c^\infty([0,\infty);\R)$ such that $\eta = 1$ in $[0,T]$ and $\eta = 0$ in $[\frac{3}{2}T,\infty)$. Then, the function 
	\begin{equation*}
			\tilde{u}(t) = 		\begin{cases}
				u(t) &\colon t \in [0,T] \\
				\eta(t)u(2T-t) &\colon t \in (T,2T] \\
				0 &\colon t \in (2T,\infty)
		\end{cases}
	\end{equation*}
	is an element of $ \T_\mu((0,\infty);X_\beta^s) \cap L^2_\mu((0,\infty);X_\beta^{s+1})$ with norm bounded by the norm of $u$ in $\T_\mu((0,T);X_\beta^s) \cap L^2_\mu((0,T);X_\beta^{s+1})$ and coincides with $u$ on $[0,T]$. From here, the claim follows. 
	
\end{proof}

We come now to the characterization of $(\tr_{\mu,\mathrm{kin}}(X_\beta^s), \norm{\cdot}_{\tr})$ in terms of anisotropic Sobolev spaces.

\begin{theorem} \label{thm:chartraceL2}
	We have
	$$(\tr_{\mu,\mathrm{kin}}(X_\beta^s), \norm{\cdot}_{\tr}) \cong X_\beta^{s+1/2-(1-\mu)},$$ 
	for all $\mu \in (\frac{1}{2},1]$, any $\beta \in (0,2]$ and all $s \ge -1/2$.
\end{theorem}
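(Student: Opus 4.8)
The plan is to prove the isomorphism by establishing the two continuous inclusions $X_\beta^{s+1/2-(1-\mu)} \hookrightarrow \tr_{\mu,\mathrm{kin}}(X_\beta^s)$ and $\tr_{\mu,\mathrm{kin}}(X_\beta^s) \hookrightarrow X_\beta^{s+1/2-(1-\mu)}$, which together with the completeness of both spaces and the equivalence of the two norms yield the claimed topological isomorphism. The two workhorses are the homogeneous solution operator from \eqref{eq:fouriersol} and the decomposition $u = v_0 + v_1$ of an arbitrary extension into its homogeneous and inhomogeneous parts, so that the already proven Propositions \ref{prop:inhomogeneity}, \ref{prop:suffcondiv} and \ref{prop:necessarycondiv} can be invoked directly.

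For the first inclusion I would take $g \in X_\beta^{s+1/2-(1-\mu)}$ and let $u$ be the homogeneous solution with $u(0) = g$, i.e.\ $\hat u(t,k,\xi) = \hat g(k,\xi+tk)\, e_\beta(t,k,\xi)$ from \eqref{eq:fouriersol} with $f = 0$. Proposition \ref{prop:suffcondiv} gives $u \in L^2_\mu((0,T);X_\beta^{s+1})$ with $\norm{u}_{2,\mu,X_\beta^{s+1}} \lesssim \norm{g}_{X_\beta^{s+1/2-(1-\mu)}}$. To see $u \in \T_\mu((0,T);X_\beta^s)$ I would use the equation itself: since $\partial_t u + v\cdot\nabla_x u = -(-\Delta_v)^{\beta/2} u$ and $\abs{\xi}^\beta \le \omega_\beta(k,\xi)$, Plancherel's theorem gives $\norm{\partial_t u + v\cdot\nabla_x u}_{2,\mu,X_\beta^s} \le \norm{u}_{2,\mu,X_\beta^{s+1}}$, and likewise $\norm{u}_{2,\mu,X_\beta^s} \le \norm{u}_{2,\mu,X_\beta^{s+1}}$ as $\omega_\beta \ge 1$. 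Because $\hat u(0)=\hat g$ and $u$ admits a continuous representative by Theorem \ref{thm:continL2}, its trace is $g$; using $u$ as a competitor in the infimum defining $\norm{\cdot}_{\tr}$ then yields $\norm{g}_{\tr} \lesssim \norm{g}_{X_\beta^{s+1/2-(1-\mu)}}$.

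For the reverse inclusion I would start from an arbitrary $u \in \T_\mu((0,T);X_\beta^s) \cap L^2_\mu((0,T);X_\beta^{s+1})$ with $u(0) = g$ and set $f := \partial_t u + v\cdot\nabla_x u + (-\Delta_v)^{\beta/2} u$, which lies in $L^2_\mu((0,T);X_\beta^s)$ with $\norm{f}_{2,\mu,X_\beta^s} \lesssim \norm{u}_{\T_\mu(X_\beta^s)\cap L^2_\mu(X_\beta^{s+1})}$, again by $\abs{\xi}^\beta \le \omega_\beta(k,\xi)$. Then $u$ solves the Kolmogorov equation with data $(f,g)$, so by uniqueness (Theorem \ref{thm:uniqueweak}) it is given by \eqref{eq:fouriersol}; writing $u = v_0 + v_1$ with $v_0$ the homogeneous part ($v_0(0)=g$) and $v_1$ the inhomogeneous part ($v_1(0)=0$), Proposition \ref{prop:inhomogeneity} controls $\norm{v_1}_{2,\mu,X_\beta^{s+1}} \lesssim \norm{f}_{2,\mu,X_\beta^s}$. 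Hence $v_0 = u - v_1 \in L^2_\mu((0,T);X_\beta^{s+1})$ with $\norm{v_0}_{2,\mu,X_\beta^{s+1}} \lesssim \norm{u}_{\T_\mu(X_\beta^s)\cap L^2_\mu(X_\beta^{s+1})}$, and applying the necessary condition of Proposition \ref{prop:necessarycondiv} to the homogeneous solution $v_0$ gives $g \in X_\beta^{s+1/2-(1-\mu)}$ with the matching bound. Taking the infimum over all extensions $u$ then produces $\norm{g}_{X_\beta^{s+1/2-(1-\mu)}} \lesssim \norm{g}_{\tr}$.

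The main obstacle is that Proposition \ref{prop:necessarycondiv}, and the lower bound of Lemma \ref{lem:esteta} on which it rests, are only stated for $T = \infty$, whereas here $v_0$ is known to lie in $L^2_\mu$ merely on the finite interval $(0,T)$; moreover the naive reduction to $T=\infty$ via the decomposition is blocked, since Proposition \ref{prop:inhomogeneity} degenerates at $T=\infty$ on the inhomogeneous scale because the zero-frequency mode makes $\int_0^\infty e_\beta\,\dx\tau$ diverge. I would therefore prove a finite-$T$ version of the lower estimate $\eta_\beta(k,\xi) \gtrsim \omega_\beta(k,\xi)^{2(s+1/2-(1-\mu))}$, with a $T$-dependent constant, by revisiting the proof of Lemma \ref{lem:esteta}: for high frequencies the relevant mass of the integrand already concentrates at times $t\lesssim \abs{\xi}^{-\beta}$ or $t\lesssim\abs{k}^{-\beta/(\beta+1)}$, both eventually below any fixed $T$, so truncating the integral at $T$ costs only a constant, while for low frequencies $\omega_\beta(k,\xi)\approx 1$ and the claim reduces to the positivity of $\int_0^{T_0} t^{2-2\mu}\,\dx t$, which is finite and strictly positive since $2-2\mu\ge 0$ for $\mu\le 1$. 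Tracking the intermediate-frequency regime so as to obtain a single $T$-dependent constant is the one genuinely new computation; the $T$-independence of the trace space noted after Proposition \ref{prop:contabstract} then guarantees that the resulting norm equivalence does not secretly depend on the chosen $T$.
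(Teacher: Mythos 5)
Your proposal is correct and its skeleton is the same as the paper's: one inclusion via the homogeneous solution and Proposition \ref{prop:suffcondiv}, the other by splitting an arbitrary extension $u$ into an inhomogeneous part $v_1$ controlled by Proposition \ref{prop:inhomogeneity} and a homogeneous part $v_0=u-v_1$, identified with the Fourier formula \eqref{eq:fouriersol} through linearity and Theorem \ref{thm:uniqueweak}, and then fed into Proposition \ref{prop:necessarycondiv}. There are two places where you genuinely depart from the paper, both to your credit. First, the paper does not track constants in the reverse direction: it only shows that the inclusion $\iota\colon X_\beta^{s+1/2-(1-\mu)}\to \tr_{\mu,\mathrm{kin}}(X_\beta^s)$ is a continuous bijection and then invokes the open mapping theorem to get the topological isomorphism, whereas your explicit bound $\norm{g}_{X_\beta^{s+1/2-(1-\mu)}}\lesssim \norm{u}_{\T_\mu(X_\beta^s)\cap L^2_\mu(X_\beta^{s+1})}$, followed by the infimum over extensions, is quantitative and needs no completeness argument. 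Second, and more importantly, the finite-$T$ obstruction you isolate is exactly the point the paper glosses over: its proof applies Proposition \ref{prop:necessarycondiv} (stated only for $T=\infty$) together with ``an approximation argument, the linearity of solutions and the uniqueness,'' without explaining the bridge from $(0,T)$ to $(0,\infty)$. Your diagnosis that the naive reduction fails is accurate, both because the constant in Proposition \ref{prop:inhomogeneity} degenerates as $T\to\infty$ on the inhomogeneous scale and, more fundamentally, because for $g\in X_\beta^{s+1/2-(1-\mu)}$ the homogeneous solution $v_0$ itself need not belong to $L^2_\mu((0,\infty);X_\beta^{s+1})$ (low frequencies of $\hat g$ decay too slowly, the relevant time scale $\abs{\xi}^{-\beta}\wedge\abs{k}^{-\beta/(\beta+1)}$ blowing up at the origin). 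Your proposed finite-$T$ strengthening of the lower bound in Lemma \ref{lem:esteta}, namely $\eta_\beta(k,\xi)\ge c(T)\,\omega_\beta(k,\xi)^{2(s+1/2-(1-\mu))}$, is the right repair and does hold by the frequency splitting you sketch: at high frequencies the mass of the integral concentrates at times below any fixed $T$, while at low frequencies $\omega_\beta\approx 1$ and $e_\beta\gtrsim_T 1$, so $\eta_\beta\gtrsim \int_0^T t^{2-2\mu}\,\dx t>0$. This lemma is precisely what substantiates the paper's unproved remark, made just before Proposition \ref{prop:necessarycondiv}, that the necessary condition ``holds true in the case of finite $T$, too,'' so your argument is not merely equivalent to the paper's but closes a gap in it.
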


\begin{proof}
	Let $g \in X_\beta^{s+1/2-(1-\mu)}$, then according to Proposition \ref{prop:suffcondiv} the respective solution of the Kolmogorov equation with initial value $g$ and $f = 0$ satisfies $u \in \T_\mu((0,T);X_\beta^s) \cap L^2_\mu((0,T);X_\beta^{s+1})$ and the estimate 
	\begin{equation*}
		\norm{g}_{\tr} \le \norm{u}_{ \T_\mu(X_\beta^s) \cap L^2_\mu(X_\beta^{s+1})} \le C\norm{g }_{X_\beta^{s+1/2-(1-\mu)}}.
	\end{equation*} 
	We claim that the natural inclusion mapping 
	$$\iota \colon  X_\beta^{s+1/2-(1-\mu)} \to \tr_{\mu,\mathrm{kin}}(X_\beta^s)$$
	is surjective. To see this let $y \in \tr_{\mu,\mathrm{kin}}(X_\beta^s)$, i.e.\ there is a $u \in \T_\mu((0,T);X_\beta^s) \cap L^2_\mu((0,T);X_\beta^{s+1})$ such that $u (0) = y$. As a consequence of Proposition \ref{prop:necessarycondiv}, an approximation argument, the linearity of solutions and the uniqueness it must then hold $y \in X_\beta^{s+1/2-(1-\mu)}$. By the open mapping theorem the operator $\iota$ is an isomorphism. 
\end{proof}

\begin{coro} \label{cor:continHxv}
	For every $T  >0$, $\mu \in (\frac{1}{2},1]$, $\beta \in (0,2]$ and any $s \ge -1/2$ we have
	\begin{equation*}
		\T_\mu((0,T);X_\beta^s) \cap L^2_\mu((0,T);X_\beta^{s+1}) \hookrightarrow C([0,T]; X_\beta^{s+1/2-(1-\mu)}) 
	\end{equation*}
	continuously. 
\end{coro}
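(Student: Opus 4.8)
The plan is to obtain this corollary as a direct composition of the two preceding results, Proposition~\ref{prop:contabstract} and Theorem~\ref{thm:chartraceL2}, so that essentially no new computation is required.

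First I would invoke Proposition~\ref{prop:contabstract}, which already furnishes the continuous embedding
\[
	\T_\mu((0,T);X_\beta^s) \cap L^2_\mu((0,T);X_\beta^{s+1}) \hookrightarrow C([0,T]; \tr_{\mu,\mathrm{kin}}(X_\beta^s)).
\]
This is the substantive ingredient: the genuine analytic work, namely the translation-semigroup argument together with the reflection-type extension from $(0,T)$ to $(0,\infty)$, has already been carried out there. The remaining task is purely one of re-coordinatizing the target space.

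Second I would appeal to Theorem~\ref{thm:chartraceL2}, which identifies the abstract trace space with a concrete anisotropic Sobolev space, $(\tr_{\mu,\mathrm{kin}}(X_\beta^s), \norm{\cdot}_{\tr}) \cong X_\beta^{s+1/2-(1-\mu)}$, the isomorphism being the natural inclusion $\iota$ and hence, by the open mapping theorem invoked in that proof, a topological isomorphism with equivalent norms. Composing a continuous curve $t \mapsto u(t) \in \tr_{\mu,\mathrm{kin}}(X_\beta^s)$ with this fixed linear homeomorphism yields a continuous curve with values in $X_\beta^{s+1/2-(1-\mu)}$, and the two sup-in-time norms are comparable up to the equivalence constant from Theorem~\ref{thm:chartraceL2}. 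Consequently $C([0,T]; \tr_{\mu,\mathrm{kin}}(X_\beta^s)) = C([0,T]; X_\beta^{s+1/2-(1-\mu)})$ as Banach spaces with equivalent norms, and chaining this identity with the embedding above gives both the stated embedding and its continuity.

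I do not expect any real obstacle here: the only point requiring minor care is to observe that replacing the trace norm by the equivalent $X_\beta^{s+1/2-(1-\mu)}$-norm does not disturb the uniform-in-time continuity, which is immediate precisely because the identification is effected by a single $t$-independent isomorphism $\iota$ rather than a time-dependent family. All of the analytic difficulty has already been absorbed into Proposition~\ref{prop:contabstract} and Theorem~\ref{thm:chartraceL2}, so the corollary is genuinely a formal consequence of the two.
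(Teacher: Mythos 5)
Your proposal is correct and is essentially the paper's own argument: the paper states this corollary without further proof precisely because it is the composition of Proposition~\ref{prop:contabstract} with the identification of the trace space in Theorem~\ref{thm:chartraceL2}, exactly as you describe. Your added remark that the time-independent isomorphism $\iota$ (a topological isomorphism by the open mapping theorem) preserves continuity in time and yields equivalent sup-norms is the same routine verification the paper leaves implicit.
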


Let us now specify the regularization property made visible by the temporal weight $t^{1-\mu}$. For all $\delta\in (0,T)$ we have $L^2_\mu(\delta,T) \hookrightarrow L^2(\delta,T)$. This implies
\begin{align*}
	\T_\mu((0,T);X_\beta^s) \cap L^2_\mu((0,T);X_\beta^{s+1}) &\hookrightarrow \T_\mu((\delta,T));X_\beta^s) \cap L^2_\mu((\delta,T));X_\beta^{s+1}) \\
	&\hookrightarrow \T((\delta,T);X_\beta^s) \cap L^2((\delta,T);X_\beta^{s+1}) \\
	&\hookrightarrow C([\delta,T], X_\beta^{s+1/2}).
\end{align*}
We have proven the following corollary.

\begin{coro} \label{cor:gainofreg}
	For every $T  >0$, $\mu \in (\frac{1}{2},1]$, $\beta \in (0,2]$ and all $s \ge 0$ we have
	\begin{equation*}
		\T_\mu((0,T);X_\beta^s) \cap L^2_\mu((0,T);X_\beta^{s+1}) \hookrightarrow C((0,T]; X_\beta^{s+1/2}).
	\end{equation*}
\end{coro}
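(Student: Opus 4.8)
The plan is to make precise the single useful observation behind the displayed chain of embeddings preceding the statement: away from $t=0$ the temporal weight is harmless, so one may pass to the unweighted scale and thereby extract the full regularity index $s+1/2$ rather than the reduced index $s+1/2-(1-\mu)$ furnished by Corollary~\ref{cor:continHxv}. Thus the whole point is to localize to subintervals $[\delta,T]$ with $\delta>0$ and let $\delta\to 0$.

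First I would fix an arbitrary $\delta\in(0,T)$ and restrict a function $u\in\T_\mu((0,T);X_\beta^s)\cap L^2_\mu((0,T);X_\beta^{s+1})$ to the interval $(\delta,T)$. Since $\mu\in(1/2,1]$ gives $2-2\mu\ge 0$, the weight obeys $\delta^{2-2\mu}\le t^{2-2\mu}\le T^{2-2\mu}$ for $t\in[\delta,T]$, so it is bounded above and below by strictly positive constants. Consequently $L^2_\mu((\delta,T);Y)$ and $L^2((\delta,T);Y)$ carry equivalent norms for any Banach space $Y$, and the same holds for the transport component, so the restriction map embeds our space continuously into the unweighted space $\T((\delta,T);X_\beta^s)\cap L^2((\delta,T);X_\beta^{s+1})$.

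Next I would apply Corollary~\ref{cor:continHxv} in the unweighted case $\mu=1$ on the interval $(\delta,T)$. Since the trace index there becomes $s+1/2-(1-\mu)=s+1/2$, this yields $u\in C([\delta,T];X_\beta^{s+1/2})$ together with a norm bound. As $\delta\in(0,T)$ is arbitrary and the continuous representatives obtained on $[\delta,T]$ agree on overlaps, they patch to a single representative in $C((0,T];X_\beta^{s+1/2})$; tracking the constants through the chain gives continuity of the embedding. I would remark in passing that the argument is valid for all $s\ge -1/2$, the hypothesis $s\ge 0$ merely keeping us within the strong-solution regime.

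The main point requiring care is the application of Corollary~\ref{cor:continHxv} on the subinterval $(\delta,T)$ rather than on $(0,T)$, on which it is stated; note in particular that the transformation $\Gamma$ does not commute with translation in time, so one cannot simply reparametrize. I would resolve this either by observing that the trace theory of Section~\ref{sec:maxkinL2}, namely Proposition~\ref{prop:contabstract} and Theorem~\ref{thm:chartraceL2}, goes through verbatim on any bounded time interval with the same proofs, or, more concretely, by extending $u|_{(\delta,T)}$ to an element of $\T((0,T);X_\beta^s)\cap L^2((0,T);X_\beta^{s+1})$ through a reflection-and-cutoff at $t=\delta$ as in the proof of Proposition~\ref{prop:contabstract}, and then invoking Corollary~\ref{cor:continHxv} directly on $(0,T)$.
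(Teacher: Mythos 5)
Your proof is correct and is essentially the paper's own argument: restrict to $(\delta,T)$, use $2-2\mu\ge 0$ to pass from the weighted to the unweighted spaces there, apply Corollary \ref{cor:continHxv} with $\mu=1$ to obtain $u\in C([\delta,T];X_\beta^{s+1/2})$, and let $\delta\to 0$. Your final worry is in fact unnecessary: since the transport operator $\partial_t+v\cdot\nabla_x$ is autonomous, the time shift $u\mapsto u(\cdot+\delta)$ maps $\T((\delta,T);X_\beta^s)$ isometrically onto $\T((0,T-\delta);X_\beta^s)$ (the transformation $\Gamma$ appears only inside the proofs, not in the statement being invoked), so one may indeed simply reparametrize.
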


We are now able to state and prove the main results of this article.

\begin{theorem} \label{thm:maxreg}
	Let $T>0$ and $\mu \in (\frac{1}{2},1]$, $\beta \in (0,2]$ and $s \ge -1/2$. The fractional Kolmogorov equation of order $\beta$ satisfies the kinetic maximal $L^2_\mu(X_\beta^s)$-regularity property. 
\end{theorem}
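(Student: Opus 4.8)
The plan is to exploit the linearity of the equation and reduce to the two special cases treated in Sections \ref{sec:inhomo} and \ref{sec:homo}, in which all the hard analytic estimates have already been established. Uniqueness is nothing but Theorem \ref{thm:uniqueweak}, so the task reduces to producing a solution of the prescribed regularity. Given $f \in L^2_\mu((0,T);X_\beta^s)$ and $g \in \tr_{\mu,\mathrm{kin}}(X_\beta^s)$, I take as candidate the function $u$ defined by the explicit Fourier representation \eqref{eq:fouriersol} and decompose it as $u = u_f + u_g$, where $u_f$ carries the data $(f,0)$ and $u_g$ the data $(0,g)$.

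For the spatial regularity the crucial point is that, by Theorem \ref{thm:chartraceL2}, the hypothesis $g \in \tr_{\mu,\mathrm{kin}}(X_\beta^s)$ is equivalent to $g \in X_\beta^{s+1/2-(1-\mu)}$. Then Proposition \ref{prop:inhomogeneity} applied to $u_f$ and Proposition \ref{prop:suffcondiv} applied to $u_g$ together give $u \in L^2_\mu((0,T);X_\beta^{s+1})$. Since $\omega_\beta \ge 1$ we have $\omega_\beta^s \le \omega_\beta^{s+1}$ pointwise, hence the continuous embedding $X_\beta^{s+1} \hookrightarrow X_\beta^s$ and in particular $u \in L^2_\mu((0,T);X_\beta^s)$.

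It then remains to check that $u$ solves \eqref{eq:kol} distributionally and belongs to $\T_\mu((0,T);X_\beta^s)$. Differentiating \eqref{eq:fouriersol} in $t$ (first for Schwartz data, then in general by the approximation argument announced in Section \ref{sec:prelim}) shows that $\hat u$ satisfies the transformed equation $\partial_t \hat u - k \cdot \nabla_\xi \hat u = -\abs{\xi}^\beta \hat u + \hat f$; this is most transparent after passing to $\Gamma u$, whose Fourier transform $\widehat{\Gamma u}(t,k,\xi) = \hat u(t,k,\xi - tk)$ obeys the pointwise identity $\partial_t \widehat{\Gamma u} = -\abs{\xi - tk}^\beta \widehat{\Gamma u} + \widehat{\Gamma f}$, the multiplier on the right being exactly the operator $A_\beta(t)$ from Theorem \ref{thm:uniqueweak}. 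Because $\abs{\xi}^\beta \le \omega_\beta(k,\xi)$, the fractional Laplacian $(-\Delta_v)^{\beta/2}$ maps $X_\beta^{s+1}$ boundedly into $X_\beta^s$, so that $(-\Delta_v)^{\beta/2} u \in L^2_\mu((0,T);X_\beta^s)$. Reading off the equation, $\partial_t u + v \cdot \nabla_x u = f - (-\Delta_v)^{\beta/2} u \in L^2_\mu((0,T);X_\beta^s)$, which is precisely $u \in \T_\mu((0,T);X_\beta^s)$. Finally, Corollary \ref{cor:continHxv} endows $u$ with a well-defined trace in $X_\beta^{s+1/2-(1-\mu)}$, and evaluating \eqref{eq:fouriersol} at $t = 0$, where $e_\beta(0,\cdot) = 1$ and the convolution term vanishes, gives $\hat u(0) = \hat g$, i.e.\ $u(0) = g$.

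I expect the only genuinely delicate point to be the rigorous justification that \eqref{eq:fouriersol} defines a distributional solution and that the formal differentiation in $t$ is licit; this is dealt with by establishing the identities first for Schwartz data and then invoking density together with the continuity of the maps involved, all remaining steps being immediate consequences of the estimates proven in the previous sections.
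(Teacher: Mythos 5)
Your proposal is correct and takes essentially the same route as the paper: the paper's own proof likewise takes the Fourier-representation candidate, invokes Theorem \ref{thm:chartraceL2} to identify the trace space, combines Proposition \ref{prop:inhomogeneity} and Proposition \ref{prop:suffcondiv} for existence, and cites Theorem \ref{thm:uniqueweak} for uniqueness. The only difference is that you spell out the verification that the candidate actually lies in $\T_\mu((0,T);X_\beta^s)$ and attains the initial value --- via the boundedness of $(-\Delta_v)^{\beta/2}\colon X_\beta^{s+1} \to X_\beta^s$ and continuity in the trace space --- steps the paper leaves implicit (this argument appears there only in the remark following the definition of kinetic maximal $L^2_\mu(X_\beta^s)$-regularity).
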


\begin{proof}
	Let $f \in L^2_\mu((0,T);X_\beta^s)$ and $g \in \tr_{\mu,\mathrm{kin}}(X_\beta^s) \cong X_\beta^{s+1/2-(1-\mu)}$. According to Proposition \ref{prop:inhomogeneity} and Proposition \ref{prop:suffcondiv} there exists a solution $u \in \T_\mu((0,T);X_\beta^s) \cap L^2_\mu((0,T);X_\beta^{s+1})$ of the Kolmogorov equation. Uniqueness has been proven in Theorem \ref{thm:uniqueweak}. 
\end{proof}

\begin{remark}
	The reason why, in general, it only makes sense to consider the case $T< \infty$, is that the Kolmogorov semigroup is not exponentially stable and thus we cannot hope that $u \in L^2_\mu((0,\infty);L^2(\R^{2n}))$ holds in general.
\end{remark}

\section{Instantaneous Regularization of solutions to the homogeneous Kolmogorov equation}
\label{sec:regCinf}

Using the kernel representation for solutions of the Kolmogorov semigroup one can easily deduce that solutions regularize instantaneously. 
In this section, we are going to investigate this regularizing property from a more abstract point of view using in particular the kinetic maximal $L^2_\mu(X_\beta^s)$-regularity property. We also refer to \cite{lorenzi_analytical_2017}, where in case $\beta=2$ it is shown that for bounded continuous initial values the solution of the
homogeneous Kolmogorov equation is bounded and $C^\infty$-smooth for $t>0$.

\begin{prop} \label{prop:t2laplace}
	If $f = 0$ and $g \in \dot{H}_{x}^\frac{\beta/2}{\beta+1}(\R^{2n})$, then the function $u$ with Fourier transform given by equation \eqref{eq:fouriersol} satisfies the inequality
	\begin{equation*}
		\int_0^\infty \int_{\R^n} \int_{\R^n} t^{2\beta}\abs{k}^{2\beta} \abs{\hat{u}(t,k,\xi)}^2 \dx \xi \dx k \dx t \le c \int_{\R^{2n}} \abs{k}^{\frac{\beta}{\beta+1}} \abs{\hat{g}(k,\xi)}^2 \dx k \dx \xi
	\end{equation*}
	for some constant $c = c(\beta)$. In particular, we have $t^\beta (-\Delta_x)^{\beta/2} u \in L^2((0,\infty);L^2(\R^{2n}))$.
\end{prop}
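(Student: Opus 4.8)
The plan is to exploit the explicit representation of the solution. With $f = 0$, formula \eqref{eq:fouriersol} reduces to $\hat u(t,k,\xi) = \hat g(k,\xi+tk)\,e_\beta(t,k,\xi)$. Inserting this into the left-hand side and performing, at fixed $t$ and $k$, the change of variables $\tilde\xi = \xi + tk$ (which has unit Jacobian) moves the shift off $\hat g$ and onto the kernel, turning the expression into
\begin{equation*}
	\int_0^\infty\!\!\int_{\R^n}\!\!\int_{\R^n} t^{2\beta}\abs{k}^{2\beta}\abs{\hat g(k,\tilde\xi)}^2 e_\beta(t,k,\tilde\xi - tk)^2\,\dx\tilde\xi\,\dx k\,\dx t.
\end{equation*}
By Fubini's theorem it then suffices to prove the pointwise multiplier bound $\Phi(k,\xi)\le c\abs{k}^{\beta/(\beta+1)}$, where
\begin{equation*}
	\Phi(k,\xi) := \abs{k}^{2\beta}\int_0^\infty t^{2\beta}\, e_\beta(t,k,\xi - tk)^2\,\dx t,
\end{equation*}
since integrating this multiplier against $\abs{\hat g(k,\xi)}^2$ reproduces exactly the claimed inequality. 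This mirrors the strategy used for $\eta_\beta$ in Lemma \ref{lem:esteta}, though the weight $t^{2\beta}\abs{k}^{2\beta}$ does not fit the $\omega_\beta$-framework employed there, so $\Phi$ must be estimated directly.

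For the multiplier bound I would invoke Lemma \ref{lem:upperbounde}, which upon the replacement $\xi\mapsto\xi-tk$ in its upper estimate gives $e_\beta(t,k,\xi-tk)\le \exp(-c_2\abs{\xi}^\beta t - c_2\abs{k}^\beta t^{\beta+1})$. Discarding the harmless factor $\exp(-2c_2\abs{\xi}^\beta t)\le 1$ leaves
\begin{equation*}
	\Phi(k,\xi)\le \abs{k}^{2\beta}\int_0^\infty t^{2\beta}\exp(-2c_2\abs{k}^\beta t^{\beta+1})\,\dx t.
\end{equation*}
The scaling substitution $s = \abs{k}^{\beta/(\beta+1)}t$ renders the exponent independent of $k$ and reduces the integral to $c(\beta)\abs{k}^{-\beta(2\beta+1)/(\beta+1)}$, where the constant $c(\beta)=\int_0^\infty s^{2\beta}\exp(-2c_2 s^{\beta+1})\,\dx s$ is finite because $2\beta>-1$ (convergence at the origin) and the integrand decays exponentially (convergence at infinity).

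Collecting the powers of $\abs{k}$, the resulting exponent is $2\beta - \beta(2\beta+1)/(\beta+1) = \beta/(\beta+1)$, which yields precisely $\Phi(k,\xi)\le c(\beta)\abs{k}^{\beta/(\beta+1)}$ and hence the displayed estimate. For the concluding assertion, observe that $t^\beta(-\Delta_x)^{\beta/2}u$ has Fourier transform $t^\beta\abs{k}^\beta\hat u$, so its squared $L^2((0,\infty);L^2(\R^{2n}))$-norm equals the left-hand side of the inequality; since $g\in\dot H_x^{(\beta/2)/(\beta+1)}(\R^{2n})$ means $\int_{\R^{2n}}\abs{k}^{\beta/(\beta+1)}\abs{\hat g(k,\xi)}^2\,\dx k\,\dx\xi$ is finite, the right-hand side is finite and the membership $t^\beta(-\Delta_x)^{\beta/2}u\in L^2((0,\infty);L^2(\R^{2n}))$ follows at once. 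I expect the only point demanding attention to be the exponent bookkeeping in the scaling step; there is no genuine analytic obstacle, as the single exponential factor already supplies the full decay needed.
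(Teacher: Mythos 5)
Your proposal is correct and follows essentially the same route as the paper: reduce to the pointwise multiplier bound on $\Phi(k,\xi)=\abs{k}^{2\beta}\int_0^\infty t^{2\beta}e_\beta(t,k,\xi-tk)^2\,\dx t$ via the change of variables and Fubini, then apply the upper bound of Lemma \ref{lem:upperbounde}, drop the $\xi$-dependent exponential, and evaluate the remaining integral by scaling. Your exponent bookkeeping, yielding $\Phi(k,\xi)\le c(\beta)\abs{k}^{\beta/(\beta+1)}$, is exactly what the stated inequality requires (the paper's proof writes the final bound with exponent $\frac{\beta/2}{\beta+1}$, an apparent typo that your computation implicitly corrects).
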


\begin{proof}
	In a similar fashion as in the proof of Proposition \ref{prop:suffcondiv} we only need to estimate the function
	\begin{equation*}
		\nu(k,\xi) = \int_0^\infty t^{2\beta} \abs{k}^{2\beta} e_\beta(t,k,\xi-tk) \dx t.
	\end{equation*}
	We recall that $e_\beta(t,k,\xi-tk) \le \exp(- c\abs{k}^\beta t^{\beta+1})$ and conclude 
	\begin{equation*}
		\nu(k,\xi) \le \int_{0}^\infty t^{2\beta} \abs{k}^{2\beta} \exp(- c\abs{k}^{\beta}t^{\beta+1}) \dx t = c \abs{k}^\frac{\beta/2}{\beta+1}
	\end{equation*}
	for some constant $c>0$.
\end{proof}

The preceding proposition suggests that, when choosing the space of kinetic maximal $L^2$-regularity for strong solutions one could also take into account the integrability of $(-\Delta_x)^{\beta/2} u$ in a time weighted Lebesgue space, i.e. to describe the spatial regularity by
\begin{equation*}
	\T((0,T);L^2(\R^{2n})) \cap L^2((0,T);X_\beta^{1}) \cap L^2_{1-\beta}((0,T);\dot{H}_{x}^{\beta}(\R^{2n})).
\end{equation*}

\begin{lemma} \label{lem:continH13x}
	The family of operators $(\Gamma^{-1}(t))_{t \ge 0}$ defined by $\Gamma^{-1}(t) \colon \dot{H}_x^r(\R^{2n}) \to \dot{H}^r_x(\R^{2n})$ for any $t \in \R$ is a strongly continuous group of isometries for any $r \in [0,\infty)$. 
\end{lemma}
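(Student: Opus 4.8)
The plan is to transfer everything to the Fourier side, where $\Gamma^{-1}(t)$ acts as a pure shift in the $\xi$-variable, and then to reduce the strong continuity to the $L^2$-statement already established in Proposition \ref{prop:GamL2isom}. First I would compute the Fourier symbol of $\Gamma^{-1}(t)$. Recalling $[\Gamma^{-1}(t)u](x,v) = u(x-tv,v)$ and substituting $y = x-tv$ in the defining integral of the Fourier transform gives
\[
	\widehat{\Gamma^{-1}(t)u}(k,\xi) = \hat{u}(k,\xi+tk),
\]
so $\Gamma^{-1}(t)$ leaves the $x$-frequency $k$ untouched and merely translates the $v$-frequency by $tk$. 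Since the $\dot{H}_x^r$-norm is $\int_{\R^{2n}} \abs{k}^{2r}\abs{\hat{u}(k,\xi)}^2 \,\dx k\,\dx\xi$ and the weight $\abs{k}^{2r}$ is invariant under $\xi\mapsto\xi+tk$ at fixed $k$, the substitution $\tilde{\xi}=\xi+tk$ immediately yields $\norm{\Gamma^{-1}(t)u}_{\dot{H}_x^r}=\norm{u}_{\dot{H}_x^r}$, so each $\Gamma^{-1}(t)$ is an isometry. The group law $\Gamma^{-1}(t)\Gamma^{-1}(s)=\Gamma^{-1}(t+s)$ and $\Gamma^{-1}(0)=\id$ follow either directly from $u(x-tv-sv,v)=u(x-(t+s)v,v)$ or from composing the shifts on the Fourier side, with inverse $\Gamma^{-1}(-t)=\Gamma(t)$.

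For the strong continuity I would exploit that the operator $D_x^r=(-\Delta_x)^{r/2}$ commutes with $\Gamma^{-1}(t)$: on the Fourier side both $D_x^r\Gamma^{-1}(t)u$ and $\Gamma^{-1}(t)D_x^r u$ carry the symbol $\abs{k}^r\hat{u}(k,\xi+tk)$, so the identity $D_x^r\Gamma^{-1}(t)=\Gamma^{-1}(t)D_x^r$ holds first on Schwartz functions and then, via the density used to define $\dot{H}_x^r$ together with the boundedness of the operators involved, on all of $\dot{H}_x^r$. Writing $w:=D_x^r u$, which is a genuine element of $L^2(\R^{2n})$ for $u\in\dot{H}_x^r$, I then obtain
\[
	\norm{\Gamma^{-1}(t)u-u}_{\dot{H}_x^r} = \norm{D_x^r(\Gamma^{-1}(t)u-u)}_{\dot{L}^2} = \norm{\Gamma^{-1}(t)w-w}_{2}.
\]
Proposition \ref{prop:GamL2isom} asserts that $(\Gamma^{-1}(t))_{t\in\R}$ is strongly continuous on $L^2(\R^{2n})$, so the right-hand side tends to $0$ as $t\to 0$; combined with the isometry property, which gives $\norm{\Gamma^{-1}(t)u-\Gamma^{-1}(t_0)u}_{\dot{H}_x^r}=\norm{\Gamma^{-1}(t-t_0)u-u}_{\dot{H}_x^r}$, this yields strong continuity at every $t_0\in\R$.

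The only genuinely delicate point is the bookkeeping around the homogeneous space: I must ensure that $D_x^r u$ is an honest $L^2$-function, so that Proposition \ref{prop:GamL2isom} applies verbatim, and that the commutation identity survives the passage from Schwartz representatives to the completion modulo polynomials. Both are handled by approximating $u$ in $\dot{H}_x^r$ by Schwartz functions $w_j$, noting that $D_x^r w_j\to D_x^r u$ in $L^2$ by the very definition of the $\dot{H}_x^r$-norm, and using the $L^2$-continuity of $\Gamma^{-1}(t)$ to pass to the limit in $D_x^r\Gamma^{-1}(t)w_j=\Gamma^{-1}(t)D_x^r w_j$. No estimate beyond those already available is needed; the isometry and group properties are exact Fourier computations, and the strong continuity is inherited from the $L^2$-case through the intertwining by $D_x^r$.
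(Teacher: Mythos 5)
Your proposal is correct and follows essentially the same route as the paper: the paper's own (one-line) proof rests precisely on the commutation of $\Gamma(t)$ with $D_x^r$, which reduces both the isometry and the strong continuity on $\dot{H}_x^r(\R^{2n})$ to the $L^2$-statement of Proposition \ref{prop:GamL2isom}. You have simply written out in full the Fourier computation, the density argument for the homogeneous space, and the intertwining step that the paper leaves implicit.
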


\begin{proof}
 This follows immediately from the fact that $\Gamma(t)$ and $D_x^r$ commute for all $r \ge 0 $.
\end{proof}

	Let $0<\delta<T$, then
\begin{align*}
	&\Gamma( \T((0,T);L^2(\R^{2n})) \cap L^2_{1-\beta}((0,T);\dot{H}_{x}^{\beta}(\R^{2n})) = H^1((0,T); L^2(\R^{2n})) \cap L^2_{1-\beta}((0,T);\dot{H}_x^\beta(\R^{2n})) \\
	&\hookrightarrow H^1((\delta,T); L^2(\R^{2n})) \cap L^2((\delta,T);H^\beta_x(\R^{2n})) \hookrightarrow C([\delta,T];H^{\beta/2}_x(\R^{2n})).
\end{align*}
	By Corollary \ref{cor:continHxv}, using Lemma \ref{lem:continH13x} and writing $u = \Gamma^{-1}\Gamma u$ it follows that 
	$$\T((0,T);L^2(\R^{2n})) \cap L^2((0,T);X_\beta^{1}) \cap L^2_{1-\beta}((0,T);\dot{H}_{x}^{\beta}(\R^{2n})) \hookrightarrow C([\delta,T];H^{\beta/2}(\R^{2n}))$$
for all $\delta >0$. We have proven the following

\begin{coro}
	For all $\beta \in (0,2]$ we have
	$$\T((0,T);L^2(\R^{2n})) \cap L^2((0,T);X_\beta^{1}) \cap L^2_{1-\beta}((0,T);\dot{H}_{x}^{\beta}(\R^{2n}))  \hookrightarrow C((0,T];H^{\beta/2}(\R^{2n})).$$
\end{coro}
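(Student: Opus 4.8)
The statement is an immediate consequence of the embedding established in the discussion immediately preceding the corollary, which gives, for every fixed $\delta \in (0,T)$, the continuous embedding of the intersection space into $C([\delta,T]; H^{\beta/2}(\R^{2n}))$. I would therefore organize the argument in two stages: first recall why this $[\delta,T]$-embedding holds, then upgrade it to the half-open interval $(0,T]$.

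For the first stage, I write any $u$ in the intersection space as $u = \Gamma^{-1}\Gamma u$. Applying $\Gamma$ converts the transport regularity $\partial_t u + v\cdot\nabla_x u \in L^2((0,T);L^2(\R^{2n}))$ into genuine time regularity $\partial_t \Gamma u = \Gamma(\partial_t u + v\cdot\nabla_x u) \in L^2((0,T);L^2(\R^{2n}))$ by Proposition \ref{prop:TtoH1} with $s=0$, so that $\Gamma u \in H^1((0,T);L^2(\R^{2n}))$; simultaneously, since $\Gamma(t)$ commutes with $D_x^r$ (Lemma \ref{lem:continH13x}), the map $\Gamma$ preserves the $\dot{H}_x^\beta$ regularity, whence $\Gamma u \in L^2_{1-\beta}((0,T);\dot{H}_x^\beta(\R^{2n}))$. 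On each compact subinterval $[\delta,T]$ the weight $t^{2\beta}$ is bounded below by $\delta^{2\beta}$, so $L^2_{1-\beta}((\delta,T);\dot{H}_x^\beta) \hookrightarrow L^2((\delta,T);\dot{H}_x^\beta)$, and together with the $L^2$-control coming from $\Gamma u \in H^1((\delta,T);L^2)$ this upgrades the homogeneous $\dot{H}_x^\beta$ to the inhomogeneous $H_x^\beta$. The parabolic trace embedding $H^1((\delta,T);L^2)\cap L^2((\delta,T);H_x^\beta)\hookrightarrow C([\delta,T];H_x^{\beta/2})$, obtained by interpolation from $[L^2,H_x^\beta]_{1/2}=H_x^{\beta/2}$, then yields the $x$-regularity. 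Independently, Corollary \ref{cor:continHxv} applied with $\mu=1$ and $s=0$ gives $u \in C([0,T];X_\beta^{1/2})\hookrightarrow C([0,T];H_v^{\beta/2}(\R^{2n}))$, supplying the $v$-regularity. Since $H_x^{\beta/2}(\R^{2n})\cap H_v^{\beta/2}(\R^{2n})\cong H^{\beta/2}(\R^{2n})$ with equivalent norms, intersecting the two yields $u \in C([\delta,T];H^{\beta/2}(\R^{2n}))$.

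For the second stage I pass from $[\delta,T]$ to $(0,T]$ by observing that continuity of an $H^{\beta/2}$-valued function on the half-open interval is a purely local property: a map $u\colon (0,T]\to H^{\beta/2}(\R^{2n})$ is continuous if and only if its restriction to each $[\delta,T]$ with $0<\delta<T$ is continuous. Since every $t_0\in(0,T]$ lies in the interior of some such $[\delta,T]$, continuity at $t_0$ follows from the first stage, and hence every element of the intersection space admits a representative in $C((0,T];H^{\beta/2}(\R^{2n}))$. The main obstacle is not this $\delta\to 0$ passage, which is routine, but rather the bookkeeping of the first stage: one must check that the two independent sources of smoothing---the $x$-regularity recovered through the $\Gamma$-transform and the parabolic trace, and the $v$-regularity recovered from Corollary \ref{cor:continHxv}---genuinely combine into the isotropic space $H^{\beta/2}$, and that the homogeneous control $\dot{H}_x^\beta$ together with the $L^2$-bound dominates the full inhomogeneous norm $H_x^\beta$ uniformly on $[\delta,T]$. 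As this has already been carried out in the paragraph preceding the corollary, the proof itself reduces to invoking the $[\delta,T]$-embedding for all $\delta\in(0,T)$ and the locality of continuity.
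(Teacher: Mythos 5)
Your proposal is correct and takes essentially the same route as the paper's own argument: applying $\Gamma$ (via Proposition \ref{prop:TtoH1} and the commutation with $D_x^r$ from Lemma \ref{lem:continH13x}) to convert transport regularity into $H^1((\delta,T);L^2)\cap L^2((\delta,T);H_x^\beta)$, invoking the parabolic trace embedding into $C([\delta,T];H_x^{\beta/2})$, combining with the $v$-regularity $u\in C([0,T];X_\beta^{1/2})\hookrightarrow C([0,T];H_v^{\beta/2}(\R^{2n}))$ from Corollary \ref{cor:continHxv}, and then letting $\delta\to 0$. No gaps to report.
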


In comparison to Corollary \ref{cor:continHxv} we thus gain $\frac{\beta^2}{2(\beta+1)}$ more regularity in the position variable $x$.

Unfortunately, one can not expect that solutions to the nonhomogeneous Kolmogorov equation satisfy $u \in L^2_{1-\beta}((0,T);\dot{H}_{x}^{\beta}(\R^{2n}))$ in general. A possible indication for this can be seen by considering the singular integral operator
	\begin{equation*}
		R \colon L^2((0,T);L^2(\R^{2n})) \to L^2((0,T);L^2(\R^{2n})), \; (Rf)(t) = t^\beta\int_0^t (-\Delta_x)^\frac{\beta}{2} T(s) f(t-s) \dx s, 
	\end{equation*}
	where $T(t)$ stands for the Kolmogorov semigroup.
	A direct calculation, similar to the one in Lemma \ref{lem:decayestimsemi}, shows that 
	$\norm{(-\Delta_x)^\frac{\beta}{2} T(s)}_2\lesssim s^{-(\beta+1)}$, hence the singularity of this integral close to zero cannot be controlled for larger $t$ as the factor of $t^\beta$ appears only outside of the integral. We note that we have $\norm{D_x^{\frac{\beta}{\beta+1}} T(s)}_2\lesssim s^{-1}$ which gives yet another explanation of the appearance of the $\beta/(\beta+1)$ derivatives in $x$.
	
As the gain of regularity is only in the $x$-variable, it does not suffice to show smoothness of the solutions to the homogeneous Kolmogorov equation. In order to achieve this we are going to use the regularization property of the temporal weights. 

\begin{theorem} \label{TheoremSmoothing}
	For all $T>0$, $\mu \in (\frac{1}{2},1]$, $\beta \in (0,2]$, $s \ge -1/2$ and every solution $u \in \T_\mu((0,T);X_\beta^s) \cap L^2_\mu((0,T);X_\beta^{s+1})$ of the homogeneous Kolmogorov equation we have $u \in C^\infty((0,T) \times \R^{2n})$.
\end{theorem}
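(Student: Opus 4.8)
The plan is to fix an arbitrary $\tau\in(0,T)$ and prove $u\in C^\infty([\tau,T]\times\R^{2n})$; since $\tau$ is arbitrary this gives the theorem. The argument splits into an unlimited gain of \emph{spatial} regularity, driven by the temporal weight, followed by a bootstrap in \emph{time} based on the equation itself.

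\textbf{Spatial regularity.} The mechanism is the instantaneous gain produced by the weight, which can be iterated precisely because the right-hand side vanishes. Fix $\mu_\ast\in(\tfrac12,1)$ and choose restart times $0<t_1<\dots<t_N<\tau$. On any subinterval $(a,T)$ the weight $t^{2-2\mu}$ is bounded above and below, so Corollary \ref{cor:continHxv} applied with $\mu=1$ yields $u(t_1)\in X_\beta^{s+1/2}$. I now restart: $w:=u(\cdot+t_1)$ again solves the homogeneous equation, and by Theorem \ref{thm:chartraceL2} the space $X_\beta^{s+1/2}$ is exactly the trace space $\tr_{\mu_\ast,\mathrm{kin}}(X_\beta^{s_1})$ for $s_1:=s+(1-\mu_\ast)$, since $s_1+1/2-(1-\mu_\ast)=s+1/2$. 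Hence the solution furnished by Theorem \ref{thm:maxreg} with weight $\mu_\ast$ and datum $u(t_1)$ coincides with $w$ by uniqueness (Theorem \ref{thm:uniqueweak}), so $w\in\T_{\mu_\ast}((0,T-t_1);X_\beta^{s_1})\cap L^2_{\mu_\ast}((0,T-t_1);X_\beta^{s_1+1})$. A further application of Corollary \ref{cor:continHxv} on $(t_2-t_1,T-t_1)$ gives $u(t_2)=w(t_2-t_1)\in X_\beta^{s_1+1/2}=X_\beta^{s+1/2+(1-\mu_\ast)}$. Each restart raises the attained regularity by the fixed amount $1-\mu_\ast>0$, so after finitely many steps $u(t_N)\in X_\beta^{m}$ for any prescribed $m$, and a final restart yields $u\in C([\tau,T];X_\beta^{m})$.

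\textbf{From the Sobolev scale to smoothness in $(x,v)$.} Using $X_\beta^m\cong H_x^{m\beta/(\beta+1)}(\R^{2n})\cap H_v^{m\beta}(\R^{2n})$ one reads off the continuous embedding $X_\beta^m\hookrightarrow H^{m\beta/(\beta+1)}(\R^{2n})$ into the isotropic Sobolev space, because $(1+|k|^2+|\xi|^2)^{\sigma/2}\lesssim(1+|k|^2)^{\sigma/2}+(1+|\xi|^2)^{\sigma/2}$ with $\sigma=m\beta/(\beta+1)\le m\beta$. Since $m\beta/(\beta+1)\to\infty$, the Sobolev embedding theorem gives $u(t,\cdot)\in C^\infty(\R^{2n})$, and the continuity $u\in C([\tau,T];X_\beta^m)$ upgrades to $u\in C([\tau,T];C^k(\R^{2n}))$ for every $k$. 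Thus every purely spatial derivative $\partial_x^\alpha\partial_v^\gamma u$ exists, is bounded on $[\tau,T]\times\R^{2n}$, and is jointly continuous there. For the time bootstrap I differentiate the equation: from $\partial_t u=-(-\Delta_v)^{\beta/2}u-v\cdot\nabla_x u$ and $(-\Delta_v)^{\beta/2}\colon X_\beta^{m+1}\to X_\beta^m$, the first summand lies in $C([\tau,T];X_\beta^m)$ for every $m$, hence is jointly continuous and spatially smooth, while the second is a polynomial in $v$ times such a function. Therefore $\partial_t u$ is a classical derivative that is jointly continuous and $C^\infty$ in $(x,v)$, and iterating I expect each $\partial_t^j u$ to be a finite sum of terms $p(v)\,(Lu)$ with $p$ a polynomial and $L$ a finite-order operator built from $(-\Delta_v)^{\beta/2}$, $\nabla_x$ and their commutators, applied to $u\in\bigcap_m X_\beta^m$; each such term being jointly continuous and spatially smooth, all mixed derivatives $\partial_t^j\partial_x^\alpha\partial_v^\gamma u$ exist and are jointly continuous, which is exactly $u\in C^\infty([\tau,T]\times\R^{2n})$.

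\textbf{Main obstacle.} I expect the delicate point to be this time bootstrap in the genuinely fractional range $\beta\in(0,2)$. When a time derivative meets the transport term, the components of $v$ must be commuted past the \emph{nonlocal} operator $(-\Delta_v)^{\beta/2}$, and unlike the case $\beta=2$ these do not commute: one generates commutators $[(-\Delta_v)^{\beta/2},v_i]$, which are nonlocal operators of order $\beta-1$ rather than multiplication operators. The resolution I would pursue is to commute all polynomial factors outward, so that $(-\Delta_v)^{\beta/2}$ is only ever applied to the rapidly controlled function $u\in\bigcap_m X_\beta^m$ and never to the $v$-growing transport term directly, and then to verify that the resulting lower-order fractional commutators map $\bigcap_m X_\beta^m$ into the class of spatially smooth, locally bounded, jointly continuous functions. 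Equivalently, transforming via $\Gamma$ to the evolution $\partial_t(\Gamma u)=A_\beta(t)\,\Gamma u$ of the proof of Theorem \ref{thm:uniqueweak}, the same difficulty appears as the non-smoothness of $t\mapsto|\xi-tk|^\beta$ on the diagonal $\{\xi=tk\}$: for $\beta=2$ all symbols are polynomial and the bootstrap is immediate, whereas for $\beta<2$ the singular $t$-derivatives of the symbol must be balanced against the super-polynomial decay of $\hat u(t,\cdot)$ available for $t\ge\tau$.
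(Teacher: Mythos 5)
Your proposal follows essentially the same route as the paper's proof: the weight-driven instantaneous gain of half a derivative (Corollary \ref{cor:gainofreg}), iterated by restarting the solution at later times via the trace-space characterization (Theorem \ref{thm:chartraceL2}), kinetic maximal regularity (Theorem \ref{thm:maxreg}) and uniqueness (Theorem \ref{thm:uniqueweak}), yielding $u \in C((0,T];H^r(\R^{2n}))$ for every $r$, followed by a time bootstrap obtained by differentiating the equation. The commutator subtlety you flag for fractional $\beta$ in that last step is a genuine point, but the paper's own proof passes over it in a single sentence (``by iteratively differentiating the Kolmogorov equation''), so your treatment is, if anything, the more careful one there.
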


\begin{proof}
	Let $\frac{1}{2}<\nu<\mu $ and $\epsilon >0$. Then $u(\epsilon) \in X_\beta^{s+1/2}$ by Corollary \ref{cor:gainofreg}, and hence $w = u(\cdot + \epsilon) \in \T_\nu((0,T-\epsilon);X_\beta^{s+(1-\nu)}) \cap L^2_\nu((0,T-\epsilon);X_\beta^{s+1+(1-\nu)})$. This shows that $u(\frac{3}{2}\varepsilon) \in X_\beta^{s+1/2+(1-\nu)}$ by Corollary \ref{cor:gainofreg}. We have proven a gain of regularity of order $(1-\nu)$. This argument can be iterated by evaluating at $\epsilon_k = \sum_{j = 0}^k 2^{-j}$ to show that $u(2\varepsilon) \in H^r(\R^{2n})$ for any $r$. We deduce that $u \in L^\infty((\delta,T];H^r(\R^{2n}))$ for any $r \ge 0$ and all $\delta>0$, hence $u \in C((0,T];H^r(\R^{2n}))$. Consequently $u \in C^{0,\infty}((0,T]\times\R^{2n})$. The equation $\partial_t = -(-\Delta_v)^{\beta/2} u -v \cdot \nabla_x u$ gives $u \in C^{1,\infty}((0,T)\times\R^{2n})$, and thus, by iteratively differentiating the Kolmogorov equation it follows that $u \in C^\infty((0,T) \times \R^{2n})$.
\end{proof}

The following result extends Bouchut's theorem (\cite[Theorem 2.1]{bouchut_hypoelliptic_2002}) to the case with temporal weights.

\begin{theorem} \label{thm:advbouchut}
	For all $r \ge 0$  and any $u \in \T_\mu((0,T);L^2(\R^{2n})) \cap L^2_\mu((0,T);H_v^r(\R^{2n}))$, we have $u \in L^2_\mu((0,T);H_x^{r/(r+1)}(\R^{2n}))$.
\end{theorem}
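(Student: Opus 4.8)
The plan is to convert the transfer estimate into a statement about the \emph{initial trace} and then read it off from the kinetic maximal regularity theory already established. Set $g := \partial_t u + v\cdot\nabla_x u \in L^2_\mu((0,T);L^2(\R^{2n}))$ and $f := g + (-\Delta_v)^{r/2}u$. Since $u\in L^2_\mu((0,T);H_v^r(\R^{2n}))$ we have $(-\Delta_v)^{r/2}u\in L^2_\mu((0,T);L^2)$, so $f\in L^2_\mu((0,T);X_r^0)$, and by construction $u$ solves $\partial_t u + v\cdot\nabla_x u + (-\Delta_v)^{r/2}u = f$. I treat first $r\in(0,2]$ (the case $r=0$ is vacuous, and $r>2$ I return to below, since Theorem \ref{thm:maxreg} needs $\beta\in(0,2]$). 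By Theorem \ref{thm:continL2} with $s=0$ the trace $u_0:=u(0)\in L^2(\R^{2n})$ is well defined. The key point is that \emph{if} $u_0\in\tr_{\mu,\mathrm{kin}}(X_r^0)\cong X_r^{1/2-(1-\mu)}$, then Theorem \ref{thm:maxreg} yields a solution in $\T_\mu((0,T);X_r^0)\cap L^2_\mu((0,T);X_r^1)$ for the data $(f,u_0)$, which by the uniqueness Theorem \ref{thm:uniqueweak} must coincide with $u$; hence $u\in L^2_\mu((0,T);X_r^1)\hookrightarrow L^2_\mu((0,T);H_x^{r/(r+1)}(\R^{2n}))$, the assertion.

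So everything reduces to verifying $u_0\in X_r^{1/2-(1-\mu)}$, and for this I would peel off the inhomogeneity. Let $P$ solve the same equation with inhomogeneity $f$ and \emph{zero} initial value; Proposition \ref{prop:inhomogeneity} (with $s=0$) gives $P\in\T_\mu((0,T);X_r^0)\cap L^2_\mu((0,T);X_r^1)$, in particular $\||\xi|^r\widehat{P}\|_{L^2_\mu}<\infty$ because $\omega_r\ge|\xi|^r$. Then $w:=u-P$ solves the \emph{homogeneous} equation with $w(0)=u_0$, so $\hat w(t,k,\xi)=\hat{u_0}(k,\xi+tk)\,e_r(t,k,\xi)$, and $w\in L^2_\mu((0,T);\dot H_v^r)$ since both $u$ and $P$ are. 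Evaluating this velocity norm by Plancherel and the substitution $\xi\mapsto\xi-tk$ gives $\|w\|^2_{L^2_\mu(\dot H_v^r)}=\int_{\R^{2n}}|\hat{u_0}(k,\xi)|^2\,\Phi(k,\xi)\,\dx k\,\dx\xi$, where
\[
  \Phi(k,\xi)=\int_0^T t^{2-2\mu}\,|\xi-tk|^{2r}\,e_r(t,k,\xi-tk)^2\,\dx t .
\]

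The heart of the matter, and the step I expect to be the main obstacle, is the lower bound $\Phi(k,\xi)\gtrsim\bigl(|k|^{\frac{r}{r+1}}+|\xi|^r\bigr)^{2(1/2-(1-\mu))}$. This is a homogeneous analogue of the lower estimate in Lemma \ref{lem:esteta}, but carrying \emph{only} the velocity weight $|\xi-tk|^{2r}$ in the integrand, and it is exactly where Bouchut's regularity transfer is hidden: the free evolution turns the time-integrated velocity regularity of $w$ into \emph{anisotropic} regularity of the datum $u_0$, including the position part $|k|^{r/(r+1)}$ that the hypotheses never control directly. I would prove it along the lines of Lemma \ref{lem:esteta}, using $e_r(t,k,\xi-tk)\ge\exp(-c|\xi|^r t-c|k|^r t^{r+1})$ from Lemma \ref{lem:upperbounde}, splitting into $t|k|\lessgtr|\xi|$, and in the regime $t|k|\gtrsim|\xi|$ rescaling $t=|k|^{-r/(r+1)}s$ to extract the factor $|k|^{2(1/2-(1-\mu))\frac{r}{r+1}}$ (the computation at $\xi=0$ already produces precisely this exponent). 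Granting the bound, $\int|\hat{u_0}|^2\bigl(|k|^{\frac{r}{r+1}}+|\xi|^r\bigr)^{2(1/2-(1-\mu))}<\infty$, i.e.\ $u_0\in\dot X_r^{1/2-(1-\mu)}$; combined with $u_0\in L^2$ this gives $u_0\in X_r^{1/2-(1-\mu)}$ and closes the loop via Theorem \ref{thm:chartraceL2}.

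Two technical points remain. First, the lower estimate of Lemma \ref{lem:esteta} is phrased for $T=\infty$, so for finite $T$ I would either invoke the $T$-independence of $\tr_{\mu,\mathrm{kin}}(X_r^0)$ recorded after Theorem \ref{thm:continL2}, or extend $w$ to $(0,\infty)$ by the free Kolmogorov flow, whose velocity-Sobolev tail is integrable thanks to the exponential decay of $e_r$; since the integral defining $\Phi$ concentrates at $t\sim|k|^{-r/(r+1)}$ for large frequencies, only the low-frequency behaviour (absorbed by $u_0\in L^2$) is sensitive to the cut-off. Second, for $r>2$ the operator $(-\Delta_v)^{r/2}$ falls outside the range of Theorem \ref{thm:maxreg}; there I would iterate the gain, first obtaining $H_x^{2/3}$-regularity and then bootstrapping the velocity-to-position transfer in the spirit of the remark following Proposition \ref{prop:inhomogeneity}.
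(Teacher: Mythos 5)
Your proposal takes a genuinely different route from the paper, whose entire proof is two lines: the multiplication map $[\Phi_\mu u](t)=t^{1-\mu}u(t)$ is an isomorphism $L^2_\mu((0,T);Z)\to L^2((0,T);Z)$ for every Banach space $Z$, and Bouchut's theorem \cite[Theorem 2.1]{bouchut_hypoelliptic_2002} then transfers to the weighted setting. You never invoke Bouchut; instead you rebuild the transfer inside the paper's own machinery: view $u$ as the kinetic maximal-regularity solution of the order-$r$ fractional Kolmogorov equation with inhomogeneity $f=(\partial_t u+v\cdot\nabla_x u)+(-\Delta_v)^{r/2}u\in L^2_\mu(L^2)$, reduce everything to showing $u(0)\in X_r^{1/2-(1-\mu)}\cong\tr_{\mu,\mathrm{kin}}(X_r^0)$, and get that from a \emph{velocity-only} analogue of the lower bound in Lemma \ref{lem:esteta}. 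For $r\in(0,2]$ this is sound, and your key lower bound $\Phi(k,\xi)\gtrsim(|k|^{r/(r+1)}+|\xi|^r)^{2\mu-1}$ is correct for frequencies $\gtrsim T^{-1}$ (the substitution $s=|k|^{r/(r+1)}t$ in the regime $t|k|\gtrsim|\xi|$ produces exactly the exponent $\tfrac{r}{r+1}(2\mu-1)$, and the low-frequency region is indeed absorbed by $u_0\in L^2$, as you say; your other proposed fix via $T$-independence of the trace space does not address this, but the low-frequency absorption does). Where it applies, your route even yields more than the statement, namely $u\in L^2_\mu(X_r^1)$ and $u_0\in X_r^{1/2-(1-\mu)}$, close in spirit to the paper's closing paragraph; the price is length and, fatally for the full theorem, the restriction $r\le 2$.

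Two defects. The reparable one: both of your appeals to Theorem \ref{thm:uniqueweak} are circular as written, since that theorem's hypothesis is membership in $\T_\mu((0,T);X_r^s)\cap L^2_\mu((0,T);X_r^{s+1})$ --- for $u$ versus the maximal-regularity solution, and for $w$ versus the free evolution of $u_0$, this is precisely what you are trying to prove. The fix is to observe that the \emph{proof} of Theorem \ref{thm:uniqueweak} only uses membership in $\T_\mu((0,T);H^{-2}(\R^{2n}))\cap L^2_\mu((0,T);L^2(\R^{2n}))$, and all four functions lie in that class when $r\le2$ (for the free evolution of $u_0\in L^2$ one needs $|\xi|^r(1+|k|^2+|\xi|^2)^{-1}\le 1$, i.e.\ again $r\le2$); you must cite the argument, not the statement.

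The genuine gap is $r>2$. Your ``bootstrap'' is not an argument, and I do not see how to make it one. Iterating the $\beta=2$ theory cannot push past $H_x^{2/3}$: writing the equation with $-\Delta_v$ leaves the inhomogeneity $f=g+(-\Delta_v)u$ with $g=\partial_tu+v\cdot\nabla_xu$ merely in $L^2_\mu(L^2)$, and climbing the scale $X_2^s$, $s>0$, requires $x$-regularity of $f$, hence of $g$, which the hypotheses do not provide; likewise the trick in the remark following Proposition \ref{prop:inhomogeneity} needs $D_x^{1/3}f\in L^2$, which is unavailable. The whole point of the transfer theorem is that $g$ needs \emph{no} regularity. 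To rescue your approach for $r>2$ one would have to extend the paper's entire order-$\beta$ machinery (Lemma \ref{lem:upperbounde}, Proposition \ref{prop:inhomogeneity}, Lemma \ref{lem:esteta}, Theorems \ref{thm:chartraceL2} and \ref{thm:maxreg}) to $\beta=r>2$ --- plausible, since none of those proofs seems to use $\beta\le2$ essentially, but that is a re-derivation of the theory, not a bootstrap. The paper's reduction to Bouchut's theorem, which holds for all $r\ge0$ by a direct multiplier argument, avoids this entirely.
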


\begin{proof}
	For every Banach space $Z$ the mapping $\Phi_\mu \colon L^2_\mu((0,T);Z) \to L^2((0,T);Z) $, $[\Phi_\mu u] (t) = t^{1-\mu} u(t)$ defines an isomorphism. Hence, \cite[Theorem 2.1]{bouchut_hypoelliptic_2002} transfers immediately to the setting of temporal weights. 
\end{proof}

Finally, let us discuss whether the transfer of regularity from $v$ to $x$ can be seen at the level of the trace, too. Let $\beta \in (0,2]$ and $r \ge 0$. Suppose that $u \in \T_\mu((0,T);L^2(\R^{2n})) \cap L^2_\mu((0,T);H_v^r(\R^{2n}))$. This implies that $u \in L^2_\mu((0,T);H^{\frac{r}{r+1}}(\R^{2n}))$, by Theorem \ref{thm:advbouchut}, and thus $\Gamma u \in H^1_\mu((0,T);L^2(\R^{2n})) \cap L^2_\mu((0,T);H_x^{\frac{r}{r+1}}(\R^{2n}))$. By a standard interpolation argument and by \cite[Proposition 1.4.2]{amann_linear_1995} it follows that $\Gamma u \in C([0,T];H_x^{\frac{r/2-r(1-\mu)}{r+1}}(\R^{2n}))$. By Lemma \ref{lem:continH13x} and the equicontinuity lemma it follows that $u \in C([0,T];H_x^{\frac{r/2-r(1-\mu)}{r+1}}(\R^{2n}))$, whence $g = u(0) \in H_x^{\frac{r/2-r(1-\mu)}{r+1}}(\R^{2n}))$. This leads to the following observation. Regularity in $x$ of the solution is equivalent to regularity in $x$ of the initial value, whereas regularity in $v$ of the solution is equivalent to regularity in both $x$ and $v$ of the initial value. \\

\appendix

\bibliographystyle{amsplain}

\providecommand{\bysame}{\leavevmode\hbox to3em{\hrulefill}\thinspace}
\providecommand{\MR}{\relax\ifhmode\unskip\space\fi MR }
\providecommand{\MRhref}[2]{%
  \href{http://www.ams.org/mathscinet-getitem?mr=#1}{#2}
}
\providecommand{\href}[2]{#2}

\end{document}